\newtheorem{theorem}{Theorem}[section]
\newtheorem{lemma}[theorem]{Lemma}
\newtheorem{trditev}[theorem]{Proposition}
\newtheorem{posledica}[theorem]{Corollary}
\theoremstyle{definition}
\newtheorem{example}[theorem]{Example}
\newtheorem{question}[theorem]{Question}
\theoremstyle{remark}
\newtheorem{remark}[theorem]{Remark}
\numberwithin{equation}{section}
\newcommand{\C}{\mathbb{C}}
\def\dim{\mathop{\rm dim}\nolimits}
\def\codim{\mathop{\rm codim}\nolimits}
\def\rank{\mathop{\rm rank}\nolimits}
\def\Rea{\mathop{\rm Re}\nolimits}
\def\Ima{\mathop{\rm Im}\nolimits}
\def\Orb{\mathop{\rm Orb}\nolimits}
\def\Stab{\mathop{\rm Stab}\nolimits}
\def\Ker{\mathop{\rm Ker}\nolimits}
\newcommand{\doublewidetilde}[1]{{%
  \mathpalette\double@widetilde{#1}%
}}
\newcommand{\double@widetilde}[2]{%
  \sbox\z@{$\m@th#1\widetilde{#2}$}%
  \ht\z@=.9\ht\z@
  \widetilde{\box\z@}%
}
\renewcommand*\env@matrix[1][*\c@MaxMatrixCols c]{%
  \hskip -\arraycolsep
  \let\@ifnextchar\new@ifnextchar
  \array{#1}}
\begin{document}
\title[]
{
The stabilizers for the action of orthogonal similarity on symmetric matrices and orthogonal $*$-conjugacy on Hermitian matrices
}
\author{
Tadej Star\v{c}i\v{c}}
\address{Faculty of Education, University of Ljubljana, Kardeljeva Plo\v{s}\v{c}ad 16, 1000 Lju\-blja\-na, Slovenia}
\address{Institute of Mathematics, Physics and Mechanics, Jadranska
  19, 1000 Ljubljana, Slovenia}
\email{tadej.starcic@pef.uni-lj.si}
\subjclass[2000]{15A21,15A24,32V40,58K50}
\date{June 26, 2020}


\keywords{complex points, normal forms, complex orthogonal matrices, Toeplitz matrices\\
\indent Research supported by grant P1-0291 
from ARRS, Republic of Slovenia.}

\begin{abstract} 
We describe the recursive algorithmic procedure to compute the stabilizers of the group of complex orthogonal matrices with respect to the action of similarity on the set of all symmetric matrices. Futhermore, lower bounds for dimensions of the stabilizers for the action of orthogonal $*$-conjugation on Hermitian matrices are obtained. We also prove a result that completes the classification of normal forms of Hermitian matrices under orthogonal $*$-conjugation. A key step in our proof is to solve a certain block matrix equation with Toeplitz blocks. These results are then applied to provide a theorem on normal forms of the quadratic parts of flat complex points in a real codimension $2$ submanifold in a complex manifold.
\end{abstract}

\maketitle

\section{Introduction}

First we introduce the notation and recall a few basic properties for a smooth action $\Phi\colon G\times Y\to Y$ of a Lie group $G$ ($e$ is a unit) on a smooth manifold $Y$. It must satisfy the condition:
\[
\Phi(e,y)=y, \qquad \Phi(g,\Phi(h,y))=\Phi(g h,y),\qquad g,h\in G,\quad y\in Y.
\]
The following facts are then easily deduced (check \cite[Problem 12, Theorem 1]{OV}):
\begin{itemize}  
\item For any $g\in G$ the maps $\Phi^g \colon Y\to Y$, $ y \mapsto \Phi(g,y)$ and $R_g\colon G\to G$, $h\mapsto  hg$ are diffeomorphisms.
\item For any $y\in Y$ the orbit map $\Phi_y \colon G\to Y$, $ g \mapsto \Phi(g,y)$ is smooth and equivariant (for any $g\in G$ we have $\Phi_y(R_g (h))=\Phi^g(\Phi_y(h))$, $h\in G$). Moreover, $\Phi_y$ is of constant rank ($d_{g}\Phi_y=d_{y}\Phi^g\circ d_e\Phi_y\circ (d_{e}R_g)^{-1}$).
\item An \emph{orbit} of $y$, denoted by $\Orb_{\Phi}(y)=\{\Phi(g,y)\mid g\in G\}$,
is an immersed homogeneous submanifold of dimension equal to $\rank (\Phi_y)$. 
\item The \emph{stabilizer} 
\[
\Stab_{\Phi}(y)=\{g\in G\mid \Phi(g,y)=y\}=(\phi_y)^{-1}(y)
\]
is a closed Lie subgroup in $G$ of codimension equal to $\rank (\Phi_y)$ and with the tangent space  $T_e(\Stab_{\Phi}(y))=\Ker (d_e \phi_y)$. We have 
\[
\codim (\Stab_{\Phi}(y))=\dim (\Orb_{\Phi}(y))=\rank (\Phi_y).
\]
\item $\Phi_g(y)=y'$ if and only if $g\Stab_{\Phi}(y)g^{-1}=\Stab_{\Phi}(y')$. (Stabilizers of elements in the same orbit are isomorphic.)
\end{itemize}
%

The purpose of this paper is to give a better understanding of the stratification of certain classes of complex (real) square matrices with respect to certain actions of the complex (real) orthogonal group. An important information in this direction are dimensions of orbits. If the group acting are all invertible matrices, these can be obtained directly by computing tangent spaces of orbits. In the case of the similarity action the codimension of the tangent space is simply the dimension of the vector space of solutions of certain Sylvester's equation (see e.g. \cite{Arnold}), while the case of $*$-conjugation and $T$-conjugation is somewhat more involved (see \cite{TeranDopi1}, \cite{TeranDopi2}). However, when considering actions of the orthogonal group the calculation of tangent spaces might be very intrigueing. A natural way to obtain the dimension of an orbit is then to compute the stabilizer of an action; see Sec. \ref{cereq} for this approach. 
In any case it is essential to know the representatives of orbits (normal forms), therefore we shall restrict our attention to actions on classes of matrices, such that their corresponding normal forms have already been found.  

It is well known (by Sylvester's inertia theorem) that given a real symmetric matrix $A$ there exists a real orthogonal matrix $Q$ such that $Q^{T}AQ=\Lambda$, where $\Lambda=\oplus_{j=1}^{N}\big(\oplus_{k=1}^{m} \lambda_j\big)$, and $\lambda_1,\ldots,\lambda_N$ are pairwise distinct eigenvalues of $A$. (A square matrix $Q$ is orthogonal if and only if $Q^{T}Q=I$ (or $Q^{T}=Q^{-1}$).) It is easy to see that the stabilizer of $A$ (hence $\Lambda$) with respect to the action or real orthogonal similarity is then the set of matrices of the form $Q=\oplus_{j=1}^{N} Q_j$, where $Q_j$ is $m\times m$ real orthogonal matrix for any $j\in \{1,\ldots,N\}$ (see Lemma \ref{lemas}).

Our aim is to examine the complex case. By $\mathbb{C}^{n\times n}$ we denote the group of all $n\times n$ complex matrices, and by $\mathbb{C}^{n\times n}_S$, $\mathbb{C}^{n\times n}_H$, $O_n(\mathbb{C})$, respectively, its subgroups of symmetric, Hermitian and orthogonal matrices. The action of orthogonal similarity on symmetric matrices is defined as
\begin{align}\label{aos}
\Phi\colon O_n(\mathbb{C})\times \mathbb{C}^{n\times n}_S\to \mathbb{C}^{n\times n}_S,\qquad (Q,A)\mapsto Q^{-1}AQ.
\end{align}
Matrices $A$, $B$ are thus in the same orbit for the action of orthogonal similarity (i.e. orthogonally similar) precisely when there exists an orthogonal matrix $Q$ such that
\begin{equation}\label{eqAQBs}
A=Q^{-1}BQ.
\end{equation}
The notion of orthogonal similarity coincides with the concept of orthogonal $T$-congruence; recall that $A$ and
$\widetilde{A}$ (not necessarily symmetric) are $T$-congruent if and only
if there exists a non-singular (not necessarily orthogonal) matrix $Q$
such that $\widetilde{A}=Q^{T} A Q$.
Remember also that two symmetric matrices are similar precisely when they are orthogonally similar (see e.g. \cite{Gant}).

Given two square matrices $A$, $B$ of possibly different dimensions, the equation (\ref{eqAQBs}) with $Q$ orthogonal is equivalent to the system of matrix equations
\begin{equation}\label{eqAoXXB1}
AX=XB, \qquad X^{T}X=I.
\end{equation}
Note that the first equation of (\ref{eqAoXXB1}) is the classical Sylvester's equation; its solutions are presented in the next section.
The set of solutions of (\ref{eqAoXXB1}) for $A=B$ is precisely the stabilizer of $A$ with respect to orthogonal similarity. Futhermore, it suffices to consider only the case when $A$ is a normal form. 

Let us recall the symmetric canonical form (see e.g. \cite{HornJohn}).
Given a symmetric matrix $A$ with its Jordan canonical form:
\begin{equation}\label{JFs}
\mathcal{J}(A)=\bigoplus_j J_{\alpha_j}(\lambda_j), \qquad \lambda_j\in \mathbb{C}, 
\end{equation}
where the elementary $m\times m$ Jordan block is denoted by
\begin{equation*}
 J_m(z)=\begin{bmatrix}
                                                      z    &  1       & \;     & 0    \\
						      \;     & z     & \ddots & \;    \\     
						      \;     & \;      & \ddots &  1     \\
                                                      0     & \;      & \;     & z   
                                   \end{bmatrix},\qquad z\in \mathbb{C},
\end{equation*}
the symmetric normal form is
\begin{equation}\label{NF1s}
\mathcal{S}(A)=\bigoplus_{j=1}^{} S_{\alpha_j}(\lambda_j),\qquad
S_m(z)=
\frac{1}{2}\left(
\begin{bsmallmatrix}
2z  & 1 &             & 0 \\
1 & \ddots &   \ddots        &   \\
   & \ddots  &  \ddots & 1 \\
0   &          & 1    & 2z \\
\end{bsmallmatrix}
+
i
\begin{bsmallmatrix}
0  &                &    -1  & 0\\
 &   \iddots           &   \iddots     & 1 \\
-1   &  \iddots  & \iddots &  \\
0   & 1          &     & 0 \\
\end{bsmallmatrix}
\right).
\end{equation}
Moreover,
\begin{equation}\label{NFPs}
\mathcal{S}(A)=P\mathcal{J}(A)P^{-1},
\qquad
P=\bigoplus_j P_{\alpha_j}, \quad P_{\alpha_j}=\frac{1}{\sqrt{2}}(I_{\alpha_j}+iE_{\alpha_j}).
\end{equation}
Here  $I_{\alpha}$ the $\alpha\times \alpha_j$ identity-matrix and the backward $\alpha\times \alpha$ identity-matrix (with ones on the anti-diagonal) is
$E_{\alpha}=
\begin{bsmallmatrix}
 0                 &      & 1\\
            &   \iddots     &  \\
1            &           &  0\\
\end{bsmallmatrix}
$. See \cite{Djok} for a tridiagonal symmetric normal form.

Our first result is the following.

\begin{theorem}\label{stabs}
Let $\Phi$ be the action of orthogonal similarity on symmetric matrices (\ref{aos}). Suppose $A$ is a symmetric matrix and let  
$\mathcal{S}^{}(A)$ and $\mathcal{J}^{}(A)$ be of the forms (\ref{NF1s}) and (\ref{JFs}), respectively.
Let further 
$\mathcal{S}(A)=\bigoplus_{r=1}^{N}\mathcal{S}(A,\rho_r)$,
where all blocks of $\mathcal{S}(A)$ corresponding to the eigenvalue $\rho_r$ with respect to $\mathcal{J}(A)$ are collected together into $\mathcal{S}^{}_{}(A,\rho_r)$. Then $\dim_{\mathbb{C}}\Stab_{\Phi} (\mathcal{S}^{}(A))=\sum_{r=1}^{N}\dim_{\mathbb{C}}\Stab_{\Phi}\mathcal{S}^{}_{}(A,\rho_r)$. 
Moreover, if $\mathcal{S}_{}^{}(A)=\bigoplus_{r=1}^{N}\left(\bigoplus_{j=1}^{m_r} S_{\alpha_r}(\lambda)\right)$, then 
\[
\dim_{\mathbb{C}}\Stab_{\Phi} (\mathcal{S}^{}(A))=\sum_{r=1}^{N}\alpha_r m_r\big(\tfrac{1}{2}(m_r-1)+\sum_{s=1}^{r-1}m_s\big).
\]
\end{theorem}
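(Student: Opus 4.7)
The plan is to pass to the Lie algebra of $\Stab_\Phi(\mathcal{S}(A))$, rewriting the problem as computing the dimension of the skew-symmetric part of the commutant of $\mathcal{S}(A)$, then to decompose across eigenvalues, and finally for a single eigenvalue to transfer to the Jordan basis so that counting reduces to a $(-1)$-eigenspace count for a natural involution on the commutant of $\mathcal{J}(A)$.

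Since $\Stab_\Phi(\mathcal{S}(A))$ is a closed Lie subgroup of $O_n(\mathbb{C})$, its dimension equals that of its tangent space at $I$. Differentiating both $Q^T Q = I$ and $Q\mathcal{S}(A) = \mathcal{S}(A) Q$ at the identity gives
\[
\mathfrak{s} := T_I\Stab_\Phi(\mathcal{S}(A)) = \{X \in \mathbb{C}^{n\times n} : X^T = -X,\; X\mathcal{S}(A) = \mathcal{S}(A)X\}.
\]
For the first assertion, after reordering so that Jordan blocks for the same eigenvalue are consecutive, the direct sum $\mathcal{S}(A) = \bigoplus_r \mathcal{S}(A,\rho_r)$ forces every matrix commuting with $\mathcal{S}(A)$ to be block-diagonal in the eigenvalue blocks, because each generalized eigenspace is preserved. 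Since the transpose of a block-diagonal matrix is again block-diagonal with transposed blocks, the skew-symmetry condition decouples across the eigenvalue blocks as well, yielding $\dim\mathfrak{s} = \sum_r \dim \Stab_\Phi \mathcal{S}(A,\rho_r)$.

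For the formula I fix a single eigenvalue and, subtracting $\lambda I$, assume $\lambda = 0$. Setting $Y = P^{-1}XP$, a short calculation with $P_\alpha = \tfrac{1}{\sqrt{2}}(I + iE_\alpha)$ and $E_\alpha^T = E_\alpha$ gives $P_\alpha^T P_\alpha = iE_\alpha$, hence $P^T P = i\mathcal{E}$ with $\mathcal{E} = \bigoplus_j E_{\alpha_j}$. Skew-symmetry of $X$ then translates to $\mathcal{E}Y + Y^T\mathcal{E} = 0$, equivalently $\sigma(Y) = -Y$ for the map $\sigma(Y) := \mathcal{E} Y^T \mathcal{E}$. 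The intertwining $\mathcal{E}\mathcal{J} = \mathcal{J}^T \mathcal{E}$ (block-by-block from $E_n J_n(0) = J_n(0)^T E_n$) ensures that $\sigma$ preserves the commutant $\mathcal{C}$ of $\mathcal{J}$ and restricts to an involution there; consequently the target dimension equals $\dim\{Y \in \mathcal{C} : \sigma(Y) = -Y\}$.

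Finally I count this $(-1)$-eigenspace. The commutant $\mathcal{C}$ decomposes into blocks $Y_{IJ}$ of dimension $\min(\alpha_I, \alpha_J)$, upper-triangular Toeplitz when $\alpha_I = \alpha_J$ and of the rectangular padded-Toeplitz form $[\,0\;T\,]$ or $[\,T;\,0\,]$ otherwise. The crucial step is to show that $\sigma$ simply swaps the Toeplitz \emph{parameter} $T$ of $(I,J)$ with that of $(J,I)$: the equal-size case uses $E_\alpha Y^T E_\alpha = Y$ for upper-triangular Toeplitz $Y$, and in the unequal case one checks that the flanking $E$-matrices interchange the $[\,0\;T\,]$ and $[\,T;\,0\,]$ shapes while leaving $T$ unchanged. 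Given this swap property, $\sigma(Y) = -Y$ forces $Y_{II} = 0$ on each diagonal block and $Y_{JI} = -Y_{IJ}$ on each off-diagonal pair. Ordering the distinct sizes as $\alpha_1 > \alpha_2 > \cdots > \alpha_N$, each unordered pair within the class $\alpha_r$ contributes $\alpha_r$ (total $\binom{m_r}{2}\alpha_r$), and each unordered pair across classes with $s<r$ contributes $\min(\alpha_r,\alpha_s) = \alpha_r$ (total $m_r m_s \alpha_r$); regrouping by $r$ gives exactly $\sum_r \alpha_r m_r\bigl(\tfrac{1}{2}(m_r - 1) + \sum_{s<r} m_s\bigr)$. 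The main obstacle will be the unequal-size case of this swap property: a careful tracking of how two row/column reversals by $E$ convert between the two block shapes while returning the same Toeplitz parameter $T$.
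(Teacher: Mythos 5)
Your proof is correct, and it takes a genuinely different and more elementary route than the paper. You pass to the Lie algebra: since $\Stab_\Phi(\mathcal{S}(A))$ is a closed Lie subgroup of $O_n(\mathbb{C})$, its dimension is that of $T_I\Stab_\Phi(\mathcal{S}(A)) = \{X : X^T=-X,\ [X,\mathcal{S}(A)]=0\}$, and after conjugating by $P=\oplus_j \tfrac{1}{\sqrt2}(I+iE_{\alpha_j})$ (using $P^TP=i\mathcal{E}$) the skew-symmetry condition becomes $\sigma(Y)=-Y$ for the involution $\sigma(Y)=\mathcal{E}Y^T\mathcal{E}$ on the commutant of $\mathcal{J}(A)$. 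Because $\sigma$ sends the $(I,J)$-block parameter to the $(J,I)$-block parameter (the two row/column reversals by $E$ interchange the padded shapes $[0\ T]$ and $\begin{bsmallmatrix}T\\0\end{bsmallmatrix}$ while fixing the Toeplitz core, and $E_nT^TE_n=T$ for upper-triangular Toeplitz $T$), the $(-1)$-eigenspace is parametrized by choosing $Y_{IJ}$ freely for each unordered pair $I\neq J$ with the diagonal blocks forced to vanish, which yields $\sum_{I<J}\min(\alpha_I,\alpha_J)$ and regroups to the stated formula.

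The paper instead parametrizes the \emph{group}, not its Lie algebra: Proposition~\ref{posls} solves the commutation equation for $Q$, then the orthogonality $Q^TQ=I$ is rewritten (after conjugating by a permutation matrix $\Omega$ that regroups into block Toeplitz form) as the block-Toeplitz matrix equation $I=\mathcal{F}\mathcal{Y}^T\mathcal{F}\mathcal{Y}$, which Lemma~\ref{EqT} solves by a step-by-step recursive procedure tracking the dimension contribution of each block. Your linearization avoids Lemma~\ref{EqT} entirely and gives a shorter proof of the dimension formula; the paper's longer route is motivated by the companion result Theorem~\ref{stabz}, where the same recursive Lemma~\ref{EqT} is reused for the $*$-conjugation action on Hermitian matrices (involving complex conjugation and real dimensions, where the clean complex-linear involution picture no longer applies directly), and by the fact that it produces an explicit algorithmic parametrization of the stabilizer rather than just its dimension.
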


\begin{remark}
\begin{enumerate}
\item A recursive algorithmic procedure to compute $\mathcal{S}(A)$ in Theorem \ref{stabs} can be obtained. It will be provided as part of the proof of the theorem, more precisely the proof of Lemma \ref{EqT}. The lemma is essential for the proof of Theorem \ref{stabs}. Due to technical reasons this result is stated and proved in Sec. \ref{cereq}.
\item The normal forms are known for skew-symmetric and orthogonal matrices, too. However, the corresponding matrix equations describing the stabilizers of these forms eventually lead to equations which involve an important difference in comparison to the equation that we deal in this paper (Lemma \ref{EqT}). This problem will be addressed in the subsequent paper.
\end{enumerate}
\end{remark}

We proceed with the action of orthogonal $*$-congruence on Hermitian matrices:
%
\begin{equation}\label{azc}
\Psi\colon O_n(\mathbb{C})\times \mathbb{C}^{n\times n}_H\to \mathbb{C}^{n\times n}_H,\qquad (Q,A)\mapsto Q^{*}AQ.
\end{equation}

Hence matrices $A$ and $B$ are in the same orbit for the action of orthogonal $*$-congruence (i.e. are orthogonally $*$-congruent) if and only if there exists an orthogonal matrix $Q$ such that
\begin{equation}\label{eqAQB}
A=Q^{*}BQ.
\end{equation}
The notion of orthogonal $*$-congruence coincides with the concept of orthogonal consimilarity; recall that $A$ and
$\widetilde{A}$ (not necessarily Hermitian) are consimilar if and only
if there exists a non-singular (not necessarily orthogonal) matrix $Q$
such that $\widetilde{A}=Q^{-1}A\overline{Q}$.
Next, given two square matrices $A$ and $B$, the equation (\ref{eqAQB}) with $Q$ orthogonal is equivalent to the system of equations
\begin{equation}\label{eqAoXXB}
A\overline{X}=XB, \qquad X^{T}X=I.
\end{equation}
Solutions of (\ref{eqAoXXB}) for $A=B$ are precisely the stabilizer of $A$ with respect to the action of orthogonal $*$-conjugation. 
Observe that the first equation of (\ref{eqAoXXB}) is similar to Sylvester's equation. Moreover, to solve this equation it is important to choose the  appropriate normal form for consimilarity, 
which then enables one to reduce the equation to Sylvester's equation.

Given a matrix $A$ let $A\overline{A}$ be similar to its Jordan canonical form (see e.g.\cite{Haantjes}):
\small
\begin{align}\label{JF}
&\mathcal{J}(A\overline{A})=\left(\bigoplus_j J_{\alpha_j}(\lambda_j^2)\right)\oplus \left(\bigoplus_k (J_{\beta_k}(-\mu_k^2)\oplus J_{\beta_k}(-\mu_k^2)) \right)\oplus \left(\bigoplus_l (J_{\gamma_l}(\xi_l)\oplus J_{\gamma_l}(\overline{\xi}_l)) \right),
\end{align}
\normalsize
where $\lambda_j\in \mathbb{R}_{\ge 0} $, $\mu_k\in \mathbb{R}_{>0}$, $\xi_l^{2}\in \mathbb{C}\setminus \mathbb{R}$. It yields several normal forms which are consimilar to $A$: 
\begin{itemize}
\item (Hong \cite{Hong90})
\begin{equation}\label{CF00}
\mathcal{J}_{q}(A)=\left(\bigoplus_j J_{\alpha_j}(\lambda_j)\right)\oplus\left(
\bigoplus_k 
\begin{bsmallmatrix}
0  & J_{\beta_k}(\mu_k) \\
-J_{\beta_k}(\mu_k) &  0    
\end{bsmallmatrix}\right)\oplus\left( \bigoplus_l 
\begin{bsmallmatrix}
0   & J_{\gamma_l}(\xi_l)    \\
J_{\gamma_l}(\overline{\xi}_l) &  0  
\end{bsmallmatrix}\right)
\end{equation}
%
%
%
%
\item (Hong and Horn \cite[Theorem 3.1]{HongHorn})
\begin{equation}\label{CFHH}
\mathcal{J}_{q}'(A)=\left(\bigoplus_j J_{\alpha_j}(\lambda_j)\right)\oplus
\left(
\bigoplus_k 
\begin{bsmallmatrix}
0   & I_{\beta_l}    \\
J_{\beta_l}(-\mu_k^2) &  0  
\end{bsmallmatrix}
\right)
\oplus\left( \bigoplus_l 
\begin{bsmallmatrix}
0   & I_{\gamma_l}    \\
J_{\gamma_l}(\xi_l^{2}) &  0  
\end{bsmallmatrix}
\right)
\end{equation}
%
\item (Hong \cite[Lemma 2.1]{Hong90}) 
%
%
\begin{equation}\label{NF1}
\mathcal{H}^{1}(A)=\left(\bigoplus_{j}^{} H_{\alpha_j}(\lambda_j)\right)\oplus\left(\bigoplus_{k}^{} K_{\beta_k}(\mu_k)\right)\oplus\left( \bigoplus_{l}^{} L_{\gamma_l}(\xi_l)\right),
\end{equation}
%
%
%
where 
\begin{align}\label{HKLmz}
&H_m(z)=
\frac{1}{2}\left(
\begin{bsmallmatrix}
0  &                &    1  & 2z\\
 &   \iddots           &   \iddots     & 1 \\
1   &  \iddots  & \iddots &  \\
2z   & 1          &     & 0 \\
\end{bsmallmatrix}
+
i
\begin{bsmallmatrix}
0  & 1 &             & 0 \\
-1 & \ddots &   \ddots        &   \\
   & \ddots  &  \ddots & 1 \\
0   &          & -1    & 0 \\
\end{bsmallmatrix}
\right),\\
&K_{m}(z)=
\begin{bsmallmatrix}
0       & -iH_m(z)    \\
iH_m(z) &  0    \\
\end{bsmallmatrix}, 
L_{m}(z)=
\begin{bsmallmatrix}
0       & H_m(z)    \\
H^{*}_m(z) &  0    \\
\end{bsmallmatrix}.\nonumber
\end{align}
%
%
%
(Note that $\mathcal{H}^{1}(A)$ is a Hermitian.)
\end{itemize}
In any case $\Ima (\xi_l)>0$ may be assumed. Note also that the blocks corresponding to the eigenvalue $0$ are uniquely determined by the so-called alternating-product rank condition \cite[Theorem 4.1]{HornJohn}.
Moreover,
\begin{equation}\label{NFP}
\mathcal{H}^{1}(A)=P^{-1}\mathcal{J}_{q}(A)\overline{P},
\end{equation}
\begin{equation}\label{P}
P=\left(\bigoplus_j P_{\alpha_j}\right)\oplus\left( \bigoplus_k e^{\frac{i\pi}{4}}(P_{\beta_k}\oplus P_{\beta_k})\right)\oplus\left(\bigoplus_l (P_{\gamma_l}\oplus P_{\gamma_l})\right), \quad P_m=\frac{e^{{\scriptscriptstyle -\frac{i\pi}{4}}}}{\sqrt{2}}(I_m+iE_m).
\end{equation}
Note that normal forms under cosimilarity were first developed by Haantjes  \cite{Haantjes} and Asano and Nakayama \cite{AsanoNakaya}, but normal forms given above are better suited for our application.

When $A$ is Hermitian, then by the result of Hong \cite[Theorem 2.7]{Hong89} it is consimilar with a complex orthogonal matrix to
\begin{equation}\label{NF1}
\mathcal{H}^{\epsilon}(A)=\left(\bigoplus_{j}^{}\epsilon_j H_{\alpha_j}(\lambda_j)\right)\oplus\left(\bigoplus_{k}^{} K_{\beta_k}(\mu_k)\right)\oplus\left( \bigoplus_{l}^{} L_{\gamma_l}(\xi_l)\right),
\end{equation}
%
where $\epsilon=\{\epsilon_j\in\{1,-1\}\mid \lambda_j\neq 0\,\lor\,\alpha_j \textrm{ even }\}$ witn $\epsilon_j=1$ for $\lambda_j=0$, $\alpha_j$ odd. (Trivially, $H_{2n-1}(0)$ is orthogonally congruent to $-H_{2n-1}(0)$ (see \cite[Remark 4.5]{TS}).) 
By applying this result for $A=-iB$ with a skew-Hermitian matrix $B=-B^{*}$, we immediately obtain that the skew-Hermitian canonical form for orthogonal $*$-congruence for $B$ is   
$i\mathcal{H}^{\epsilon}(B)$ (see \cite[Corollary 2.8]{Hong89}).
Note that the classification of Hermitian matrices under orthogonal similarity was first treated by Hua \cite{Hua}.

Observe that in the real case (i.e. on real symmetric matrices) the concept of orthogonal $*$-congruence coincides with orthogonal similarity (and also orthogonal consimilarity or orthogonal $T$-congruence).

The next result describes dimensions of stabilizers with respect to the action of orthogonal $*$-conjugation on Hermitian matrices. It also 
answers the question concerning uniqueness of the normal form (\ref{NF1}). 

\begin{theorem}\label{stabz}
Let $\Psi$ be the action of orthogonal $*$-conjugation on Hermitian matrices (\ref{azc}). Suppose $A$ is a square matrix and let  
$\mathcal{H}^{\epsilon}(A)$ be of the form (\ref{NF1}).
Let further 
$\mathcal{H}_{}^{1}(A)=\bigoplus_{r=1}^{N}\mathcal{H}^{1}_{}(A,\rho_r)$,
where all blocks of $\mathcal{H}^{\epsilon}(A)$ corresponding to the eigenvalue $\rho_r$ with respect to $\mathcal{J}(A\overline{A})$ in (\ref{JF}) are collected together into $\mathcal{H}^{\epsilon}_{}(A,\rho_r)$. Then $\dim_{\mathbb{R}}\Stab_{\Psi} (\mathcal{H}^{\epsilon}(A))=\sum_{r=1}^{N}\dim_{\mathbb{R}}\Stab_{\Psi}\mathcal{H}^{\epsilon}_{}(A,\rho_r)$. 
Furthermore, the following holds:
\begin{enumerate}
\item \label{stabz0} If $\mathcal{H}_{}^{\epsilon}(A)=\bigoplus_{r=1}^{N}\left(\bigoplus_{j=1}^{m_r}\epsilon_j H_{\alpha_r}(0)\right)$, then 
\[
\dim_{\mathbb{R}}\Stab_{\Psi} (\mathcal{H}^{\epsilon}(A))=\tfrac{3}{2}\sum_{\alpha_r \textrm{ even}}m_r\alpha_r-\tfrac{1}{2}\sum_{\alpha_r \textrm{ odd}}m_r(\alpha_r+1)+2\sum_{r=1}^{N}\alpha_r m_r \big(m_r+\sum_{s=1}^{r-1}2m_s\big).
\]
%
\item \label{stabz1} If $\mathcal{H}_{}^{\epsilon}(A)=\bigoplus_{r=1}^{N}\left(\bigoplus_{j=1}^{m_r}\epsilon_j H_{\alpha_r}(\lambda)\right)$, $\lambda > 0$, then 
\[
\dim_{\mathbb{R}}\Stab_{\Psi} (\mathcal{H}^{\epsilon}(A))=\sum_{r=1}^{N}\alpha_r m_r\big(\tfrac{1}{2}(m_r-1)+\sum_{s=1}^{r-1}m_s\big).
\]
%
\item \label{stabz2} If $\mathcal{H}_{}^{1}(A)=\bigoplus_{r=1}^{N}\left(\bigoplus_{k=1}^{m_r} K_{\beta_r}(\mu)\right)$, $ \mu\in \mathbb{R}_{>0}$, $n_r=2m_r$, then
\begin{equation}\label{ineq2}
\dim_{\mathbb{R}}\Stab_{\Psi} (\mathcal{H}^{\epsilon}(A))\geq \sum_{r=1}^{N}n_r\big(4\beta_r n_r-\alpha_r-2n_r+8\beta_r \sum_{s=1}^{r-1}n_s\big).
\end{equation}
\item \label{stabz3} If $\mathcal{H}_{}^{1}(A)=\bigoplus_{r=1}^{N}\left(\bigoplus_{l=1}^{m_r} L_{\gamma_r}(\xi)\right)$, $ \xi^{2}\in \mathbb{C}\setminus  \mathbb{R}$, then 
\[
\dim_{\mathbb{C}}\Stab_{\Psi} (\mathcal{H}^{\epsilon}(A))=\sum_{r=1}^{N}\gamma_r m_r\big(\tfrac{1}{2}(m_r-1)+\sum_{s=1}^{r-1}m_s\big).
\]
\end{enumerate}

Moreover, the canonical form $\mathcal{H}^{\epsilon}(A)$ with respect to the action $\Psi$  is unique up to the order of the diagonal blocks.

\end{theorem}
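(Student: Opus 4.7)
The plan mirrors the strategy for Theorem \ref{stabs}: transfer the stabilizer condition into the system (\ref{eqAoXXB}) with $A=B=\mathcal{H}^{\epsilon}(A)$, decompose the problem block-by-block according to the spectrum of $A\overline{A}$, and then invoke Lemma \ref{EqT} to count dimensions of tangent spaces at the identity.

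First, applying complex conjugation to $A\overline{X}=XA$ gives $\overline{A}X=\overline{X}\,\overline{A}$, and combining the two yields $A\overline{A}\,X=X\,A\overline{A}$; hence every $X$ in the stabilizer commutes with $A\overline{A}$. By the classical description of the commutant and by (\ref{JF}), which groups the Jordan blocks of $A\overline{A}$ by eigenvalue, this forces $X$ to be block-diagonal with respect to the decomposition $\mathcal{H}^{\epsilon}(A)=\bigoplus_{r}\mathcal{H}^{\epsilon}(A,\rho_r)$. Once $X$ is block-diagonal both equations of (\ref{eqAoXXB}) decouple across blocks, giving the additivity statement and reducing the proof to the four cases (\ref{stabz0})--(\ref{stabz3}).

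Next I would conjugate via the matrix $P$ of (\ref{NFP})--(\ref{P}) to replace $\mathcal{H}^{\epsilon}(A)$ by $\mathcal{J}_{q}(A)$: the first equation of (\ref{eqAoXXB}) then becomes a genuine Sylvester equation with upper block-Toeplitz solutions, while $X^{T}X=I$ becomes an explicit, still-tractable orthogonality-type constraint. In cases (\ref{stabz1}) and (\ref{stabz3}), where all blocks share a common eigenvalue parameter (only the sizes $\alpha_r$ or $\gamma_r$ and multiplicities $m_r$ vary), the linearized constraint at the identity is precisely of the form treated by Lemma \ref{EqT}, and the stated formulas follow by the same rank counts as in Theorem \ref{stabs}. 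Case (\ref{stabz0}) proceeds along identical lines but requires tracking the parity of $\alpha_r$ together with the normalization $\epsilon_j=1$ for $\lambda_j=0$ with $\alpha_j$ odd; this produces the extra correction $\tfrac{3}{2}\sum_{\alpha_r\text{ even}}m_r\alpha_r-\tfrac{1}{2}\sum_{\alpha_r\text{ odd}}m_r(\alpha_r+1)$.

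The main obstacle is case (\ref{stabz2}): the blocks $K_{\beta}(\mu)$ are $2\times 2$ block matrices with $\pm iH_{\beta}(\mu)$ on the anti-diagonal, so after passing to $\mathcal{J}_{q}$ the matrix $X$ splits into four Toeplitz-type sub-blocks coupled by both the Sylvester equation and the orthogonality relation, and Lemma \ref{EqT} no longer applies in closed form. Here I would establish the lower bound (\ref{ineq2}) by exhibiting an explicit real-linear subfamily inside the stabilizer: starting from the general block-Toeplitz solution of the Sylvester part, I restrict to a subspace on which $X^{T}X=I$ linearizes, and count real dimensions. Finally, uniqueness of $\mathcal{H}^{\epsilon}(A)$ up to block reordering reduces to showing that the sign data $\epsilon_j$ on $H$-blocks with $\lambda_j\neq 0$ or $\alpha_j$ even is an orbit invariant: all sizes and eigenvalue parameters are already fixed by (\ref{JF}) as consimilarity invariants of $A\overline{A}$, and for the remaining sign freedom one detects inequivalent choices on each generalized eigenspace via a suitable inertia-like invariant of $\mathcal{H}^{\epsilon}(A)$ restricted to the uniquely determined invariant subspaces.
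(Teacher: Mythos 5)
Your overall architecture (show block‑diagonality by eigenvalue, conjugate to a Jordan‑type consimilarity normal form, and feed the resulting orthogonality constraint to Lemma \ref{EqT}) matches the paper's, and your way of establishing block‑diagonality — observing that $A\overline{X}=XA$ forces $X$ to commute with $A\overline{A}$, hence respects the spectral decomposition of $A\overline{A}$ — is a clean alternative to the paper's direct appeal to Lemma \ref{posl} via Lemma \ref{lemaBHH}(\ref{BHH1}). However, there are genuine gaps in the remaining steps.

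First, you misdiagnose case (\ref{stabz2}). You claim that for the $K_\beta(\mu)$ blocks ``Lemma \ref{EqT} no longer applies in closed form'' and propose instead to exhibit an ad hoc real‑linear subfamily inside the stabilizer. In fact the paper's Lemma \ref{EqT}(\ref{EqT1b}) is formulated precisely so that it does apply here: after the extra conjugations by $V$ and $S$ (which you omit), the orthogonality condition again reduces to $\mathcal{B}=\mathcal{F}\mathcal{Y}^{T}\mathcal{F}\mathcal{B}\mathcal{Y}$ with the structured matrices of (\ref{aass}), and counting the free parameters in that specific structured solution is what produces the lower bound (\ref{ineq2}). Your proposal to ``restrict to a subspace on which $X^{T}X=I$ linearizes'' gives no indication of how to reach the stated bound; it replaces the actual work with a placeholder.

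Second, your treatment of case (\ref{stabz0}) as ``identical lines plus parity tracking'' misses the structural point. When $\lambda=0$ the Toeplitz blocks produced by Lemma \ref{lemaBHH}(\ref{BHH2}) are complex‑alternating, not real, and this is what forces the use of Lemma \ref{EqT}(\ref{EqT2}) rather than (\ref{EqT1a}); the parity of $\alpha_r$ then enters through how the permutation $\Omega$ interacts with the alternating pattern inside $T_c$. The correction terms in the dimension formula are a consequence of this different Toeplitz algebra, not of bookkeeping the normalization $\epsilon_j=1$.

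Third, the conjugation step is understated. Passing to $\mathcal{J}_q(A)$ handles only the Sylvester‑type half of (\ref{eqAoXXB}); the real content is that $X^{T}X=I$, after conjugation and after absorbing the sign matrix $S_\epsilon$ from $T_\epsilon$, reduces exactly to (\ref{eqFYFIY}) with $\mathcal{B}=\oplus_r S_{\epsilon,r}$ and $\mathcal{B}'=\oplus_r S_{\widetilde\epsilon,r}$. Your ``still‑tractable orthogonality‑type constraint'' papers over this reduction, which is where both the dimension formulas and the uniqueness criterion come from. Relatedly, the uniqueness claim at the end is a sketch in the right direction, but the concrete invariant is not ``a suitable inertia‑like invariant'' to be found — it is precisely the inertia of $\oplus_{j}\epsilon_{r,j}$ in each even‑size block, and this drops out of Lemma \ref{EqT}(\ref{EqT2}), which states that solvability of (\ref{eqFYFIY}) forces $B_0^{r}$ and $G_0^{r}$ to have equal inertia whenever $\alpha_r$ is even.
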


\begin{remark}\label{rmz}
\begin{enumerate}
\item We use the same approach (a recursive algorithm) to prove both theorems, Theorem \ref{stabs} and Theorem \ref{stabz}. However, when proving Theorem \ref{stabz} some additional intrigueing problems arise. The only technical problem we left open is to improve the inequality (\ref{ineq2}), which is acctually very close to the equality. More precisely, what remains to be found is the dimension of the set of orthogonal matrices of the form $\begin{bsmallmatrix}
A & B\\
\overline{B} & \overline{A}
\end{bsmallmatrix}$, $A,B\in \mathbb{C}^{n\times n}$.
\item An analoguous theorem holds for skew-Hermitian matrices as well. The stabilizers for $\mathcal{H}^{\epsilon}(A)$ and $i\mathcal{H}^{\epsilon}(A)$ for orthogonal $*$-conjugation on Hermitian and skew-Hermitian matrices, respectively, clearly coincide. 
\item It is known for some time that $(\mathcal{H}^{\epsilon}(A),I)$ is the generic normal form for $(*,T)$-conjugation on pairs of one Hermitian and one symmetric matrix (check \cite[Proposition 3.1]{TS} for a related action). When considering only pairs of the form $(A,I)$, Therem \ref{stabz} provides aditional information on the stratification into orbits for this action (orthogonal $(*,T)$-conjugation).
\end{enumerate}
\end{remark}

Theorem \ref{stabz} is applied to give the result on uniqeness and the dimension of the orbit of a normal form of the quadratic part of a flat complex point of a real codimension $2$ submanifold in a complex manifold.

\begin{posledica}\label{posapp} Let a real $2n$-manifold $M$ embedded $\mathcal{C}^2$-smoothly in a complex
$(n+1)$-manifold $X$ locally near a flat isolated complex point $p\in M$ be seen as a graph:
\begin{equation}\label{BasForm1}
w=\overline{z}^TAz+\Rea (z^TBz)+o(|z|^2), \quad (w(p),z(p))=(0,0),
\end{equation}
where $(z,w)=(z_1,z_2,\ldots,z_n,w)$ are suitable local coordinates on
$X$, and $A$, $B$ are Hermitian and symmetric $n\times n$ complex matrices, respectively. If in addition $B$ is nonsingular then there exists a holomorphic change of coordinates so that (\ref{BasForm1}) transforms to 
\begin{equation}
\label{BasForm2}
\widetilde{w}=\overline{\widetilde{z}}^T\mathcal{H}^{\epsilon}(A) \widetilde{z}+\Rea \left(\widetilde{z}^T \widetilde{z}\right)+o(|\widetilde{z}|^2),
\end{equation}
where $\epsilon=(\epsilon_1,\ldots,\epsilon_n)$ is unique up to the sign. Moreover, assuming $B=I$, the set of matrices $A$ such that (\ref{BasForm1}) can be transformed to (\ref{BasForm2}) is an immersed submanifold in $\mathbb{C}^{n\times n}$, and the estimate on the codimension of its the orbit coincides with the estimate made for the stabilizer of $A$ for the action $\Psi$ in Theorem \ref{stabz}. 
\end{posledica}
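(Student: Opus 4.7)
My plan is to reduce the corollary to an application of Hong's normal form (\ref{NF1}) combined with Theorem \ref{stabz}, by tracking how the quadratic data $(A,B)$ of (\ref{BasForm1}) transforms under a linear holomorphic change of coordinates. The right-hand side of (\ref{BasForm1}) is real ($\bar z^{T}Az\in\R$ because $A$ is Hermitian), so $w$ is real on $M$; a brief check shows that nonlinear terms in any biholomorphism $(\tilde z,\tilde w)=F(z,w)$ only alter the $o(|z|^{2})$ remainder, so it suffices to consider $\tilde z = Qz$, $\tilde w = cw$ with $Q\in\mathrm{GL}_{n}(\C)$ and $c\in\R^{*}$. A direct substitution yields
\[
(A,B)\ \longmapsto\ \bigl(c (Q^{*})^{-1} A Q^{-1},\ c(Q^{T})^{-1} B Q^{-1}\bigr).
\]
Since $B$ is symmetric and nonsingular, a Takagi-type factorisation produces a pair $(Q,c)$ with $Q^{T}Q=cB$, which reduces $B$ to $I$.

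Once $B=I$, the coordinate changes still preserving this normalisation are precisely $Q=\lambda Q_{0}$ with $Q_{0}\in O_{n}(\C)$ and $\lambda^{2}=c$; they act on $A$ by $A\mapsto(\lambda/\bar\lambda)\,\bar Q_{0}\,A\,Q_{0}^{T}=\pm\Psi(Q_{0}^{T},A)$, i.e.\ as the action $\Psi$ up to a global sign flip $A\mapsto -A$ (coming from $c<0$). Hong's theorem puts $A$ in the form $\mathcal{H}^{\epsilon}(A)$, and the uniqueness clause of Theorem \ref{stabz} determines it up to the order of diagonal blocks. The residual flip carries $\mathcal{H}^{\epsilon}(A)$ to $-\mathcal{H}^{\epsilon}(A)$, which equals $\mathcal{H}^{-\epsilon}(A)$ up to reordering: on each $K_{\beta}(\mu)$ and $L_{\gamma}(\xi)$ block, the negation is absorbed by orthogonal $*$-conjugation with $\begin{bsmallmatrix}I&0\\0&-I\end{bsmallmatrix}$ (a short direct verification), while on $H_{2m-1}(0)$ the sign is absorbed by Hong's convention on $\epsilon$. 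This establishes existence of (\ref{BasForm2}) and uniqueness of $\epsilon$ up to an overall sign.

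For the submanifold statement with $B=I$: the set of $A\in\C^{n\times n}$ equivalent to a fixed $\mathcal{H}^{\epsilon}(A_{0})$ is the union of at most two $\Psi$-orbits (those of $\pm\mathcal{H}^{\epsilon}(A_{0})$), each an immersed submanifold by the constant-rank property of orbit maps recalled at the start of the introduction. Its codimension satisfies $\codim\Orb_{\Psi} = \dim(\mathrm{ambient}) - \dim_{\R}O_{n}(\C) + \dim_{\R}\Stab_{\Psi}$, so the codimension estimate coincides with the stabilizer estimate of Theorem \ref{stabz} up to an additive constant. I expect the main technical point to be the sign-flip analysis—checking rigorously that $A\mapsto-A$ acts trivially modulo $\Psi$ on $K_{\beta}$ and $L_{\gamma}$ blocks, and that no further symmetries beyond $\Psi$ and this flip arise from the coordinate changes. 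The remainder of the argument is bookkeeping on explicit linear substitutions and a direct invocation of Theorem \ref{stabz}.
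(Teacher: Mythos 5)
Your proposal is correct and follows essentially the same route as the paper: reduce to the linear coordinate change, track $(A,B)\mapsto(cP^{*}AP,\,\bar c^{-1}P^{T}BP)$ (your version with $Q=P^{-1}$ is equivalent), normalize $B=I$ by Autonne–Takagi, observe that the residual freedom is $O_{n}(\C)$ together with the sign flip $A\mapsto -A$, and then invoke the orthogonal $*$-congruences $-K_{m}\sim K_{m}$, $-L_{m}\sim L_{m}$, $-H_{2m-1}(0)\sim H_{2m-1}(0)$ to absorb the flip into $\epsilon\mapsto-\epsilon$, finishing with Theorem~\ref{stabz}. The only stylistic differences are that the paper derives $c\in\{\pm1\}$ from the requirement that $c^{-1}P^{*}AP$ stay Hermitian rather than positing $c\in\R^{*}$ at the outset, and cites Hong for the $-K_{m},-L_{m}$ congruence where you verify it directly; your bookkeeping for the codimension statement is a harmless expansion of the paper's "follows trivially".
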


The organization of the rest of the paper is the following. In Sec. \ref{notation} we further introduce the notation and prepare some preliminary material. The result on solutions of a certain matrix equation (Lemma \ref{EqT}) is stated and proved in Sec. \ref{cereq}. Finally, Theorem \ref{stabs}, Theorem \ref{stabz} and Corollary \ref{posapp} are proved in the last section.

\section{Preliminaries}\label{notation}

We introduce the so-called \emph{upper-triangular Toeplitz} matrix and \emph{complex-alternating upper-triangular Toeplitz} matrix, respectively:
\small
\begin{align*}
&T(a)=
\begin{bmatrix}
  a_{0} & a_{1} & a_{2} & \ldots  & \ldots & a_{\beta-1}  \\
0 & a_0    & a_{1} & \ddots  &        & \vdots \\
 \vdots & \ddots  & a_0 & \ddots  & \ddots & \vdots \\ 
 \vdots &  & \ddots & \ddots  & a_{1} & a_{2}\\
 \vdots &        &  & \ddots  & a_{0}  & a_{1} \\
0 & \ldots & \ldots & \ldots &  0  & a_0 \\
\end{bmatrix}, \, 
%
&T_c(a)=
\begin{bmatrix}
  a_{0} & a_{1}         & a_{2}             & \ldots   & \ldots &    a_{\beta-1}  \\
0       & \overline{a}_0 & \overline{a}_{1} & \overline{a}_2  &      & \vdots \\
 \vdots & \ddots            & a_0               & a_1  & \ddots &   \vdots \\ 
 \vdots &  & \ddots   & \ddots             & \ddots &  \vdots\\
 \vdots &        &              & \ddots  & \ddots &   \vdots \\
0       & \ldots            & \ldots & \ldots &  0      & \ddots
\end{bmatrix},
\end{align*}
\normalsize
with $ a=(a_0,a_1,\ldots,a_{\beta-1})\in \mathbb{C}^m$; here $T_c(a)=[t_{jk}]_{j,k=1}^{\beta}$ with $t_{jk}=\overline{t}_{(j+1)(k+1)}$ for entries on or above the diagonal. 
For example, elementary Jordan blocks are upper-diagonal Toeplitz matrices.
Sometimes it is more convenient to use block complex (complex-alternating) upper-diagonal Toeplitz matrices, respectively:
\small
\begin{align*}
&T(A)=\begin{bmatrix}
  A_{0} & A_{1}         & A_{2}             & \ldots   & \ldots &    A_{\beta-1}  \\
0       & A_0 & A_{1} & A_2  &      & \vdots \\
 \vdots & \ddots            & A_0               & A_1  & \ddots &   \vdots \\ 
 \vdots &  & \ddots   & \ddots             & \ddots &  \vdots\\
 \vdots &        &              & \ddots  & \ddots &   \vdots \\
0       & \ldots            & \ldots & \ldots &  0      & A_0
\end{bmatrix}, 
&T_c(A)=\begin{bmatrix}
  A_{0} & A_{1}         & A_{2}             & \ldots   & \ldots &    A_{m-1}  \\
0       & \overline{A}_0 & \overline{A}_{1} & \overline{A}_2  &      & \vdots \\
 \vdots & \ddots            & A_0               & A_1  & \ddots &   \vdots \\ 
 \vdots &  & \ddots   & \ddots             & \ddots &  \vdots\\
 \vdots &        &              & \ddots  & \ddots &   \vdots \\
0       & \ldots            & \ldots & \ldots &  0      & \ddots
\end{bmatrix},
%
\end{align*}
\normalsize
where $ A=(A_0,A_1,\ldots,A_{\beta-1})\in (\mathbb{C}^{m_r\times m_s})^{\beta}$; again $T_c(A)=[T_{jk}]_{j,k=1}^{\beta}$ with $T_{jk}=\overline{T}_{(j+1)(k+1)}$ on or above the diagonal.

We recall a few basic facts about upper (block) upper-diagonal (block) Toeplitz matrices, these are easy to prove, see e.g. \cite{HornJohn} or \cite{Gant}. 

\begin{lemma}\label{lemaTop}
\begin{enumerate}
\item \hspace{-2mm} Linear combinations and products of (block) upper-triangular (block) Toeplitz matrices are (block) upper-triangular (block) Toeplitz matrices. 
\item \label{lemaTop2} 
Any two upper triangular Toeplitz matrices of the same size commute. Furthermore, a matrix $B\in \mathbb{C}^{n\times n}$ commutes with $J_n(\lambda)$, $\lambda \in \mathbb{C}$ if and only if $B$ is an
upper triangular Toeplitz matrix.
\end{enumerate}
\end{lemma}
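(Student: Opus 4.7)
My plan is to reduce everything to the single observation that upper-triangular Toeplitz matrices are exactly polynomials in the nilpotent shift. Let $N = J_\beta(0)$ denote the $\beta \times \beta$ shift with $1$'s on the superdiagonal; then $N^k$ has $1$'s on the $k$-th superdiagonal and vanishes for $k \geq \beta$. The identity
\[
T(a) \;=\; \sum_{k=0}^{\beta-1} a_k N^k
\]
realizes the set of upper-triangular Toeplitz matrices as the commutative subalgebra $\mathbb{C}[N] \cong \mathbb{C}[x]/(x^\beta) \subset \mathbb{C}^{\beta \times \beta}$. For the block version I would use the Kronecker product form $T(A) = \sum_{k=0}^{\beta-1} N^k \otimes A_k$, regarded as an element of the (non-commutative) ring $\mathbb{C}[N] \otimes \mathbb{C}^{m \times m}$.

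From this representation, part (1) is immediate: closure under linear combination is obvious, and closure under multiplication follows from
\[
\Big(\sum_k a_k N^k\Big)\Big(\sum_\ell b_\ell N^\ell\Big) \;=\; \sum_{m=0}^{\beta-1}\Big(\sum_{k+\ell=m} a_k b_\ell\Big) N^m,
\]
using that $N^\beta = 0$ kills the higher-order terms; the block case is the same computation with $(N^k \otimes A_k)(N^\ell \otimes B_\ell) = N^{k+\ell} \otimes (A_k B_\ell)$. Commutativity in part (2) likewise falls out of the scalar case, since $\mathbb{C}[N]$ is commutative.

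The only step that requires a direct entry-wise argument is the converse in part (2). Writing $J_n(\lambda) = \lambda I + N$, the centralizer condition $BJ_n(\lambda) = J_n(\lambda)B$ is equivalent to $BN = NB$. Setting $B = [b_{ij}]$ and comparing entries (with the convention $b_{i,0} = b_{n+1,j} = 0$) gives
\[
(BN)_{ij} = b_{i,j-1}, \qquad (NB)_{ij} = b_{i+1,j},
\]
so that $b_{i+1,j} = b_{i,j-1}$ for all admissible $i,j$. The first relation says that $B$ is constant along each diagonal $j-i = \mathrm{const}$, and the boundary values at the bottom row and first column force $b_{ij} = 0$ whenever $i > j$. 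Hence $B$ is an upper-triangular Toeplitz matrix, completing the characterization.

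Since this is a well-known standard fact and each ingredient reduces either to polynomial algebra in $N$ or to one short index chase, I do not anticipate any genuine obstacle; the only care needed is keeping the block/scalar and Toeplitz/complex-alternating notation straight (the complex-alternating variant is not addressed by the lemma's second part, so no extra work is required there).
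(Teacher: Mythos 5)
Your proof is correct, and since the paper itself gives no argument for this lemma (it only cites Horn--Johnson and Gantmacher as standard references), your polynomial-in-the-nilpotent-shift representation together with the entry-wise index chase for the converse of part (2) is precisely the standard argument those references use. The reductions $T(a)=\sum a_k N^k$, $T(A)=\sum N^k\otimes A_k$ and the relation $b_{i+1,j}=b_{i,j-1}$ all check out, and your remark that the complex-alternating variant $T_c$ is not covered is accurate.
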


Based on Lemma \ref{lemaTop} (\ref{lemaTop2}) we have the following classical result on solutions of the famous Sylvester's equation, see e.g \cite[Chap. VIII]{Gant}.

\begin{lemma}\label{lemas}
Let $M$, $N$ be two matrices and suppose the matrix equation $MX=XN$.
\begin{enumerate}
\item If $M$ and $N $ are any of the matrices of the form  
$
J_m(\lambda)$
for $m=m_1$, $\lambda=\lambda_1$ and $m=m_2$, $\lambda=\lambda_2$, respectively, with $\lambda_1\neq \lambda_2$, it then follows that $X=0$.
%
\item If $M=J_m(\lambda)$ and $N=J_n(\lambda)$ for $\lambda\in \mathbb{C}$, then we have 
\begin{equation}\label{QT00}
X=\left\{\begin{array}{ll}
[0\quad T], & m<n \\
\begin{bmatrix}
T\\
0
\end{bmatrix}, & m>n\\
T, & n=m
\end{array}
\right.,
\end{equation}
where $T\in \mathbb{C}^{p\times p}$, $p=\min\{m,n\}$ is a complex upper-triangular Toeplitz matrix. 
\end{enumerate} 
\end{lemma}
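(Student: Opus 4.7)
The plan is to reduce both parts to the case of nilpotent shift matrices by subtracting a scalar multiple of the identity. For part (1), I would first invoke the standard identity $p(M)X = Xp(N)$, valid for every polynomial $p$, which follows by induction from $MX = XN$. Applied to $p(t) = (t-\lambda_2)^{m_2}$, the characteristic polynomial of $N = J_{m_2}(\lambda_2)$, it gives $p(N) = 0$ and hence $(M - \lambda_2 I)^{m_2} X = 0$. But $M - \lambda_2 I = J_{m_1}(\lambda_1 - \lambda_2)$ is upper triangular with all diagonal entries equal to $\lambda_1 - \lambda_2 \ne 0$, so it is invertible, and $X = 0$ follows.

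For part (2), I would shift to $\lambda = 0$ and analyze $J_m(0)X = XJ_n(0)$ entrywise. Since left multiplication by $J_m(0)$ shifts the rows of $X$ upward by one (zeroing the last row) and right multiplication by $J_n(0)$ shifts the columns rightward by one (zeroing the first column), the equation translates into the Toeplitz recursion $x_{i+1,j+1} = x_{i,j}$ together with the boundary conditions $x_{m,j} = 0$ for $j < n$ and $x_{i,1} = 0$ for $i > 1$. Writing $x_{i,j} = f(j - i)$ turns the recursion into a statement about a single function on $\{1-m, \ldots, n-1\}$, and the boundary conditions force $f$ to vanish on $\{1-m, \ldots, -1\} \cup \{1-m, \ldots, n-1-m\}$.

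A short case analysis on the sign of $m - n$ then finishes the proof. When $m = n$ the vanishing set is $\{1-n, \ldots, -1\}$, so $f$ is free on $\{0, 1, \ldots, n-1\}$ and $X$ is upper-triangular Toeplitz (which also follows directly from Lemma \ref{lemaTop}(\ref{lemaTop2})). When $m < n$ the set of free indices is the rightmost window $\{n - m, \ldots, n - 1\}$, forcing the first $n - m$ columns of $X$ to vanish and identifying the last $m$ columns with an upper-triangular Toeplitz block, so $X = [0 \ T]$. The case $m > n$ is symmetric: the free indices become $\{0, 1, \ldots, n-1\}$, the last $m - n$ rows vanish, and the top $n$ rows form the upper-triangular Toeplitz block $T$, yielding $X = \begin{bsmallmatrix} T \\ 0 \end{bsmallmatrix}$.

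There is no conceptual obstacle — this is classical material, treated for instance in \cite[Chap. VIII]{Gant} — and the only delicate point is keeping the index ranges straight in the rectangular cases. Pushing everything through the single parametrization $x_{i,j} = f(j-i)$ is the cleanest route and avoids duplicating the argument across the three cases.
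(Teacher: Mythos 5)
Your proof is correct, and it is essentially the standard argument the paper is gesturing at: part (1) via the polynomial identity $p(M)X = Xp(N)$ applied to the minimal polynomial of $N$, and part (2) via the entrywise shift analysis of $J_m(0)X = XJ_n(0)$. The paper itself provides no proof of this lemma — it states it as a classical fact, citing Gantmacher and deriving it from Lemma \ref{lemaTop}(\ref{lemaTop2}) (which handles the square case) — so your blind reconstruction, including the clean parametrization $x_{i,j} = f(j-i)$ and the three-way case split on $\operatorname{sign}(m-n)$, is exactly the expected textbook route. One minor bookkeeping slip: the vanishing set you write as $\{1-m,\ldots,-1\}\cup\{1-m,\ldots,n-1-m\}$ should collapse, when $m<n$, to $\{1-m,\ldots,n-1-m\}$ (the second interval absorbs the first), and when $m\geq n$ to $\{1-m,\ldots,-1\}$; you implicitly do this in the case analysis, but stating the union without simplification initially could be read as redundant. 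This does not affect correctness.
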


Since up to similarity the general setting of Sylvester's equation is easily reduced to the special case of elementary Jordan blocks considered in Lemma \ref{lemas}, the following proposition is then immediate.

\begin{trditev}\label{posls}
Suppose $A$ is a square matrix and let
\begin{equation}\label{eqH1}
(\mathcal{S}(A))X=X(\mathcal{S}(A)),
\end{equation}
where $\mathcal{S}(A)$ is of the form (\ref{NF1s}).
Let further $\mathcal{S}_{}^{}(A)=\bigoplus_{r=1}^{N}\mathcal{S}_{}(A,\rho_r)$,
where all blocks of $\mathcal{S}(A)$ corresponding to the eigenvalue $\rho_r$ with respect to $\mathcal{J}(A)$ are collected together into $\mathcal{S}^{}_{}(A,\rho_r)$. Then $X$ is of the form $X=\oplus_{r=1}^{N}X_{r}$ with $\mathcal{S}^{}_{}(A,\rho_r)X_{r}=X_{r}\mathcal{S}_{}(A,\rho_r)$. Moreover, if $\mathcal{S}_{}^{}(A)=\bigoplus_{j=1}^{N} S_{\alpha_j}(\lambda)$, $\lambda \in \mathbb{C}$, then $X=PYP^{-1}$, where $P=\oplus_{j=1}^{N}P_{\alpha_j}$, $P_{\alpha_j}=\frac{1}{\sqrt{2}}(I_{\alpha_j}+iE_{\alpha_j})$, and $Y=[Y_{jk}]_{j,k=1}^{N}$ is partitioned conformally to blocks as $\mathcal{H}_{}^1(A)$ with the block $Y_{jk}$ of the form (\ref{QT}) for $m=\alpha_j$, $n=\alpha_k$.
We have $X=[X_{jk}]_{j,k=1}^{N}$ with $X_{jk}=P_{\alpha_j}Y_{jk}P_{\alpha_k}^{-1}$ for $j,k\in \{1,\ldots,N\}$.
%
\end{trditev}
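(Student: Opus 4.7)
The plan is to reduce the matrix equation $\mathcal{S}(A)X = X\mathcal{S}(A)$ to the classical Sylvester equation for the Jordan canonical form by means of the similarity transformation from (\ref{NFPs}). Since $\mathcal{S}(A) = P\mathcal{J}(A)P^{-1}$ with $P$ block-diagonal of the shape $P = \bigoplus_j P_{\alpha_j}$, I would substitute $X = PYP^{-1}$; the equation then becomes the equivalent Sylvester equation $\mathcal{J}(A)Y = Y\mathcal{J}(A)$.

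Next, I would partition $Y = [Y_{jk}]$ conformally to the Jordan block decomposition of $\mathcal{J}(A)$ and apply Lemma \ref{lemas} block by block. For each pair $(j,k)$, the block $Y_{jk}$ satisfies $J_{\alpha_j}(\lambda_j) Y_{jk} = Y_{jk} J_{\alpha_k}(\lambda_k)$, so Lemma \ref{lemas}(1) forces $Y_{jk} = 0$ whenever $\lambda_j \neq \lambda_k$, and Lemma \ref{lemas}(2) gives the explicit truncated upper-triangular Toeplitz form (\ref{QT00}) when $\lambda_j = \lambda_k$. Grouping the indices by eigenvalue thus yields the direct sum $Y = \bigoplus_{r=1}^N Y_r$ where each $Y_r$ commutes with the Jordan part corresponding to $\rho_r$.

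Finally, since $P$ itself is block-diagonal aligned with the Jordan structure, conjugation by $P$ respects the direct sum decomposition: writing $P = \bigoplus_{r=1}^N P^{(r)}$ where $P^{(r)}$ collects the $P_{\alpha_j}$ corresponding to $\rho_r$, we get $X = PYP^{-1} = \bigoplus_{r=1}^N P^{(r)} Y_r (P^{(r)})^{-1} = \bigoplus_{r=1}^N X_r$ with $\mathcal{S}(A,\rho_r) X_r = X_r \mathcal{S}(A,\rho_r)$. In the single-eigenvalue case, direct block multiplication yields $X_{jk} = P_{\alpha_j} Y_{jk} P_{\alpha_k}^{-1}$ as claimed.

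I do not expect any serious obstacle here: the argument is essentially a change of basis combined with the classical Sylvester result of Lemma \ref{lemas}. The only mild point to verify is that the transition matrix $P$ is genuinely block-diagonal in a way that is compatible with both the eigenvalue grouping and the individual Jordan block sizes, which is immediate from the definition $P = \bigoplus_j P_{\alpha_j}$ in (\ref{NFPs}).
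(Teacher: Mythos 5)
Your argument is correct and is exactly the reduction the paper has in mind: the paper states that the proposition is ``immediate'' once the Sylvester equation is reduced to the elementary-Jordan-block case of Lemma~\ref{lemas}, and you have carried out that reduction explicitly via the similarity $\mathcal{S}(A)=P\mathcal{J}(A)P^{-1}$ from~(\ref{NFPs}), substituting $X=PYP^{-1}$, applying Lemma~\ref{lemas} blockwise, and conjugating back. The only minor point is that the reference~(\ref{QT}) in the statement is a slip for~(\ref{QT00}) (the complex upper-triangular Toeplitz form appropriate to ordinary similarity), which you correctly used, and likewise ``$\mathcal{H}^{1}(A)$'' there should read ``$\mathcal{S}(A)$''.
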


The following fact was first observed by Bevis, Hall and Hartwig \cite{BHH}. 

\begin{lemma}\label{lemaBHH}
Let $M$, $N$ be two matrices and suppose $M\overline{X}=XN$ is the matrix equation.
\begin{enumerate}
\item \label{BHH1} If $M$ and $N $ are any of the matrices of the form
\begin{equation*}
J_m(\lambda,1),\quad 
\begin{bsmallmatrix}
0 & I_n \\
J_n(\eta) & 0
\end{bsmallmatrix},
\qquad m,n\in \mathbb{N}, \lambda\in \mathbb{R}_{\geq 0}, \eta\in \mathbb{C}\setminus \mathbb{R}_{\geq 0}
\end{equation*}
for either $m=m_1$, $\lambda=\lambda_1$ or $n=n_1$, $\eta=\eta_1$, and either $m=m_2$, $\lambda=\lambda_2$ or $n=n_2$, $\eta=\eta_2$, respectively, with $\lambda_1\neq \lambda_2$, $\eta_1\neq\eta_2$, it then follows that $X=0$.
\item \label{BHH2} If $M=J_m(\lambda,1)$ and $N=J_n(\lambda,1)$ with $\lambda\in \mathbb{R}_{>0}$ (with $\lambda=0$), then we have 
\begin{equation}\label{QT}
X=\left\{\begin{array}{ll}
[0\quad T], & m<n \\
\begin{bmatrix}
T\\
0
\end{bmatrix}, & m>n\\
T, & n=m
\end{array}
\right.,
\end{equation}
where $T\in \mathbb{C}^{p\times p}$ for $p=\min\{m,n\}$ is a real (complex-alternating) upper-triangular Toeplitz matrix. 
\item \label{BHH3} If $M=
\begin{bsmallmatrix}
0 & I_m \\
J_m(\eta) & 0
\end{bsmallmatrix}
$
and
$N=
\begin{bsmallmatrix}
0 & I_n \\
J_n(\eta) & 0
\end{bsmallmatrix}
$, $ \eta\in \mathbb{C}\setminus \mathbb{R}_{\geq 0}$, then
\begin{equation}\label{QTC}
X=\begin{bmatrix}
T_1 & T_2\\
J_m(\eta)\overline{T}_2 & \overline{T}_1
\end{bmatrix}
,
\end{equation}
where $T_1$ and $T_2$ are possibly two different matrices of the form (\ref{QT}) with $T\in \mathbb{C}^{p\times p}$ for $p=\min\{m,n\}$ being any complex upper-triangular Toeplitz matrix. Moreover, if in addition $\eta\in \mathbb{C}\setminus\mathbb{R}$, then $T_2=0$.
\end{enumerate} 
\end{lemma}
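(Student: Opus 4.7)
The plan is to reduce the semilinear equation $M\overline{X}=XN$ to a classical Sylvester equation, apply Lemma \ref{lemas}, and then use the original equation as a ``reality constraint'' cutting down the resulting solution space.

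First, take complex conjugates to get $\overline{M}X=\overline{X}\overline{N}$. Right-multiplying $M\overline{X}=XN$ by $\overline{N}$ and substituting $\overline{X}\overline{N}=\overline{M}X$ gives the classical Sylvester-type identity
\[
(M\overline{M})X=X(N\overline{N}).
\]
Next I compute $M\overline{M}$ in each case: for $M=J_m(\lambda)$ with $\lambda\in\mathbb{R}_{\ge 0}$, $M\overline{M}=J_m(\lambda)^2$, whose only eigenvalue is $\lambda^2$ (and which, for $\lambda>0$, is similar to $J_m(\lambda^2)$ via an upper-triangular Toeplitz conjugation); for $M=\begin{bsmallmatrix}0 & I_m\\ J_m(\eta) & 0\end{bsmallmatrix}$, a direct block-multiplication yields $M\overline{M}=J_m(\overline{\eta})\oplus J_m(\eta)$, with spectrum $\{\eta,\overline{\eta}\}$.

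For part (\ref{BHH1}), the spectra $\{\lambda^2\}\subset\mathbb{R}_{\ge 0}$ and $\{\eta,\overline{\eta}\}\subset\mathbb{C}\setminus\mathbb{R}_{\ge 0}$ are disjoint in every cross-case, and also within each type when the parameters differ (distinct nonnegative $\lambda_j$ give distinct $\lambda_j^2$, and for distinct $\eta_j\in\mathbb{C}\setminus\mathbb{R}_{\ge 0}$ the sets $\{\eta_j,\overline{\eta}_j\}$ are disjoint). Lemma \ref{lemas}(1) applied to the Sylvester equation above then forces $X=0$. For part (\ref{BHH2}) with $\lambda>0$, Lemma \ref{lemas}(2) (together with the Toeplitz similarity identifying $J_m(\lambda)^2$ with $J_m(\lambda^2)$) shows $X$ has the shape (\ref{QT}). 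Since $X$ and $M$ are both upper-triangular Toeplitz they commute by Lemma \ref{lemaTop}(\ref{lemaTop2}), so the original equation becomes $M(\overline{X}-X)=0$; as $J_m(\lambda)$ is invertible for $\lambda>0$, one concludes $\overline{X}=X$, i.e. $T$ is real. For $\lambda=0$ the matrix $J_m(0)$ is nilpotent and this argument fails, so I would expand $M\overline{X}=XN$ entry-wise: the interior equations $\overline{x}_{j+1,k}=x_{j,k-1}$ translate on each diagonal into the alternating conjugation rule $x_{j+1,k+1}=\overline{x_{j,k}}$, exactly producing the complex-alternating Toeplitz form, while the boundary equations $x_{j,1}=0$ for $j>1$ and $x_{m,k}=0$ for $k<n$ carve out the rectangular support (\ref{QT}).

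For part (\ref{BHH3}), write $X=\begin{bsmallmatrix}X_{11} & X_{12}\\ X_{21} & X_{22}\end{bsmallmatrix}$ conformally with $M$. The Sylvester equation decouples into four independent equations; the diagonal blocks $X_{11},X_{22}$ satisfy $J_m(\eta)X_{11}=X_{11}J_n(\eta)$-type equations and hence take the Toeplitz shape (\ref{QT}) by Lemma \ref{lemas}(2), while the off-diagonal blocks $X_{12},X_{21}$ satisfy cross-equations whose matrices have eigenvalues $\eta$ and $\overline{\eta}$; these are distinct when $\eta\in\mathbb{C}\setminus\mathbb{R}$, forcing $X_{12}=X_{21}=0$ by Lemma \ref{lemas}(1). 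Plugging back into the original equation $M\overline{X}=XN$ and reading off the four block entries yields the coupling $X_{22}=\overline{X}_{11}$ and $X_{21}=J_m(\eta)\overline{X}_{12}$, which is exactly the form (\ref{QTC}); the final claim $T_2=0$ for $\eta\in\mathbb{C}\setminus\mathbb{R}$ is then the vanishing of $X_{12}$ noted above.

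The most delicate case, and the one I expect to require real care, is part (\ref{BHH3}) with $\eta\in\mathbb{R}_{<0}$: here $\eta=\overline{\eta}$, so $M\overline{M}=J_m(\eta)\oplus J_m(\eta)$ has a single eigenvalue with two equal Jordan blocks and Lemma \ref{lemas}(1) no longer annihilates $X_{12},X_{21}$. One must then verify directly from $M\overline{X}=XN$ that the coupling $X_{21}=J_m(\eta)\overline{X}_{12}$ exhausts all solutions and that $T_2$ is a genuinely free upper-triangular Toeplitz parameter, not constrained further by hidden interactions between the blocks.
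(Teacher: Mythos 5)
The paper does not prove Lemma \ref{lemaBHH}; it attributes the result to Bevis, Hall and Hartwig and uses it as cited, so there is no in-paper argument to compare against. Your plan---pass to the Sylvester equation $(M\overline{M})X=X(N\overline{N})$, invoke Lemma \ref{lemas}, and then impose the original semilinear equation as a reality or coupling constraint---is the standard reduction and carries the argument, and your treatment of part (\ref{BHH2}) with $\lambda=0$ (direct entry-wise) and of part (\ref{BHH3}) (block decoupling plus back-substitution) is correct.

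Two steps need repair. In part (\ref{BHH2}) with $\lambda>0$, the parenthetical claim that $J_m(\lambda)^2$ is similar to $J_m(\lambda^2)$ \emph{via an upper-triangular Toeplitz conjugation} is false: already for $m=2$, the equation $SJ_2(\lambda)^2=J_2(\lambda^2)S$ with $S=\begin{bsmallmatrix}s_0&s_1\\0&s_0\end{bsmallmatrix}$ forces $s_0(2\lambda-1)=0$, so no invertible Toeplitz $S$ exists for generic $\lambda$. Your conclusion---that the solutions of $J_m(\lambda)^2X=XJ_n(\lambda)^2$ are exactly the shape-(\ref{QT}) matrices with complex Toeplitz $T$---is still true, but should be argued by noting that such matrices satisfy $J_m(0)X=XJ_n(0)$ and hence $q(J_m(0))X=Xq(J_n(0))$ for $q(t)=(\lambda+t)^2$, while the Sylvester solution space has dimension $\min\{m,n\}$, so there are no others. (Cleaner still: write $X=A+iB$; the imaginary part of $J_m(\lambda)\overline{X}=XJ_n(\lambda)$ reads $(J_m(0)+2\lambda I)B=-BJ_n(0)$, which forces $B=0$ because $2\lambda\neq 0$, bypassing the square entirely.) Also, the ``$M$ and $X$ commute, then cancel $M$'' step is only literal for $m=n$; for rectangular $X$ one should compute $M\overline{X}$ and $XN$ as zero-padded matrices with Toeplitz blocks $J_p(\lambda)\overline{T}$ and $J_p(\lambda)T$, $p=\min\{m,n\}$, to conclude $\overline{T}=T$. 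Separately, in part (\ref{BHH1}) the assertion that distinct $\eta_1,\eta_2\in\mathbb{C}\setminus\mathbb{R}_{\geq 0}$ give disjoint sets $\{\eta_j,\overline{\eta}_j\}$ fails when $\eta_2=\overline{\eta}_1$ is non-real: already for $m=n=1$ the equation admits the nonzero solutions $X=\begin{bsmallmatrix}0&b\\\eta_1\overline{b}&0\end{bsmallmatrix}$, so the lemma's $X=0$ conclusion itself does not hold there. This is a latent imprecision in the statement (the paper's normal forms only ever compare $\eta$'s from a fixed half-plane), so it cannot be patched inside the proof, but you should not assert disjointness where it fails.
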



\begin{remark}
We use this lemma to solve the first equation of (\ref{eqAoXXB}). However, note that the solutions given by Lemma \ref{lemaBHH} (\ref{BHH3}) for $\eta\in \mathbb{C}\setminus \mathbb{R}$ are not suited well enough for our developments, hence in this case we shall use another normal form instead of $\begin{bsmallmatrix}
0 & I_m \\
J_m(\eta) & 0
\end{bsmallmatrix}$ (see the proof of Lemma \ref{posl}).
\end{remark}

\begin{lemma}\label{posl}
Suppose $A$ is a square matrix and let  
\begin{equation}\label{eqH1}
(\mathcal{H}^1(A))\overline{X}=X(\mathcal{H}^1(A)),
\end{equation}
where $\mathcal{H}^1(A)$ is of the form (\ref{NF1}).
Let further $\mathcal{H}_{}^{1}(A)=\bigoplus_{r=1}^{N}\mathcal{H}^{1}_{}(A,\rho_r)$,
where all blocks of $\mathcal{H}^1(A)$ corresponding to the eigenvalue $\rho_r$ with respect to $\mathcal{J}(A\overline{A})$ in (\ref{JF}) are collected together into $\mathcal{H}^{1}_{}(A,\rho_r)$. Then $X$ is of the form $X=\oplus_{r=1}^{N} X_{r}$ with $\mathcal{H}^{1}_{}(A,\rho_r)\overline{X}_{r}=X_{r}\mathcal{H}_{}^1(A,\rho_r)$. Moreover:
\begin{enumerate}
\item \label{posl1} If $\mathcal{H}_{}^{1}(A)=\bigoplus_{j=1}^{m} H_{\alpha_j}(\lambda)$, $\lambda \geq 0$ ($\lambda=0$), then $X=P^{-1}YP$, where $P=\oplus_{j=1}^{m}P_{\alpha_j}$, and $Y=[Y_{jk}]_{j,k=1}^{m}$ is partitioned conformally to blocks as $\mathcal{H}_{}^1(A)$ with the block $Y_{jk}$ of the form (\ref{QT}) for $m=\alpha_j$, $n=\alpha_k$ and a real (complex-alternating) upper-triangular Toeplitz matrix $T$.
We have $X=[X_{jk}]_{j,k=1}^{m}$ with $X_{jk}=P_{\alpha_j}^{-1}Y_{jk}P_{\alpha_k}$.
%
\item \label{posl2} If $\mathcal{H}_{}^{1}(A)=\bigoplus_{k=1}^{m} K_{\beta_k}(\mu)$, $ \mu >0$, then $X=P^{-1}V^{-1}SYS^{-1}VP$, where $P=\bigoplus_{k=1}^{m}
e^{\frac{i\pi}{4}}(P_{\beta_k}\oplus P_{\beta_k})
$,
$V=\bigoplus_{k=1}^{m}e^{i\frac{\pi}{4}}(W_{\beta_k}\oplus \overline{W_{\beta_k}})$ with $W_{\beta_k}= \oplus_{j=0}^{\beta_k-1} i^{j}$, and $S=\bigoplus_{k=1}^{m}
\begin{bsmallmatrix}
0 & U_{\beta_k}(i\mu) \\
J_{\beta_k}(-i\mu)\overline{U_{\beta_k}(i\mu) }& 0
\end{bsmallmatrix}$ with $U_{\beta_k}(i\mu)$ as a solution of the matrix equation $U_{\beta_k}(i\mu)J_{\beta_k}(-\mu^{2})=(J_{\beta_k}(i\mu))^{2}U_{\beta_k}(i\mu)$, and $=[Y_{jk}]_{j,k=1}^{m}$ is partitioned conformally to blocks as $\mathcal{H}_{}^1(A)$ and the block $Y_{jk}$ for any $j,k\in \{1,\ldots,m\}$ is of the form (\ref{QTC}) with $T_1$ and $T_2$ possibly two different matrices of the form (\ref{QT}) for a complex upper-triangular Toeplitz matrix $T$.
%
%
\item \label{posl3} If $\mathcal{H}_{}^{1}(A)=\bigoplus_{l=1}^{m} L_{\gamma_l}(\xi)$, $ \xi^{2}\in \mathbb{C}\setminus  \mathbb{R}$, then $X=P^{-1}YP$, $P_{}=\oplus_{j=1}^{m}
P_{\gamma_j}\oplus P_{\gamma_j}
$, and $Y=[Y_{jk}]_{j,k=1}^{m}$ partitioned conformally to blocks as $\mathcal{H}_{}^1(A,\rho_r)$, with $Y_{jk}$ of the form (\ref{QTC}) with $T_2=0$ and $T_1$ of the form (\ref{QT}) for a complex upper-triangular Toeplitz matrix $T$. 
\end{enumerate}
(Here we denoted $P_{\alpha}=\frac{e^{{\scriptscriptstyle -\frac{i\pi}{4}}}}{\sqrt{2}}(I_{\alpha}+iE_{\alpha})$.)
\end{lemma}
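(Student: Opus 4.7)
The plan is to reduce equation (\ref{eqH1}) to a consimilarity equation for Hong's canonical form $\mathcal{J}_{q}(A)$, and then to invoke Lemma \ref{lemaBHH} case by case. Substituting $X = P^{-1} Y P$ into (\ref{eqH1}) and using the intertwining relation $\mathcal{H}^{1}(A) = P^{-1} \mathcal{J}_{q}(A) \overline{P}$ from (\ref{NFP}), the factors $P^{-1}$ on the left and $\overline{P}$ on the right cancel from both sides, reducing (\ref{eqH1}) to
\[
\mathcal{J}_{q}(A)\,\overline{Y} = Y\,\mathcal{J}_{q}(A).
\]
By (\ref{CF00}) the blocks of $\mathcal{J}_{q}(A)$ are indexed by the eigenvalues $\lambda_{j}^{2}$, $-\mu_{k}^{2}$, $\xi_{l}^{2}$ of $A\overline{A}$, and Lemma \ref{lemaBHH}(\ref{BHH1}) forces every off-diagonal block of $Y$ between distinct eigenvalue classes to vanish; this yields the block-diagonal splitting $Y = \bigoplus_{r} Y_{r}$ and hence $X = \bigoplus_{r} X_{r}$.

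Cases (\ref{posl1}) and (\ref{posl3}) then reduce directly to Lemma \ref{lemaBHH}. In case (\ref{posl1}) the $\mathcal{J}_{q}$-block is $\bigoplus_{j} J_{\alpha_{j}}(\lambda)$ with $\lambda \geq 0$, so Lemma \ref{lemaBHH}(\ref{BHH2}) applied blockwise gives every $Y_{jk}$ of the form (\ref{QT}) with a real (resp.\ complex-alternating) upper-triangular Toeplitz generator when $\lambda > 0$ (resp.\ $\lambda = 0$), and unwinding the substitution produces $X_{jk} = P_{\alpha_{j}}^{-1} Y_{jk} P_{\alpha_{k}}$. In case (\ref{posl3}) the $\mathcal{J}_{q}$-block is $\bigoplus_{l} \begin{bsmallmatrix} 0 & J_{\gamma_{l}}(\xi) \\ J_{\gamma_{l}}(\overline{\xi}) & 0 \end{bsmallmatrix}$; partitioning $Y$ conformally into $2\times 2$ sub-blocks yields four scalar equations, two of which mix the eigenvalues $\xi$ and $\overline{\xi}$ and are therefore killed by Lemma \ref{lemaBHH}(\ref{BHH1}) (this is precisely $T_{2} = 0$ in (\ref{QTC})), while the remaining two decouple into ordinary Sylvester equations whose solutions are given by Lemma \ref{lemas}, producing the common upper-triangular Toeplitz block $T_{1}$ of shape (\ref{QT}).

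Case (\ref{posl2}) is the main obstacle. The relevant $\mathcal{J}_{q}$-block $M = \begin{bsmallmatrix} 0 & J_{\beta_{k}}(\mu) \\ -J_{\beta_{k}}(\mu) & 0 \end{bsmallmatrix}$ is not in the Hong--Horn shape required by Lemma \ref{lemaBHH}(\ref{BHH3}); the plan is to build an explicit ordinary similarity $S$, whose off-diagonal blocks involve the upper-triangular Toeplitz matrix $U_{\beta_{k}}(i\mu)$ defined by $U\,J(-\mu^{2}) = J(i\mu)^{2}\,U$ (such $U$ exists and is upper-triangular Toeplitz by Lemma \ref{lemaTop}, since $J(i\mu)^{2}$ and $J(-\mu^{2})$ are both single Jordan blocks at $-\mu^{2}$), that conjugates $M$ onto the Hong--Horn block $M' = \begin{bsmallmatrix} 0 & I \\ J_{\beta_{k}}(-\mu^{2}) & 0 \end{bsmallmatrix}$. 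Since an arbitrary complex similarity does not intertwine \emph{consimilarity} equations, the second step is to correct by the diagonal unitary twist $V = e^{i\pi/4}(W_{\beta_{k}} \oplus \overline{W_{\beta_{k}}})$ with $W_{\beta_{k}} = \bigoplus_{j=0}^{\beta_{k}-1} i^{j}$, whose powers of $i$ are engineered so that conjugation by $V^{-1} S$ intertwines $M\,\overline{Y} = Y\,M$ with the Hong--Horn consimilarity equation covered by Lemma \ref{lemaBHH}(\ref{BHH3}). That lemma then delivers $Y$ with blocks of shape (\ref{QTC}), and collecting the substitutions yields $X = P^{-1} V^{-1} S Y S^{-1} V P$. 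The hardest part will be verifying that $V$ is correctly tuned so that this intertwining actually holds; this reduces to a pair of direct Toeplitz identities involving $V$, $\overline{V}$ and the defining relation for $U_{\beta_{k}}(i\mu)$, both of which can be checked by explicit block multiplication.
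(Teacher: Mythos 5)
Your reduction of (\ref{eqH1}) to the consimilarity equation for Hong's form $\mathcal{J}_q(A)$, the splitting of $Y$ along eigenvalue blocks of $A\overline{A}$, and the treatment of cases (\ref{posl1}) and (\ref{posl2}) follow the paper's proof. For (\ref{posl2}) your final formula $X=P^{-1}V^{-1}SYS^{-1}VP$, with $U_{\beta_k}(i\mu)$ determined by $U\,J(-\mu^{2})=(J(i\mu))^{2}\,U$, is exactly what the paper constructs; be aware though that both $V=W'$ and the antidiagonal factor $T$ act on the normal-form block by \emph{consimilarity} (not ordinary similarity), and the paper applies $W'$ first and $T^{-1}$ second; since $Y=SXS^{-1}$ does intertwine consimilarity equations for $\widetilde{A}=SA\overline{S}^{-1}$, your motivation for introducing $V$ is off, even though the composition comes out right.

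Case (\ref{posl3}) has a genuine gap. Writing $Y_{kl}=\begin{bsmallmatrix} R_1 & R_2 \\ R_3 & R_4 \end{bsmallmatrix}$, the four equations produced by the $2\times 2$ block structure come in coupled pairs; for instance the pair for $(R_2,R_3)$ is $J(\xi)\overline{R}_3=R_2 J(\overline\xi)$ and $J(\overline\xi)\overline{R}_2=R_3 J(\xi)$. Neither is a single equation of the form $M\overline{X}=XN$, so Lemma \ref{lemaBHH}(\ref{BHH1}) cannot be invoked directly. The paper first eliminates one unknown to obtain the ordinary Sylvester equation $\overline{R}_2(J(\xi))^2=(J(\overline\xi))^2\overline{R}_2$, and only then uses $\xi^2\neq\overline\xi^2$ to get $R_2=R_3=0$. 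Likewise the pair for $(R_1,R_4)$ does not decouple: the paper subtracts the conjugated fourth equation from the third to get $-(R_1-\overline{R}_4)J(\xi)=J(\xi)(R_1-\overline{R}_4)$, then conjugates by the sign-alternating matrix $F=-1\oplus 1\oplus -1\oplus\cdots$ to turn this into a Sylvester equation between $J(\xi)$ and $J(-\xi)$, forcing $R_4=\overline{R}_1$; only afterwards does $R_1J(\xi)=J(\xi)R_1$ become the commutation equation giving the Toeplitz form. Your sketch must supply these eliminations; as written, the appeal to Lemma \ref{lemaBHH}(\ref{BHH1}) alone does not establish the claimed vanishing.
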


\begin{proof}
First observe how $A\overline{X}=XB$ transforms under consimilarity. For 
\[
\widetilde{A}=SA\overline{S}^{-1}, \quad\widetilde{B}=TB\overline{T}^{-1},\qquad  \det(S)\neq 0,\det(T)\neq 0
\]
we get
\begin{equation}\label{MQQNt}
\widetilde{A}\overline{Y}=Y\widetilde{B},\qquad (Y=SXT^{-1}).
\end{equation}
Therefore $X$ is a solution of $A\overline{X}=XB$ precisely when $Y=SXT^{-1}$ is a solution of $\widetilde{A}\overline{Y}=Y\widetilde{B}$.
%

Furthermore, suppose we are given block-diagonal matrices (normal forms):
\[
A=\oplus_{\mu} A_{\mu},\quad B=\oplus_{\nu} B_{\nu}, \qquad
\widetilde{A}=\oplus_{\mu}\widetilde{A}_{\mu},\quad 
\widetilde{B}=\oplus_{\nu} \widetilde{B}_{\nu}, \qquad 
S=\oplus S_{\mu}, \quad T=\oplus T_{\nu}
\]
for some nonsingular $S_{\mu}$ and $T_{\nu}$ (hence $\widetilde{A}_{\mu}=S_{\mu}A_{\mu}\overline{S}_{\mu}^{-1}$, $\widetilde{B}_{\nu}=T_{\nu}B_{\nu}\overline{T}_{\nu}^{-1}$).
If $X=[X_{\mu \nu}]_{\mu,\nu}$ and $Y=[X_{\mu \nu}]_{\mu,\nu}$ as a block-matrices are partitioned into blocks conformally to $A$, $B$ and $\widetilde{A}$, $\widetilde{B}$, then the first equation of (\ref{eqAoXXB}) and (\ref{MQQNt}) can be seen as systems of equations, respectively:
\begin{align}\label{MQQN}
&A_{\mu}\overline{X}_{\mu \nu}=X_{\mu \nu}B_{ \nu}, \qquad \overline{A}_{\mu}\overline{Y}_{\mu \nu}=Y_{\mu \nu}\overline{B}_{ \nu},\qquad (Y_{\mu\nu}=S_{\mu}X_{\mu \nu}T_{\nu}^{-1}).
\end{align}
%
%

In view of (\ref{MQQNt}) for $A=B=\mathcal{H}^{1}(A)$, $\widetilde{A}=\widetilde{B}=\mathcal{J}_q(A)$ and $S=T=P$ with $P$ of the form (\ref{P}) (see (\ref{NFP}), (\ref{P})), the equation (\ref{eqH1}) is equivalent to the equation
\begin{equation}\label{eqqY}
\mathcal{J}_q(A)\overline{Y}=Y\mathcal{J}_q(A), \qquad Y=PXP^{-1};
\end{equation}
here $\mathcal{J}_q(A)$ is the corresponding consimilarity normal form (\ref{CF00}).
If $Y$ is partitioned conformally to blocks as $\mathcal{H}_{}^1(A)$ and $\mathcal{J}_{}^1(A)$, then this equation splits into a system of matrix equations (see also (\ref{MQQN})).
Applying Lemma \ref{lemaBHH} (\ref{BHH1}) then immediately implies the first part of the lemma.

Next, we prove (\ref{posl1}). Assume $\mathcal{J}_{q}^{}(A)=\bigoplus_{j=1}^{m} J_{\alpha_j}(\lambda)$ and let $Y=[Y_{jk}]_{j,k=1}^{m}$ in (\ref{eqqY}) be partitioned conformally to blocks as $\mathcal{H}_{}^1(A)$ and $\mathcal{J}_{}^1(A)$ (see (\ref{MQQN})). Applying Lemma \ref{lemaBHH} (\ref{BHH2}) now implies that blocks $Y_{jk}$ are the form (\ref{QT}) and hence $X=[X_{jk}]_{j,k=1}^{m}$ with $X_{jk}=P_{\alpha_j}^{-1}Y_{jk}P_{\alpha_k}$. 

Proceed with (\ref{posl2}). 
We set $\mathcal{J}_{q}(A)=\bigoplus_{k=1}^{m} \begin{bsmallmatrix}
0 & J_{\beta_k}(\eta) \\
-J_{\beta_k}(\eta) & 0
\end{bsmallmatrix}$.
%
It is not difficult to see
\[
W_{\beta_k}'
\begin{bsmallmatrix}
0 & J_{\beta_k}(\eta) \\
-J_{\beta_k}(\eta) & 0
\end{bsmallmatrix}
=
\begin{bsmallmatrix}
0 & J_{\beta_k}(i\eta) \\
J_{\beta_k}(-i\eta) & 0
\end{bsmallmatrix}
\overline{W_{\beta_k}'}
, \quad W_{\beta_k}'=e^{i\frac{\pi}{4}}(W_{\beta_k}\oplus \overline{W}_{\beta_k}), \quad
W_{\beta_k}= \oplus_{j=0}^{\beta_k-1} i^{j}.
\]
Next, we find the antidiagonal block matrix $T_{\beta_k}(i\mu)=\begin{bsmallmatrix}
0 & U_{\beta_k}(i\mu) \\
U_{\beta_k}'(\mu) & 0
\end{bsmallmatrix}$ such that
\[
(T_{\beta_k}(i\mu))^{-1}\begin{bsmallmatrix}
0 & J_{\beta_k}(i\eta) \\
J_{\beta_k}(-i\eta) & 0
\end{bsmallmatrix}\overline{T}_{\beta_k}(i\mu)=\begin{bsmallmatrix}
0 & I_{\beta_k}\\
J_{\beta_k}(-\eta^{2}) & 0
\end{bsmallmatrix}.
\]
We have $U_{\beta_k}(i\mu)J_{\beta_k}(-\mu^{2})=J_{\beta_k}(i\mu)\overline{U'}_{\beta_k}(i\mu)$ and $U_{\beta_k}'(i\mu)=J_{\beta_k}(-i\mu)\overline{U}_{\beta_k}(i\mu)$, therefore $U_{\beta_k}(i\mu)$ is a solution of the equation $U_{\beta_k}(i\mu)J_{\beta_k}(-\mu^{2})=(J_{\beta_k}(i\mu))^{2}U_{\beta_k}(i\mu)$ and further $U_{\beta_k}'(i\mu)=J_{\beta_k}(-i\mu)\overline{U}_{\beta_k}(i\mu)$.
We set
\[
V=\bigoplus_{k=1}^{m}e^{i\frac{\pi}{4}}(W_{\beta_k}\oplus \overline{W_{\beta_k}}), \qquad 
S=\bigoplus_{k=1}^{m}
\begin{bsmallmatrix}
0 & U_{\beta_k}(i\mu) \\
J_{\beta_k}(-i\mu)\overline{U_{\beta_k}(i\mu)} & 0
\end{bsmallmatrix},
\]
thus for $\mathcal{J}_{q}'(A)=\bigoplus_{k=1}^{m}\begin{bsmallmatrix}
0 & I_{\beta_k} \\
J_{\beta_k}(-\eta^{2}) & 0
\end{bsmallmatrix}$ (in view of (\ref{CFHH})) we finally obtain
\[
\mathcal{J}_{q}'(A)\overline{Y'}=Y'\mathcal{J}_{q}'(A),\qquad Y'=S^{-1}VPXP^{-1}V^{-1}S,
\]
where $Y'=[Y'_{jk}]_{j,k=1}^{m}$ is partitioned conformally to blocks as $\mathcal{H}_{}^1(A)$, and $Y_{\beta_j\beta_k}'$ of the form (\ref{QTC}). This now implies (\ref{posl2}).

It is left to show (\ref{posl3}). We set $\mathcal{J}_{q}(A)=\bigoplus_{l=1}^{m} \begin{bsmallmatrix}
0 & J_{\gamma_l}(\xi) \\
J_{\gamma_l}(\overline{\xi}) & 0
\end{bsmallmatrix}$ and find the solutions of the equation (\ref{eqqY}). 
%
Seeing $Y=[Y_{kl}]_{k,l=1}^{m}$ conformally to blocks as $\mathcal{J}_{q}(A)$, we can write (\ref{eqqY}) blockwise; we find all block matrices $Y_{kl}(\xi)=\begin{bsmallmatrix}
R_1 & R_2 \\
R_3 & R_4
\end{bsmallmatrix}$ such that
\[
\begin{bsmallmatrix}
0 & J_{\gamma_k}(\xi) \\
J_{\gamma_k}(\overline{\xi}) & 0
\end{bsmallmatrix}\overline{R}_{kl}(\xi)
=
R_{kl}(\xi)\begin{bsmallmatrix}
0 & J_{\gamma_l}(\xi) \\
J_{\gamma_l}(\overline{\xi}) & 0
\end{bsmallmatrix}.
\]
We have 
\begin{align}\label{eqRJ}
R_{2}J_{\gamma_l}(\overline{\xi})=J_{\gamma_k}(\xi)\overline{R}_{3},\quad 
R_{3}J_{\gamma_l}(\xi)=J_{\gamma_k}(\overline{\xi})\overline{R}_{2},\\
R_{1}J_{\gamma_l}(\xi)=J_{\gamma_k}(\xi)\overline{R}_{4},\quad 
R_{4}J_{\gamma_l}(\overline{\xi})=J_{\gamma_k}(\overline{\xi})\overline{R}_{1}.\nonumber
\end{align}
Combining the first two equations we get $\overline{R}_{3}(J_{\gamma_l}(\overline{\xi}))^{2}=(J_{\gamma_k}(\xi))^{2}\overline{R}_{3}$ and $\overline{R}_{2}(J_{\gamma_l}(\xi))^{2}=(J_{\gamma_k}(\overline{\xi}))^{2}\overline{R}_{2}$, which implies $R_3=R_2=0$. Subtracting the third and the last conjugated equation of (\ref{eqRJ}) gives 
$-(R_{1}-\overline{R}_4)J_{\gamma_l}(\xi)=J_{\gamma_k}(\xi)(R_1-\overline{R}_{4})$. We deduce that 
$F(R_{1}-\overline{R}_4)J_{\gamma_l}(\xi)=J_{\gamma_k}(-\xi)F(R_1-\overline{R}_{4})$, $F=-1\oplus 1\oplus -1\oplus \cdots$, thus $R_4=\overline{R}_1$. The third (the fourth) equation of (\ref{eqRJ}) then yield that $R_1$
is an upper triangular complex Toeplitz matrix.
%
\end{proof}


Sometimes it is more convenient to deal with block Toeplitz matrices than with block matrices having Toeplitz blocks. This transformation of matrices can be achieved by $T$-conjugating with a suitable permutation matrix (see e.g. \cite{Lin}).

Suppose $Y=[Y_{rs}]_{r,s=1}^{N}$, where further $Y_{rs}$ is a $m_r\times m_s$ block matrix whoose blocks are of dimension $\alpha_r\times \alpha_s$ and of the form (\ref{QT})
for $m=\alpha_r$, $n=\alpha_s$, thus 
of the form 
\begin{equation}
\left\{
\begin{array}{ll}
[0\quad T], & \alpha_{r}<\alpha_{s}\\
\begin{bmatrix}
T\\
0
\end{bmatrix}, & \alpha_{r}>\alpha_{s}\\
T,& \alpha_{r}=\alpha_{s}
\end{array}\right.,
\end{equation}
and such that $T\in \mathbb{C}^{b_{rs}\times b_{rs}}$, $b_{rs}=\min\{\alpha_r, \alpha_s\}$ is any complex (complex-alternating) upper-triangular Toeplitz matrix.

Let $e_1,e_2,\ldots,e_{\alpha_r m_r}$ be the standard orthonormal basis in $\mathbb{C}^{\alpha_r m_r}$.
We set a permutation matrix formed by these vectors: 
\begin{equation}\label{perS}
\Omega_r=\left[e_1\;e_{\alpha_r+1}\;\ldots\;e_{(m_r-1)\alpha_r+1}\;e_2\;e_{\alpha_r+2}\;\ldots\;e_{(m_r-1)\alpha_r+2}\;\ldots\;e_{\alpha_r}\;e_{2\alpha_r}\;\ldots\;e_{\alpha_rm_r}\right].
\end{equation}
Observe that multiplicating with this matrix from the right puts the first, $(\alpha_r+1)$-th,\ldots,$((m_r-1)\alpha_r+1)$-th column together, and further the second, $(\alpha_r+2)$-th,\ldots,$((m_r-1)\alpha_r+2)$-th column together, and soforth. Similary, multiplicating with $\Omega_r^{T}$ from the left collects the first, $(\alpha_r+1)$-th,\ldots,$((m_r-1)\alpha_r+1)$-th row together, and further the second, $(\alpha_r+2)$-th,\ldots,$((m_r-1)\alpha_r+2)$-th row together, and soforth. 

We set $\Omega=\oplus_{r=1}^{N}\Omega_r$ and we obtain
\begin{align*}
\mathcal{Y}=\Omega^{T}Y\Omega=  &  (\oplus_{r=1}^{N}\Omega_r^{T})[Y_{rs}]_{r,s=1}^{N}(\oplus_{r=1}^{N}\Omega_r)=[\Omega_r^{T}Y_{rs}\Omega_s]_{r,s=1}^{N}.
\end{align*}
Now fix $r,s$ and let $b=\min\{\alpha_r,\alpha_s\}$. Denote 
\[
(Y_{rs})_{jk}=
\left\{
\begin{array}{ll}
[0\quad T_{jk}], & \alpha_{r}<\alpha_{s}\\
\begin{bmatrix}
T_{jk}\\
0
\end{bmatrix}, & \alpha_{r}>\alpha_{s}\\
T_{jk},& \alpha_{r}=\alpha_{s}
\end{array}\right., \qquad j\in \{1,\ldots m_r\},\quad k\in \{1,\ldots m_s\},
\]
where $T_{jk}=T(a_0^{jk},a_1^{jk},\ldots,a_{b-1}^{jk})$ (or $T_{jk}=T_c(a_0^{jk},a_1^{jk},\ldots,a_{b-1}^{jk})$).
Setting matrices $A_n=[a_n^{jk}]_{j,k=1}^{m_r,m_s}\in \mathbb{C}^{m_r\times m_r}$ for $n\in \{0,\ldots,b-1\}$ and $A=(A_0,\ldots,A_{b-1})$, we obtain that $\mathcal{Y}$ is a $N\times N$ block matrix, and its block $\mathcal{Y}_{rs}$ 
is an $\alpha_r\times \alpha_s$ block matrix of the form:
\begin{equation}\label{YTrs}
\mathcal{Y}_{rs}=\Omega_r^{T}Y_{rs}\Omega_s=
\left\{
\begin{array}{ll}
[0\quad \mathcal{T}], & \alpha_r<\alpha_s\\
\begin{bmatrix}
\mathcal{T}\\
0
\end{bmatrix}, & \alpha_r>\alpha_s\\
\mathcal{T},& \alpha_r=\alpha_s
\end{array}\right.,
\end{equation}
where 
$\mathcal{T}=T(A)$ (or $ \mathcal{T}=T_c(A)$) is a complex (complex-alternating) upper block Toeplitz matrix.
In particular, $T_c(A)=T(A)$ for real matrices $A_0,\ldots,A_{b-1}$.

\begin{example}
$N=2$, $\alpha_1=3$, $m_1=2$, $\alpha_2=2$, $m_2=3$:
\[
\Omega_3^{T}\begin{bmatrix}[cc|cc|cc]
a_1 & b_1 & a_2 & b_2 & a_3 & b_3 \\
0   & \overline{a}_1 & 0   & \overline{a}_2 & 0   & \overline{a}_3\\
0   & 0   & 0   & 0   & 0   &  0 \\
\hline
a_4 & b_4 & a_5 & b_5 & a_6 & b_6 \\
0   & \overline{a}_4 & 0   & \overline{a}_5 & 0   & \overline{a}_6\\
0   & 0   & 0   & 0   & 0   &  0 
\end{bmatrix}\Omega_2
=
\begin{bmatrix}[ccc|ccc]
a_1 & a_2 & a_3 & b_1 & b_2 & b_3 \\
a_4 & a_5 & a_6   & b_4 & b_5   & b_6\\
\hline
0   & 0   & 0   & \overline{a}_1   & \overline{a}_2   &  \overline{a}_3 \\
0 &  0 &   0 & \overline{a}_4 & \overline{a}_5 & \overline{a}_6 \\
\hline
0   & 0 & 0   & 0 & 0   & 0\\
0   & 0   & 0   & 0   & 0   &  0 
\end{bmatrix}.
\]
\end{example}

\section{Certain block matrix equation}\label{cereq}

\quad
%
In this section we consider certain block matrix equations such that blocks are upper-triangular block Toeplitz matrices. 

Let $\alpha_{1}>\alpha_{2}>\ldots >\alpha_{N}$ and suppose 
\begin{align}\label{BBF}
&\mathcal{B}=\bigoplus_{r=1}^{N}T\big(B_0^{r},B_1^{r},\ldots,B_{\alpha_r-1}^{r}\big),\, \mathcal{B}'=\bigoplus_{r=1}^{N}T\big(G_0^{r},G_1^{r},\ldots,G_{\alpha_r-1}^{r}\big),\, \mathcal{F}=\bigoplus_{r=1}^{N}E_{\alpha_r}(I_{m_r}),\\
&B_0^{r},G_0^{r},\ldots, B_{\alpha_r-1}^{r}, G_{\alpha_r-1}^{r}\in \mathbb{C}_S^{m_r\times m_r}, \quad B_0^{r},G_0^{r}\in GL_{m_r}(\mathbb{C})\nonumber
\end{align}
are quasi-diagonal matrices, and such that diagonal blocks of $\mathcal{B}$, $\mathcal{B}'$ are block upper-triangular Toeplitz matrices of different size and with nonsingular symmetric blocks. 
We denot the backward $\alpha\times \alpha$ block $m\times m$ identity-matrix (with identity on the anti-diagonal) by
$E_{\alpha}(I_{m})=
\begin{bsmallmatrix}
 0                 &      & I_{m}\\
            &   \iddots     &  \\
I_{m}            &           &  0\\
\end{bsmallmatrix}
$.
We shall solve a matrix equation
\begin{equation}\label{eqFYFIY}
\mathcal{B}'=\mathcal{F}\mathcal{Y}^{T}\mathcal{F}\mathcal{B} \mathcal{Y},
\end{equation}
where $\mathcal{Y}$ is partitioned conformaly to blocks as $\mathcal{F}$, $\mathcal{B}$, $\mathcal{B}'$, and such that 
\begin{equation}\label{0T0}
\mathcal{Y}=[\mathcal{Y}_{rs}]_{r,s=1}^{N},\quad \mathcal{Y}_{rs}=\left\{
\begin{array}{ll}
\begin{bmatrix}
0 &  \mathcal{T}_{rs}
\end{bmatrix}, & \alpha_r<\alpha_s\\
\begin{bmatrix}
\mathcal{T}_{rs}\\
0
\end{bmatrix}, & \alpha_r>\alpha_s\\
\mathcal{T}_{rs},& \alpha_r=\alpha_s
\end{array}\right., \quad r,s\in \{1,\ldots,N\},
\end{equation}
and for all $r,s\in \{1,\ldots,N\}$ either $\mathcal{T}_{rs}$ 
is an upper-tri\-angu\-lar block Toeplitz matrix 
or $\mathcal{B}$, $\mathcal{B}'$ are real and $\mathcal{T}_{rs}$ is a  complex-alternating block Toeplitz matrix; in both cases $\mathcal{T}_{rs}$ is a square block matrix of order $b_{rs}=\min\{\alpha_r, \alpha_s\}$ with blocks of dimension $m_r\times m_s$.

The following lemma is the key ingredient in the proof of Theorem \ref{stabs} and Theorem \ref{stabz}. 

\begin{lemma}\label{EqT}
Let $\mathcal{B}$, $\mathcal{B}'$ and $\mathcal{Y}$ be of the form as in (\ref{BBF}) and (\ref{0T0}), respectively.
If matrices $\mathcal{B}$, $\mathcal{B}'$ are given, then $\mathcal{Y}$ in the equation (\ref{eqFYFIY})
satisfies the following:
\begin{enumerate}[label={\bf (\Roman*)},ref={\Roman*},wide=0pt,itemsep=15pt]
\item \label{EqT1} If 
$\mathcal{T}_{rs}=T(A_0^{rs},A_1^{rs},\ldots,A_{b_{rs}-1}^{rs})$, 
$A_0^{rs},\ldots,A_{b_{rs}-1}^{rs}\in \mathbb{C}^{m_r\times m_s}$ for all $r,s\in \{1,\ldots,N\}$ in (\ref{0T0}), then the solution $\mathcal{Y}$ of the equation (\ref{eqFYFIY}) exists and it is 
an immersed complex submanifold in $\mathbb{C}^{q\times q}$ for $q=\sum_{r=1}^{N}\alpha_r m_r$ and of complex dimension
\begin{equation}\label{dimEQT1}
\sum_{r=1}^{N}\alpha_r m_r\big(\tfrac{1}{2}(m_r-1)+\sum_{s=1}^{r-1}m_s\big).
\end{equation}
Moreover, the following statements hold:
\begin{enumerate}
\item \label{EqT1a}
If $A_0^{rs},\ldots,A_{b_{rs}-1}^{rs}\in \mathbb{R}^{m_r\times m_s}$ for all $r,s\in \{1,\ldots,N\}$ and $\mathcal{B}$, $\mathcal{B}'$ are real, then
$\mathcal{Y}\cap \mathbb{R}^{q\times q}\neq \emptyset$ with $q=\sum_{r=1}^{N}\alpha_r m_r$ if and only if $B_0^{r},G_0^{r}$ in (\ref{BBF}) have the same inertia 
for all $r\in \{1,\ldots,N\}$. If any of the later two (hence both) conditions is fulfiled, then $\mathcal{Y}\cap \mathbb{R}^{q\times q}$ is an immersed real submanifold of real dimension (\ref{dimEQT1}).
%
\item \label{EqT1b}
When for any $r\in \{1,\ldots,N\}$, $n\in \{1,\ldots,b_{rs}-1\}$ we have $m_r=2m_r'$ and
\begin{align}\label{aass}
&B_0^{r}=u_0^{r}\big(-\mu^{2}I_{m_r}\oplus I_{m_r} \big),\quad
B_n^{r}=(-\mu^{2}u_n^{r}+u_{n-1}^{r})I_{m_r}   \oplus u_2^{r}I_{m_r},\quad u_0,\ldots,u_{b_{rs}-1}\in \mathbb{R},\\
&A_0^{rs}=
\begin{bsmallmatrix}
V_0^{rs} &  W_0^{rs} \\
-\mu^{2}\overline{W}_0^{rs} &  \overline{V}_0^{rs} 
\end{bsmallmatrix}, \qquad
A_n^{rs}=
\begin{bsmallmatrix}
V_n^{rs} &  W_n^{rs} \\
-\mu^{2}\overline{V}_n^{rs}+\overline{V}_{n-1}^{rs} &  \overline{W}_k^{rs} 
\end{bsmallmatrix}, \quad
V_n^{rs}, W_n^{rs}\in \mathbb{C}^{n_r\times n_r}, \mu >0,\nonumber
\end{align}
then $\mathcal{Y}\subset \mathbb{C}^{q\times q}$ contains an immersed real submanifold of real dimension
\[
\sum_{r=1}^{N}m_r'\big(4\alpha_r m_r'-\alpha_r-2m_r'+8\alpha_r \sum_{s=1}^{r-1}m_s'\big).
\]
\end{enumerate}
\item \label{EqT2} If 
$\mathcal{T}_{rs}=T_c(A_0^{rs},A_1^{rs},\ldots,A_{b_{rs}-1}^{rs})$, 
$A_0^{rs},\ldots,A_{b_{rs}-1}^{rs}\in \mathbb{C}^{m_r\times m_s}$ for all $r,s\in \{1,\ldots,N\}$ in (\ref{0T0}), then the solution $\mathcal{Y}$ of the equation (\ref{eqFYFIY}) exists precisely when $B_0^{r},G_0^{r}$ in (\ref{BBF}) have the same inertia for all $r\in \{1,\ldots,N\}$ such that $\alpha_r$ is even. If any of the later conditions (hence both) is fulfiled, then $\mathcal{Y}\subset \mathbb{C}^{q\times q}$ with $q=\sum_{r=1}^{N}\alpha_r m_r$ is an immersed real submanifold of dimension
\begin{align*}
\tfrac{3}{2}\sum_{\alpha_r \textrm{ even}}m_r\alpha_r-\tfrac{1}{2}\sum_{\alpha_r \textrm{ odd}}m_r(\alpha_r+1)+2\sum_{r=1}^{N}\alpha_r m_r \big(m_r+\sum_{s=1}^{r-1}2m_s\big)
\end{align*}
%
\end{enumerate}

\end{lemma}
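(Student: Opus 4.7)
The plan is to rewrite \eqref{eqFYFIY} as a congruence and solve it recursively, processing block columns in the order $r=1,\ldots,N$ and Toeplitz coefficients inside each column from the leading entry outward. Since every $B_j^r$ is symmetric and $\mathcal{F}^2=I$, the matrices $\widetilde{\mathcal{B}}:=\mathcal{F}\mathcal{B}$ and $\widetilde{\mathcal{B}}':=\mathcal{F}\mathcal{B}'$ are quasi-block-diagonal \emph{symmetric} and nonsingular (inside the $r$-th diagonal piece the $(j,k)$-block equals $B^r_{j+k-\alpha_r-1}$ for $j+k\geq\alpha_r+1$ and vanishes otherwise, so it is manifestly symmetric in $j,k$). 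Left-multiplying \eqref{eqFYFIY} by $\mathcal{F}$ converts the problem into the symmetric congruence
\[
\mathcal{Y}^T \widetilde{\mathcal{B}} \mathcal{Y} = \widetilde{\mathcal{B}}'.
\]

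For the recursion, fix $r$ and assume the blocks $\mathcal{T}_{st}$ with $s,t<r$ are already chosen. Writing out the $(s,r)$-block of the congruence for $s<r$ gives a Sylvester-type equation that is linear in the unknown $\mathcal{T}_{rs}$ with the nonsingular coefficient $B_0^s$ on one side, so all $\alpha_r m_r m_s$ complex entries of $\mathcal{T}_{rs}$ are free and contribute $\alpha_r m_r m_s$ to the dimension. After absorbing the contributions of every already-chosen $\mathcal{T}_{ts}$ into new data $\widetilde{B}_j^r,\widetilde{G}_j^r$, the remaining $(r,r)$-block equation reduces to a single-block congruence
\[
\sum_{i+p+q=k}A_i^T \widetilde{B}_p^r A_q=\widetilde{G}_k^r,\qquad k=0,\ldots,\alpha_r-1,
\]
in the Toeplitz coefficients $A_0,\ldots,A_{\alpha_r-1}$ of the diagonal block $\mathcal{T}_{rr}$. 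The $k=0$ equation $A_0^T \widetilde{B}_0^r A_0=\widetilde{G}_0^r$ is the classical congruence of nonsingular symmetric matrices, whose solution set is a coset of the complex orthogonal group of $\widetilde{B}_0^r$, hence nonempty over $\C$ with complex dimension $\binom{m_r}{2}$. For $k\geq 1$, once $A_0,\ldots,A_{k-1}$ are fixed the equation takes the shape $Z_k+Z_k^T=(\text{known symmetric})$ with $Z_k:=A_0^T \widetilde{B}_0^r A_k$, so it prescribes the symmetric part of $Z_k$ and leaves its skew part entirely free, contributing another $\binom{m_r}{2}$ complex parameters per index $k$. Summing over $k$ and $r$ reproduces precisely \eqref{dimEQT1}, and the immersed-submanifold property falls out of the implicit function theorem applied at every stage. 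Case I.a repeats the argument over $\R$, with Sylvester's inertia law providing the nonemptiness criterion at each $k=0$ step.

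For I.b and II the same skeleton applies, but the Toeplitz coefficients now carry an additional linear pattern (the $\mu^2$-coupling in I.b, the alternation $A\leftrightarrow\overline{A}$ in II) that must be propagated through the recursion. Each unknown $A_k$ then splits into invariant and anti-invariant parts under that extra pattern, and the skew-freedom at every step is cut down accordingly; in Case II the parity of $\alpha_r$ decides whether the $k=0$ congruence pairs $A_0$ with $B_0^r$ or with $\overline{B}_0^r$, so only even $\alpha_r$ impose the inertia condition on $(B_0^r,G_0^r)$, while for odd $\alpha_r$ a sign-flipping central coefficient absorbs any inertia mismatch, mirroring the $H_{2n-1}(0)\sim -H_{2n-1}(0)$ phenomenon. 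I expect the main obstacle to be Case I.b: its leading congruence has stabilizer equal to the group of complex orthogonal matrices of the form $\begin{bsmallmatrix}A&B\\ \overline{B}&\overline{A}\end{bsmallmatrix}$, whose dimension is unknown in general, and this is exactly why \eqref{ineq2} has to be stated as an inequality rather than an equality, as already flagged in Remark \ref{rmz}.
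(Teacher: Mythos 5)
Your proposal follows essentially the same route as the paper: reduce (\ref{eqFYFIY}) to the symmetric congruence $\mathcal{Y}^T(\mathcal{F}\mathcal{B})\mathcal{Y}=\mathcal{F}\mathcal{B}'$, determine the Toeplitz coefficients recursively, invoke Autonne--Takagi over $\C$ (resp.\ Sylvester's inertia theorem over $\R$, resp.\ the Hermitian version for even $\alpha_r$ in Case~II) at the leading coefficient $A_0^{rr}$, and observe that each higher coefficient $A_k^{rr}$ satisfies $Z_k+Z_k^T=\text{known}$, leaving exactly a skew-symmetric freedom; the parity-dependent inertia condition in~(II) and the identification of Case~I.b's leading congruence (stabilized by orthogonal matrices of the special $\begin{bsmallmatrix}A&B\\\overline{B}&\overline{A}\end{bsmallmatrix}$ form) as the bottleneck both match the paper's reasoning and the remark preceding Theorem~\ref{stabz}. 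One point is stated imprecisely: you say the $(s,r)$-block equation ($s<r$) is ``linear in the unknown $\mathcal{T}_{rs}$ with nonsingular coefficient $B_0^s$'' and from this infer that $\mathcal{T}_{rs}$ is \emph{free}; taken literally this is contradictory, and also the $(s,r)$-block of the congruence involves unknowns from several rows and columns, not just $\mathcal{T}_{rs}$. The paper avoids this by first fixing \emph{all} below-diagonal blocks arbitrarily (its STEP~a.a) and then sweeping the upper triangle diagonal-by-diagonal, coefficient-by-coefficient, a schedule under which each new quantity appears in the relevant equation with an invertible coefficient and all other terms are already known; your block-column order would need similar care to make the dependency structure explicit. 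The dimension count $\sum_r\alpha_r m_r(\tfrac{1}{2}(m_r-1)+\sum_{s<r}m_s)$ you obtain is nevertheless correct and matches (\ref{dimEQT1}).
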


\begin{proof}
We first observe a few simple facts. Since 
\[
(\mathcal{F}Y^{T}\mathcal{F}\mathcal{B} Y)^{T}=Y^{T}\mathcal{B}^{T}\mathcal{F}Y\mathcal{F} =\mathcal{F}\mathcal{F}Y^{T}\mathcal{F}(\mathcal{F}\mathcal{B}^{T}\mathcal{F})Y\mathcal{F}=\mathcal{F}(\mathcal{F}Y^{T}\mathcal{F}\mathcal{B}Y)\mathcal{F},
\]
it follows that for off-diagonal blocks ($r\neq s$) we have $(\mathcal{F}Y^{T}\mathcal{F}\mathcal{B}Y)_{rs}=0$ if and only if $(\mathcal{F}Y^{T}\mathcal{F}\mathcal{B}Y)_{sr}=0$. When comparing the left-hand side and the right-hand side of (\ref{eqFYFIY}) blockwise, it therefore suffices to observe only the blocks in the upper-triangular parts of $\mathcal{F}Y^{T}\mathcal{F}\mathcal{B} Y$ and $\mathcal{B}'$.  
Further, using Lemma \ref{lemaTop} (\ref{lemaTop2}) we deduce that  blocks of the right-hand side of the equation (\ref{eqFYFIY}) are again a block upper block triangular Toeplitz matrices. Hence it is sufficient to compare only the first rows of blocks in (\ref{eqFYFIY}).

For the sake of clarity we briefly sketch the algorithmic procedure (in several steps) how to solve the matrix equation (\ref{eqFYFIY}); details will be provided later on. Let us write its solution $\mathcal{Y}$ as a $N\times N$ block matrix whoose blocks are of the form (\ref{0T0}), 
where $\mathcal{T}_{rs}=T(A_0^{rs},A_1^{rs},\ldots,A_{b_{rs}-1}^{rs})$ (or $\mathcal{T}_{rs}=T_c(A_0^{rs},A_1^{rs},\ldots,A_{b_{rs}-1}^{rs})$) with $A_0^{rs},\ldots,A_{b_{rs}-1}^{rs}\in \mathbb{C}^{m_r\times m_s}$.
We outline the inductive 
procedure to obtain the matrix $\mathcal{Y}$.
The order of calculating the entries of $\mathcal{Y}$ is the following:  \\

\noindent
{\bf STEP a.a} (if $N\geq 2$) $A_j^{rs}$, $r>s$, $r,s\in \{1,\ldots,N\}$, $j\in \{0,\ldots,b_{rs}\}$.\\
{\bf STEP 0.0.} $A_0^{rr}$, $r\in \{1,\ldots,N\}$.\\
{\bf STEP 0.1.} $A_0^{r(r+1)}$, $r\in \{1,\ldots,N-1\}$.\\
{\bf STEP 0.2.} $A_0^{r(r+2)}$, $r\in \{1,\ldots,N-2\}$,\\
\ldots\\
{\bf STEP 0.p.} $A_0^{r(r+p)}$, $r\in \{1,\ldots,N-p\}$,\\
\ldots\\
{\bf STEP 0.N.} $A_0^{1,N}$,\\
{\bf STEP 1.0.} $A_1^{rr}$ for all $r\in \{1,\ldots,N\}$ such that $\alpha_r\geq 2$,\\
{\bf STEP 1.1.} $A_1^{r(r+1)}$ for all $r\in \{1,\ldots,N-1\}$ such that $\alpha_r\geq 2$,\\
{\bf STEP 1.2.} $A_1^{r(r+2)}$ for all $r\in \{1,\ldots,N-2\}$ such that $\alpha_r\geq 2$,\\
\ldots\\
{\bf STEP 1.p.} $A_1^{r(r+p)}$ for all $r\in \{1,\ldots,N-p\}$ such that $\alpha_r\geq 2$,\\
\ldots\\
{\bf STEP 1.N-1.} $A_1^{1N}$ if $\alpha_r\geq 2$,\\
\ldots\\
\ldots\\
{\bf STEP n.0.} $A_n^{rr}$ for all $r\in \{1,\ldots,N\}$ such that $\alpha_r\geq n+1$,\\
{\bf STEP n.1.} $A_n^{r(r+1)}$ for all $r\in \{1,\ldots,N-1\}$ such that $\alpha_r\geq n+1$,\\
{\bf STEP n.2.} $A_n^{r(r+2)}$ for all $r\in \{1,\ldots,N-2\}$ such that $\alpha_r\geq n+1$,\\
\ldots\\
{\bf STEP n.p.} $A_n^{r(r+p)}$ for all $r\in \{1,\ldots,N-p\}$ such that $\alpha_r\geq n+1$,\\
\ldots\\
{\bf STEP n.N-1.} $A_n^{1N}$ if $\alpha_r\geq n+1$,\\
\ldots\\
\ldots\\
{\bf STEP $\alpha_1-1$.0.} $A_{\alpha_1}^{11}$,

\quad
Observe that STEP a.a provides all blocks below the main diagonal of the block matrix $\mathcal{Y}=[\mathcal{Y}]_{r,s=1}^{N}$.
Next, we compute the diagonal entries of the main diagonal blocks of $\mathcal{Y}$ (see STEP 0.0). STEP 0.1 yields the diagonal entries of the first upper off-diagonal blocks of $\mathcal{Y}$. Further, STEP 0.2 gives the diagonal entries of the second upper off-diagonal blocks of $\mathcal{Y}$, and soforth. Alltogether, STEPS 0.0-0.N-1 provide the diagonal entries of the blocks in the upper triangular part of $\mathcal{Y}$. In the same fashion STEPS 1.0,1.1,1.2,\ldots,1.N-1 give the entries on the first upper off-diagonal of the blocks on the main diagonal, on the first upper off-diagonal, on the second upper off-diagonal, \ldots, on the (N-1)-th (the last) upper off-diagonal of $\mathcal{Y}$, respectively. Likewise, STEPS n.0-n.N-1 yield the entries on the n-th upper off-diagonal of each block of $[Y]_{rs}$, $s\geq r$. Finally, we  compute $A_{\alpha_1-1}^{11}$.
We shall show that the equation (\ref{eqFYFIY}) is solvable if and only if $A_0^{rr}$, $r\in \{1,\ldots,N\}$ in STEP 0.0. can be computed.

In the continuation we explain this process in detail.
We set $\mathcal{S}=\mathcal{B}\mathcal{Y}$ and $\widetilde{\mathcal{Y}}=\mathcal{F}Y^{T}\mathcal{F}$. 
The entries in the $j$-th column (and in the first row) of the block $(\widetilde{\mathcal{Y}}\mathcal{S})_{rs}$ 
are obtained by multiplying the first rows of the blocks $\widetilde{Y}_{r1},\ldots, \widetilde{Y}_{rN}$ with the $j$-th columns of the blocks $(\mathcal{S})_{1s},\ldots, (\mathcal{S})_{Ns}$, respectively, and then adding them:
\begin{equation}\label{YSrs1j}
((\widetilde{\mathcal{Y}}\mathcal{S})_{rs})_{1j}=\sum_{k=1}^{N}(\widetilde{Y}_{rk})_{(1)}(\mathcal{S}_{ks})^{(j)}, \qquad r,s\in \{1,\ldots,N\}, \quad j\in \{1,\ldots,\alpha_s\}.
\end{equation}
As mentioned above it suffices to analyse the upper-triangular blocks in the matrix equality (\ref{eqFYFIY}). Therefore assume $r\leq s$ with $r=s-p$,  $p\in \{0,1,2,\ldots,s-1\}$. We get
\begin{align*}
((\widetilde{Y}\mathcal{S})_{(s-p)s})_{1j}
=  \sum_{k=1}^{N}(\widetilde{Y}_{(s-p)k})_{(1)}(\mathcal{S}_{ks})^{(j)}, \qquad 1\leq j\leq \alpha_s,\quad 1\leq s \leq N
\end{align*}
When $N=1$ (hence $s=1$, $p=0$) we have
\begin{align}\label{f00}
((\widetilde{\mathcal{Y}}\mathcal{S})_{11})_{1j}
 =
 (\widetilde{Y}_{11})_{(1)}((\mathcal{S})_{11})^{(j)},
\end{align}
while for $N\geq 2$ (and $0\leq p\leq s-1$) we obtain 
\begin{align}\label{YSp1}
((\widetilde{\mathcal{Y}}\mathcal{S})_{1s})_{1j}= & (\widetilde{Y}_{11})_{(1)}(\mathcal{S}_{1s})^{(j)}+\sum_{k=2}^{N}(\widetilde{Y}_{1k})_{(1)}(\mathcal{S}_{ks})^{(j)}, \qquad (p=s-1)
\end{align}
\begin{align}\label{f1}
((\widetilde{\mathcal{Y}}\mathcal{S})_{(s-p)s})_{1j}= & (\widetilde{Y}_{(s-p)(s-p)})_{(1)}(\mathcal{S}_{(s-p)s})^{(j)}+\sum_{k=s-p+1}^{N}(\widetilde{Y}_{(s-p)k})_{(1)}(\mathcal{S}_{ks})^{(j)}\\
  &+\sum_{k=1}^{s-p-1}(\widetilde{Y}_{(s-p)k})_{(1)}(\mathcal{S}_{ks})^{(j)}, \qquad s\geq 2, s-N+1\leq p\leq s-2,  \nonumber\\
\label{fN}
((\widetilde{Y}\mathcal{S})_{NN})_{1j}
= & (\widetilde{Y}_{NN})_{(1)}(\mathcal{S}_{NN})^{(j)}+ \sum_{k=1}^{N-1}(\widetilde{Y}_{Nk})_{(1)}(\mathcal{S}_{kN})^{(j)}, \qquad (p=0,s=N).
\end{align}
We now split our consideration into two cases
\begin{enumerate}[wide=0pt,itemsep=15pt]
\item[{\bf Case \ref{EqT1}.}] Let $\mathcal{Y}$ be a block matrix and its blocks are complex upper-triangular Toeplitz matrices. 
Note that when $\mathcal{B},\mathcal{B}'$ are real we can use the same procedure to solve (\ref{eqFYFIY}) for $\mathcal{Y}$ real.

\quad
Since
\begin{align*}
&E_{a_{rs}}(I_{m_s})\big(T(A_0,A_1,\ldots,A_{b_{rs}-1})\big)^{T}E_{a_{rs}}(I_{m_r})=T(A_0^{T},A_1^{T},\ldots,A^{T}_{b_{rs}-1}),
\end{align*}
it follows that blocks of $[\widetilde{\mathcal{Y}}_{rs}]_{rs}=\widetilde{\mathcal{Y}}=\mathcal{F}\mathcal{Y}^{T}\mathcal{F}$ are of the form  
\[
\widetilde{\mathcal{Y}}_{rs}=
E_{\alpha_r}(I_{m_r})\mathcal{Y}_{rs}^{T} E_{\alpha_s}(I_{m_s})=
\left\{
\begin{array}{cc}
\begin{bsmallmatrix}
\widetilde{\mathcal{T}}_{rs}\\
0
\end{bsmallmatrix}, 
& \alpha_r >\alpha_s \\
\begin{bsmallmatrix}
0 & \widetilde{\mathcal{T}}_{rs}
\end{bsmallmatrix}, & \alpha_r<\alpha_s\\
\widetilde{\mathcal{T}}_{rs}, & \alpha_r=\alpha_s
\end{array}
\right., \quad
\widetilde{T}_{rs}=
T\big((A_0^{sr})^{T},(A_1^{sr})^{T},\ldots,(A_{b_{rs}}^{sr})^{T}\big)
\]
We then have 
\[
(\widetilde{Y}_{rk})_{(1)}=\left\{
\begin{array}{ll}
\begin{bmatrix}
(A_0^{kr})^{T} & (A_1^{kr})^{T} & \ldots & (A_{a_{kr}-1}^{kr})^{T} 
\end{bmatrix}, & \alpha_k\leq \alpha_r\\
\begin{bmatrix}
0_{m_r\times m_k(\alpha_k-\alpha_r)} & (A_0^{kr})^{T} & \ldots & (A_{\alpha_r-1}^{kr})^{T}
\end{bmatrix}, & \alpha_k > \alpha_r
\end{array}
\right..
\]

Denote
\begin{align*}
&\mathcal{S}_{rs}=
\left\{
\begin{array}{cc}
\begin{bmatrix}
S_{rs}\\
0
\end{bmatrix}, 
& \alpha_r >\alpha_s \\
\begin{bmatrix}
0 & S_{rs}
\end{bmatrix}, & \alpha_r<\alpha_s\\
S_{rs}, & \alpha_r=\alpha_s
\end{array}
\right., \quad
S_{rs}=T\big(C_0^{rs},C_1^{rs},\ldots, C_{b_{rs}-1}^{rs}\big),\quad C_0^{r},\ldots, C_{b_{rs}-1}^{r}\in \mathbb{C}^{m_r\times m_r},
\end{align*}
and observe for $1\leq j\leq \alpha_s$ that:
\begin{equation}\label{Sks}
(\mathcal{S}_{ks})^{(j)}=
\left\{
\begin{array}{ll}
\begin{bsmallmatrix}
C_{\alpha_k-1}^{ks} \\
\ldots \\
C_0^{ks} 
\end{bsmallmatrix}, & j=\alpha_s \geq \alpha_k\\
\begin{bsmallmatrix}
C_{j-1}^{ks} \\
\ldots \\
C_0^{ks}\\
0\\
\ldots\\
0
\end{bsmallmatrix}, &  \alpha_s< \alpha_k  \textrm{ or } j<\alpha_s= \alpha_k \\
\begin{bsmallmatrix}
C_{j-\alpha_s+\alpha_k-1}^{ks} \\
\ldots \\
C_{0}^{ks} \\
0\\
\ldots\\
0
\end{bsmallmatrix}, & \alpha_s>j>\alpha_s-\alpha_k\geq 0 \\
0, & \alpha_s- \alpha_k\geq j  
\end{array}
\right..
\end{equation}

\quad
We set
\begin{equation}\label{Prsk}
P^{rsk}_{n}=
\begin{bmatrix}
(A_0^{kr})^{T} & (A_1^{kr})^{T} & \ldots & (A_{n}^{rr})^{T} 
\end{bmatrix}
\begin{bmatrix}
C_{n}^{ks} \\
\ldots \\
C_{0}^{ks} \\
\end{bmatrix}=
\sum_{j=0}^{n}(A_j^{kr})^{T}C_{n-j}^{ks} .
\end{equation}
Since $C_{k}^{rs}=\sum_{j=0}^{k} B_{k-j}^{r}A_j^{rs}$ we compute
\begin{align*}
P^{rsk}_{n}=\sum_{j=0}^{n}(C_j^{kr})^{T}A_{n-j}^{ks} 
&=\sum_{j=0}^{n}\sum_{l=0}^{j}(A_l^{kr})^{T}( B_{j-l}^{r})^{T}A_{n-j}^{ks}
=\sum_{l=0}^{n}\sum_{j=l}^{n}(A_l^{kr})^{T}( B_{j-l}^{r})^{T}A_{n-j}^{ks}\\
&=\sum_{l=0}^{n}\sum_{j'=0}^{n-l}(A_{l}^{kr})^{T}( B_{j'}^{r})^{T}A_{n-l-j'}^{ks}
=\sum_{l=0}^{n}(A_l^{kr})^{T}C_{n-l}^{ks}=(P^{rsk}_{n})^{T} ,
\end{align*}
thus is follows that $P^{rsk}_{n}$ is symmetric.
Using (\ref{Sks}) we have
\begin{equation}\label{YSP}
(\widetilde{Y}_{rk})_{(1)}(\mathcal{S}_{ks})^{(j)}=
\left\{
\begin{array}{ll}
P^{rsk}_{j-\alpha_s+\alpha_r-1}, & \alpha_s \geq \alpha_k\geq \alpha_r,j>\alpha_s-\alpha_k\\
P^{rsk}_{j-1}, &  \alpha_r\geq  \alpha_k \geq \alpha_s\\
P^{rsk}_{j-\alpha_k+\alpha_r-1}, & \alpha_k\geq \alpha_s,\alpha_r \\
P^{rsk}_{j-\alpha_s+\alpha_k-1}, & \alpha_s,\alpha_r \geq \alpha_k, j>\alpha_s-\alpha_k \\
0, & \textrm{otherwise}
\end{array}
\right.,
\end{equation}
which is a symmetric matrix.

\quad
In STEP a.a we fix arbitrarily the blocks below the main diagonal of the block matrix 
$[\mathcal{Y}]_{r,s=1}^{N}$ (hence the blocks above the principal 
diagonal of the block matrix $[\mathcal{F}\mathcal{Y}^{T}\mathcal{F}]_{r,s=1}^{N}$). This adds $\sum_{r=1}^{N}\alpha_r \cdot m_r\sum_{s=1}^{r-1}m_s$ to complex (real) dimension of the solution; each block $[\mathcal{Y}]_{r,s}$, $r>s$ gives $(\alpha_s-1)m_r m_s$. \\

\noindent

\quad
Secondly, we compute the matrices in STEP 0.0. Since
\[
(\widetilde{Y}_{rk})_{(1)}=\left\{
\begin{array}{ll}
\begin{bmatrix}
(A_0^{rr})^{T} & *
\end{bmatrix}, & k=r\\
\begin{bmatrix}
0 & *
\end{bmatrix}, & k<r
\end{array}
\right.,\qquad
((\mathcal{S})_{kr})^{(1)}=
\left\{
\begin{array}{ll}
\begin{bmatrix}
B_0^{k}A_0^{kr} \\
0
\end{bmatrix}, & k\leq r\\
0, & k>r
\end{array}
\right..
\]
we then get from (\ref{YSrs1j}) for $r=s$, $j=1$ that
\begin{align*}
((\widetilde{Y}\mathcal{S})_{rr})_{11}=\sum_{k=1}^{N}(\widetilde{Y}_{rk})_{(1)}((\mathcal{S})_{kr})^{(1)}=(\widetilde{Y}_{rr})_{(1)}((\mathcal{S})_{rr})^{(1)}=(A_0^{rr})^{T}B_0^{r}A_0^{rr}, r\in \{1,\ldots,N\}.
\end{align*}
Therefore (\ref{eqFYFIY}) yields
\begin{equation}\label{GABA}
G_0^{r}=(A_0^{rr})^{T}B_0^{r}A_0^{rr}, \qquad r\in \{1,\ldots,N\}.
\end{equation}
By Autonne-Takagi factorization (see e.g. \cite[Corolarry
4.4.4]{HornJohn}), any nonsingular complex symmetric matrix is
$T$-congruent to the identity-matrix. Since $B_0^{r},G_0^{r}$ are symmetric, we have $B_0^{r}=H_{r}^{T}H_{r}$, $G_0^{r}=(H_{r}')^{T}H_{r}'$, hence (\ref{GABA}) yields 
\[
I=\big(H_{r}^{T}A_0^{rr}(H_{r}')^{-1}\big)^{T}\big(H_{r}^{T}A_0^{rr}(H_{r}')^{-1}\big).
\]
If $B_0^{r},G_0^{r}$ are real, 
then by Sylvester's theorem the equation (\ref{GABA}) has a real solution for $A_0^{rr}$ precisely when $B_0^{r},G_0^{r}$ are of the same inertia. In the complex case (real case) the complex (real) dimension of the (possible) solution is $\frac{m_r(m_r-1)}{2}$.

\quad
Next, let us compute $A_j^{r(r+p)}$ for all $N-1\geq p\geq 0$, $r\in \{1,\ldots,N-p\}$ such that $\alpha_r\geq j+1$ (STEP j.p.), while assuming that all matrices from the previous steps are already determined ($A_{j}^{rs}$ for $r>s$ and any $j$, $A_{j'}^{r(r+p')}$ for either $j'<j$ or possibly $j'=j$, $p'<p$ (if $p\geq 1$)). 
We use formulas (\ref{f00}), (\ref{YSp1}), (\ref{f1}), (\ref{fN}).

\quad
Observe that the second term in (\ref{YSp1}) and (\ref{f1}) for $j$ replaced with $j+1$ consist of summands $(\widetilde{Y}_{(s-p)k})_{(1)}(\mathcal{S}_{ks})^{(j+1)}$ with $k\geq s-p+1$. Here, $(\widetilde{Y}_{(s-p)k})_{(1)}$ for $k\geq s-p+1$ is the first row of a matrix above the main diagonal of $\widetilde{Y}$ (it was arranged in STEP a.a). Next, $(\mathcal{S}_{ks})^{(j+1)}$ for $k\geq s-p+1$ is the $j$-th column of a block in the $s$-th column of $\mathcal{S}$ (under $\mathcal{S}_{(s-p)s}$) and it (possibly) depends only on $A_{j}^{r(r+p')}$ for $0\leq p'<p$ (and $p\geq 1$) and $A_{j'}^{rs}$ for $r>s$, which have already been determined before STEP j.p; recall that $C_{k}^{rs}=\sum_{l=0}^{k} B_{k-l}^{r}A_l^{rs}$.

\quad
Further, the third term in (\ref{f1}) and the second term in (\ref{fN}) contain of summands which are products of matrices 
\[
(\widetilde{Y}_{(s-p)k})_{(1)}=
\begin{bsmallmatrix}
0& \ldots & 0 &(A_0^{k(s-p)})^{T}  & \ldots &(A_{b_{(s-p)}}^{k(s-p)})^{T} 
\end{bsmallmatrix},\quad
((\mathcal{S})_{ks})^{(j+1)}=
\begin{bsmallmatrix}
C_j^{ks} \\
\ldots\\
C_0^{ks}\\
0\\
\ldots\\
0
\end{bsmallmatrix}, \quad 1\leq k\leq s-p-1,
\]
so they (possibly) depend on $A_{j'}^{ks}$ for $j'<j$, $1\leq k\leq s-p-1$ (note $C_{j'}^{s}=\sum_{l=0}^{j'} B_{j'-l}^{r}A_l^{ks}$) and $A_{j'}^{k(s-p)}$ for $j'<j$, $1\leq k\leq s-p-1$. Again, these matrices were already determined in the previous steps.

\quad
Finally the first term of (\ref{f00}), (\ref{YSp1}), (\ref{f1}), (\ref{fN}) is a matrix product of ($r=s-p$):
\begin{align*}
&(\widetilde{Y}_{rr})_{(1)}=
\begin{bsmallmatrix}
(A_0^{rr})^{T} &  \ldots & (A_{\alpha_{r}-1}^{rr})^{T}
\end{bsmallmatrix},\\ 
&(\mathcal{S}_{rr})^{(\alpha_r-1)}=
\begin{bsmallmatrix}
C_{\alpha_r-1}^{rr} \\
\ldots\\
C_0^{rr}
\end{bsmallmatrix},\quad
(\mathcal{S}_{r(r+p)})^{(j+1)}=
\begin{bsmallmatrix}
C_j^{r(r+p)} \\
\ldots\\
C_0^{r(r+p)}\\
0\\
\ldots\\
0
\end{bsmallmatrix},\quad  j< \alpha_s-1 \textrm{ or } p\geq 1.
\end{align*}
Therefore
\begin{align*}
&(\widetilde{Y}_{rr})_{(1)}(\mathcal{S})_{rr}^{(j+1)}=(A_0^{rr})^{T}B_0^{r}A_j^{rr}+(A_j^{rr})^{T}B_0^{r}A_0^{rr}+\Xi(j,r,0),\\
&(\widetilde{Y}_{rr})_{(1)}(\mathcal{S})_{r(r+p)}^{(j+1)}=(A_0^{rr})^{T}B_0^{r}A_j^{r(r+p)}+\Xi(j,r,p),\qquad p\geq 1,
\end{align*}
where $\Xi(j,r,p)$ is symmetric (since $P_j^{rks}$ in (\ref{YSP}) is symmetric) and for $p\geq 1$ and $p=0$ depends only on $A_{j'}^{r(r+p)}$ with $j'<j$ and $A_{j'}^{rr}$ with $j'< j$, respectively. Remember that $C_j^{rs}=\sum_{l=0}B_{j-l}^{r}A_l^{rs}$.

\quad
By comparing the entries in the first row and $(j+1)$-th column of the blocks in the $s-p$-th row and $s$-th column of the matrices in left-hand side and right-hand side of the equation (\ref{eqFYFIY}) we obtain (recall also (\ref{f00}), (\ref{YSp1}), (\ref{f1}), (\ref{fN})):
\[
(A_0^{rr})^{T}A_j^{r(s+p)}=\Xi '(j,r,p), \qquad p\geq 1
\]
\[
(A_0^{rr})^{T}A_j^{rr}+(A_j^{rr})^{T}A_0^{rr}=\Xi '(j,r,0), \qquad p=0\quad  (s=r),
\]
where $\Xi'(j,s,p)$ is symmetric (see (\ref{YSP})) and depends only on $A_{j'}^{rs}$ for either $j'<j$ or $r>s$ or (possibly for $p\geq 1$) $j'=j$, $s=r+p'$ with $p'<p$. (Note that $A_0^{rr}$, $A_0^{r(r+p)}$ are invertible.)

\quad
To get $A_j^{r(r+p)}$ for $p\geq 1$ one needs to solve a simple matrix equation of the form $A^{T}X=B$ on $X$ with given $A\in GL_n(\mathbb{C})$ and $B\in \mathbb{C}^{n\times n}$ ($A\in GL_n(\mathbb{R})$, $B\in \mathbb{R}^{n\times n}$), while to get $A_j^{rr}$ we need to solve the equation of the form $A^{T}X+X^{T}A=B$ on $X$ with given $A\in GL_n(\mathbb{C})$, $B\in \mathbb{C}^{n\times n}_S$ ($A\in GL_n(\mathbb{R})$, $B\in \mathbb{R}^{n\times n}_S$); the solution is $X=\frac{1}{2}(A^{T})^{-1}B+(A^{T})^{-1}C$, where $C$ is any matrix with $C^{T}=-C$. Since the map $C\mapsto (A^{T})^{-1}C$ is biholomorphic (diffeomorphic), the complex (real) dimension of the solution is $\frac{n(n-1)}{2}$.

\quad
To prove the first statement of (\ref{EqT1}) and (\ref{EqT1a}) it remains to sum up dimensions:
\[
\sum_{r=1}^{N}\big(\alpha_r \cdot m_r\big(\sum_{s=1}^{r-1}m_s\big)+\tfrac{m_r(m_r-1)}{2}+(\alpha_r-1)\tfrac{m_r(m_r-1)}{2}\big)=
\sum_{r=1}^{N}\alpha_r m_r\big(\tfrac{1}{2}(m_r-1)+\sum_{s=1}^{r-1}m_s\big).
\]

\quad
Next, to prove (\ref{EqT1b}) we assume (\ref{aass}). We need to see that the solution $\mathcal{Y}$ obtained above is of the right form as announced in the statement of the lemma.
Using the notation
\[
K_{r}=
\begin{bsmallmatrix}
-\mu^{2}I_{m_r} &  0 \\
0 &  I_{m_r}
\end{bsmallmatrix}, \qquad 
L_{r}=
\begin{bsmallmatrix}
-\mu^{2}I_{m_r} &  0 \\
0 &  I_{m_r}
\end{bsmallmatrix}, 
\qquad r\in \{1,\ldots,N\}
\]
and
\[
\widetilde{A}_n^{rs}=
\begin{bmatrix}
V_n^{rs} &  W_1^{rs} \\
-\mu^{2}\overline{V}_n^{rs} &  \overline{W}_n^{rs} 
\end{bmatrix},\qquad
F_n^{rs}=
\begin{bmatrix}
0 &  0 \\
\overline{W}_n^{rs} & 0 
\end{bmatrix}, \qquad n\in \{0,\ldots,b_{rs}-1\},
\]
(recall that 
$A_n^{rs}=
\begin{bsmallmatrix}
V_n^{rs} &  W_n^{rs} \\
-\mu^{2}\overline{V}_n^{rs}+\overline{V}_{n-1}^{rs} &  \overline{W}_n^{rs} 
\end{bsmallmatrix}
$ in (\ref{aass})),
we write 
\begin{align*}
B_0^{r} & =u_0 K_r,\qquad B_n^{r}=u_n K_r+u_{n-1}, \quad n\in \{0,\ldots,b_{rs}-1\},\\
C_0^{rs} &= u_0K_rA_0^{rs},\qquad C_1^{rs}=K_r(u_0\widetilde{A}_1^{rs}+u_1 A_0^{rs})+u_0K_rF_0^{rs}+u_0L_rA_0^{rs},\\
C_n^{rs} & =\sum_{j=0}^{n} B_{n-j}^{r}A_j^{rs}=K_{r} \sum_{j=0}^{n}u_{n-j}A_j^{rs}+L_{r} \sum_{j=0}^{n-1}u_{n-1-j}A_j^{rs}, \qquad k\geq 2.\\
         & =K_{r} \sum_{j=0}^{n}u_{n-j}\widetilde{A}_j^{rs}+K_{r} \sum_{j=0}^{n-1}u_{n-1-j}F_j^{rs}+L_{r} \sum_{j=0}^{n-1}u_{n-1-j}\widetilde{A}_j^{rs}+L_{r} \sum_{j=0}^{n-2}u_{n-2-j}F_j^{rs},
\end{align*}
To simplify the calculations we further set
\begin{align*}
&D_0^{rs}=u_0 A_0^{rs}, \quad E_0^{rs}=0, \qquad 
D_n^{rs}= \sum_{j=0}^{n}u_{n-j}\widetilde{A}_j^{rs},
E_n^{rs}= \sum_{j=0}^{n}u_{n-j}F_j^{rs}, \quad n\in \{1,\ldots,b_{rs}\},\\
&\Delta_n^{ks}=\begin{bsmallmatrix}
D_n^{ks} \\
\ldots\\
D_0^{ks}\\
\end{bsmallmatrix},\quad
\varepsilon_n^{ks}=\begin{bsmallmatrix}
E_n^{ks} \\
\ldots\\
E_0^{ks}\\
\end{bsmallmatrix},\quad
\Phi_n^{ks}=\begin{bsmallmatrix}
F_n^{ks} \\
\ldots\\
F_0^{ks}\\
\end{bsmallmatrix},\quad
\widetilde{\Upsilon}_n^{ks}=\begin{bsmallmatrix}
\widetilde{A}_n^{ks} \\
\ldots\\
\widetilde{A}_0^{ks}\\
\end{bsmallmatrix}, \quad n\geq 0.\\
&\mathcal{K}_{r,n}=\oplus_{j=1}^{n} K_{r}, \quad \mathcal{L}_{r,n}=\oplus_{j=1}^{n} L_{r},\qquad  n\in \mathbb{N}, r\in \{1,\ldots,N\}
\end{align*}
It follows that (see (\ref{Prsk}))
\begin{align*}
P^{rsk}_0=& (A_0^{kr})^{T}C_0^{ks}= u_0(A_0^{kr})^{T} K_r A_0^{ks},\\
P^{rsk}_1=&\begin{bsmallmatrix}
(A_0^{kr})^{T} &  (A_{1}^{kr})^{T} 
\end{bsmallmatrix}
\begin{bsmallmatrix}
C_1^{ks} \\
C_0^{ks}
\end{bsmallmatrix} \\
= & (\widetilde{\Upsilon}_1^{ks})^{T}
\mathcal{K}_{r,2}
\Delta_1^{ks}
+(A_0^{kr})^{T}(u_0K_rF_0^{rs}+u_0L_rA_0^{rs})+u_0(F_0^{kr})^{T}K_r A_0^{rs}.
\end{align*}
and for $n\geq 2$:
\begin{align*}
P^{rsk}_n=& \begin{bsmallmatrix}
(A_0^{kr})^{T} & \ldots & (A_{n}^{kr})^{T} 
\end{bsmallmatrix}
\begin{bsmallmatrix}
C_n^{ks} \\
\ldots\\
C_0^{ks}\\
\end{bsmallmatrix}\\
= &
\big((\widetilde{\Upsilon}_n^{ks})^{T} +\begin{bsmallmatrix} 0\\
\Phi_{n-1}^{ks} \end{bsmallmatrix}^{T}\big)\mathcal{K}_{r,n+1}(\Delta_{n}^{ks}+\varepsilon_{n-1}^{ks}   ) 
+\big((\widetilde{\Upsilon}_{n-1}^{ks})^{T} +\begin{bsmallmatrix} 0\\
\Phi_{n-1}^{ks} \end{bsmallmatrix}^{T}\big)\mathcal{L}_{r,n}(\Delta_{n-1}^{ks}+\varepsilon_{n-2}^{ks} )\\
=&\left( (\Phi_{n-1}^{ks})^{T}\mathcal{K}_{r,n}
\Delta_{n-1}^{ks}
+\Upsilon_{n-1}^{ks}I_{n}(K_r)
\varepsilon_{n-1}^{ks}
+(\Upsilon_{n-1}^{ks})^{T}\mathcal{L}_{r,n}
\Delta_{n-1}^{ks} \right)+(\Phi_{n-1}^{ks})^{T} \mathcal{K}_{r,n+1}\varepsilon_{n}^{ks}\\
&+(\widetilde{\Upsilon}_n^{ks})^{T} \mathcal{K}_{r,n+1}\Delta_{n}^{ks}
\end{align*}
Denoting finally
\[
U_n^{rs}=\sum_{j=0}^{n}u_{n-j}V_j^{rs}, \qquad Z_n^{rs}=\sum_{j=0}^{n}u_{n-j}W_j^{rs}, \qquad (D_n^{rs}=\begin{bmatrix}U_n^{rs} & Z_n^{rs}\\
-\mu^{2}\overline{Z}_n^{rs} & \overline{U}_n^{rs}
\end{bmatrix})
\]
it is straightforward to compute
\begin{align*}
&
(\widetilde{\Upsilon}_n^{ks})^{T} \mathcal{K}_{r,n+1}\Delta_{n}^{ks}
=
\sum_{j=0}^{n}\begin{bsmallmatrix}
-\mu^{2}\big((V_{j}^{rs})^{T}\mathcal{V}_{n-j} -\mu^{2}(\overline{W}_{j}^{rs})^{T}\overline{Z}_{n-j}\big) & (V_{j}^{rs})^{T}Z_{n-j} +(\overline{W}_{j}^{rs})^{T}\overline{U}_{n-j}\\
(\overline{V}_{j}^{rs})^{T}\overline{Z}_{n-j} +(W_{j}^{rs})^{T}U_{n-j}   & -\mu^{2}\big((\overline{V}_{j}^{rs})^{T}\overline{U}_{n-j} -\mu^{2}(W_{j}^{rs})^{T}Z_{n-j}\big)
\end{bsmallmatrix},\\
&
(\Phi_{n-2}^{ks})^{T}\mathcal{K}_{r,n}\varepsilon_{n-2}^{ks}
=
\sum_{j=0}^{n-2}\begin{bsmallmatrix}
\overline{W}_{j}^{rs}\overline{Z}_{n-2-j} & 0 \\
0        & 0
\end{bsmallmatrix}
\end{align*}
and
\begin{align*}
&(\Phi_{n-1}^{ks})^{T}\mathcal{K}_{r,n}
\Delta_{n-1}^{ks}
+\Upsilon_{n-1}^{ks}\mathcal{K}_{r,n}
\varepsilon_{n-1}^{ks}
+(\Upsilon_{n-1}^{ks})^{T}\mathcal{L}_{r,n}
\Delta_{n-1}^{ks}=\\
=&
\sum_{j=0}^{n-1}\begin{bsmallmatrix}
-\mu^{2}(\overline{W}_{j}^{rs})^{T}\overline{Z}_{n-1-j} & \quad (\overline{W}_{j}^{rs})^{T}\overline{U}_{n-1-j}+(V_{j}^{rs})^{T}Z_{n-1-j} \\
(\overline{V}_{j}^{rs})^{T}\overline{Z}_{n-1-j}+(W_{j}^{rs})^{T}U_{n-1-j}         & \quad (W_{j}^{rs})^{T}Z_{n-1-j}
\end{bsmallmatrix}\\
&+
\sum_{j=0}^{n-1}\begin{bsmallmatrix}
(V_{j}^{rs})^{T}U_{n-1-j} -\mu^{2}(\overline{W}_{j}^{rs})^{T}\overline{Z}_{n-1-j} & 0 \\
0        & 0
\end{bsmallmatrix}
\end{align*}
%
%
%
We not write
\[
P_{n}^{rsk}=Q_{n}^{rsk}+R_{n}^{rsk},
\]
where $Q_{0}^{rsk}=u_0(A_0^{kr})^{T} K_r D_0^{ks}$, $R_{0}^{rsk}=0$ and
for $n\geq 1$:
\begin{align*}
&Q_{n}^{rsk}
=
(\widetilde{\Upsilon}_{n}^{ks})^{T} \mathcal{K}_{r,n}\Delta_{n}^{ks}
+
\sum_{j=0}^{n-1}\begin{bsmallmatrix}
-\mu^{2}(\overline{W}_{j}^{rs})^{T}\overline{Z}_{n-1-j} & \quad (\overline{W}_{j}^{rs})^{T}\overline{U}_{n-1-j}+(V_{j}^{rs})^{T}Z_{n-1-j} \\
(\overline{V}_{j}^{rs})^{T}\overline{Z}_{n-1-j}+(W_{j}^{rs})^{T}U_{n-1-j}         & \quad (W_{j}^{rs})^{T}Z_{n-1-j}
\end{bsmallmatrix}, \qquad\\
&R_{n}^{rsk}=
\left\{
\begin{array}{ll}
\sum_{j=0}^{n-1}
\begin{bsmallmatrix}
(V_{j}^{rs})^{T}U_{n-1-j} -\mu^{2}(\overline{W}_{j}^{rs})^{T}\overline{Z}_{n-1-j} & 0 \\
0        & 0
\end{bsmallmatrix}+
\sum_{j=0}^{n-2}\begin{bsmallmatrix}
(\overline{W}_{j}^{rs})^{T}\overline{Z}_{n-2-j} & 0 \\
0        & 0
\end{bsmallmatrix}, & n\geq 2\\
\begin{bsmallmatrix}
(V_{0}^{rs})^{T}U_{0} -\mu^{2}(\overline{W}_{0}^{rs})^{T}\overline{Z}_{} & 0 \\
0        & 0
\end{bsmallmatrix}, & n=1
\end{array}
\right..
\end{align*}
Note that we have $-\mu^{2}[R_{n}^{rsk}]_{11}=[Q_{n-1}^{rsk}]_{11}$ for any $n\geq 2$. In view of (\ref{Prsk}) we have
\[
(\widetilde{Y}_{rk})_{(1)}((\mathcal{S})_{ks})^{(j+1)}=\mathcal{Q}_{j}^{rsk}+\mathcal{R}_{j}^{rsk},
\]
where we denoted 
\[
\mathcal{Q}_{j}^{rsk}=
\left\{
\begin{array}{ll}
Q^{rsk}_{j-\alpha_s+\alpha_r-1}, & \alpha_s \geq \alpha_k\geq \alpha_r,j>\alpha_s-\alpha_k\\
Q^{rsk}_{j-1}, &  \alpha_r\geq  \alpha_k \geq \alpha_s\\
Q^{rsk}_{j-\alpha_k+\alpha_r-1}, & \alpha_k\geq \alpha_s,\alpha_r \\
Q^{rsk}_{j-\alpha_s+\alpha_k-1}, & \alpha_s,\alpha_r \geq \alpha_k, j>\alpha_s-\alpha_k \\
0, & \textrm{otherwise}
\end{array}
\right.,
\]
\[
\mathcal{R}_{j}^{rsk}=
\left\{
\begin{array}{ll}
R^{rsk}_{j-\alpha_s+\alpha_r-1}, & \alpha_s \geq \alpha_k\geq \alpha_r,j>\alpha_s-\alpha_k\\
R^{rsk}_{j-1}, &  \alpha_r\geq  \alpha_k \geq \alpha_s\\
R^{rsk}_{j-\alpha_k+\alpha_r-1}, & \alpha_k\geq \alpha_s,\alpha_r \\
R^{rsk}_{j-\alpha_s+\alpha_k-1}, & \alpha_s,\alpha_r \geq \alpha_k, j>\alpha_s-\alpha_k \\
0, & \textrm{otherwise}
\end{array}
\right..
\]
%

\quad
We now prove by induction that 
$\sum_{k=1}^{N} \mathcal{Q}_{j}^{rsk}=\left\{\begin{array}{ll} u_{j}K_r, & r=s\\ 0, & r\neq s \end{array}\right.$ for $j\geq 1$, (hence $\sum_{k=1}^{N} \mathcal{R}_{j}^{rsk}=\left\{\begin{array}{ll} u_{j-1}L_r, & r=s\\ 0, & r\neq s \end{array}\right.$, $j\geq 1$).
It is clear for $j=1$. Suppose that it holds for some $j\geq 1$. We have $\sum_{k=1} (\mathcal{Q}_{j+1}^{rsk}+\mathcal{R}_{j+1}^{rsk})=\mathcal{B}_{j+1}^{rs}=\left\{\begin{array}{ll} u_{j+1}K_r+u_{j}L_r, & r=s\\ 0, & r\neq s \end{array}\right.$. Using $(-\mu^{2}\sum_{k=1}^{N} (\mathcal{R}_{j+1}^{rsk})_{11})_{11}=(\sum_{k=1}^{N} \mathcal{Q}_{j}^{rsk})_{11}=\left\{\begin{array}{ll} -\mu^{2}u_j, & r=s\\ 0, & r\neq s \end{array}\right.$, the claim is deduced for $j+1$.  


\quad
Therefore the equation $\sum_{k=1}^{N}(\widetilde{\mathcal{Y}}_{rk})_{(1)}((\mathcal{B}\mathcal{Y})_{ks})^{(j+1)}=\mathcal{B}_{j}^{rs}$ 
is equivalent to 
\begin{equation}\label{EqK}
\sum_{k=1}^{N} \mathcal{Q}_{j}^{rsk}=  
\left\{\begin{array}{ll} u_{j}K_r, & r=s\\ 0, & r\neq s \end{array}\right..
\end{equation}


\quad
Observe that all summands of $Q_{j}^{rsk}$ (hence $\mathcal{Q}_{j}^{rsk}$) are of the form 
$\begin{bsmallmatrix}
-\mu^{2}V &  W\\
\overline{W} & \overline{V}
\end{bsmallmatrix}$, and sums of such terms are clearly of this form, as well.
Next, we use the equation (\ref{EqK}) to see that $A_0^{rr}$, $\widetilde{A}_0^{r(r+p)}$ for $p\geq 1$, $\widetilde{A}_n^{rr}$, $\widetilde{A}_n^{r(r+p)}$ for $p,n\geq 1$ are of this form, too. We obtain equations of the form $A^{T}K A=K$, $A^{T}KX=B$,
$A^{T}KX+X^{T}KA=B$ and $A^{T}KX=B$, respectively, where $K=\begin{bsmallmatrix}
-\mu^{2}I &  0\\
0 & I
\end{bsmallmatrix}$ and we are given a matrix $B$ of the form 
$\begin{bsmallmatrix}
-\mu^{2}V &  W\\
\overline{W} & \overline{V}
\end{bsmallmatrix}$.
Observe that $A^{T}K A=K$ is equivalent to $(N^{-1}A^{T}N)(N A N^{-1})=I$, $N=\frac{1}{i\mu}I\oplus I$ ($K=N^{2}$) and further $Y^{T}Y=I$, $Y=N A N^{-1}$. Then $A^{T}KX=B$ yields $X=K^{-1}(A^{T})^{-1}B=K^{-1}KAK^{-1}B=AK^{-1}B$, which is of the form $\begin{bsmallmatrix}
V &  W\\
-\mu^{2}\overline{W} & \overline{V}
\end{bsmallmatrix}$, provided that $A$ is of this form.
Next, the solution of $A^{T}KX+X^{T}KA=B$ is $X=\frac{1}{2}((KA)^{T})^{-1}B+((KA)^{T})^{-1}C=\frac{1}{2}(AK^{-1}B+AK^{-1}C$, $C^{T}=-C$. If we want $X$ to be of the right form then $C$ must be of the same form as $B$. Thus $C$ is of the form $\begin{bsmallmatrix}
-\mu^{2}V &  W\\
\overline{W} & \overline{V}
\end{bsmallmatrix}$ with $V_1^{T}=-V_1$, $W^{*}=-W$. Here $C$ adds $(2m_s'(m_2'-1)+m_s')$ to real dimension.

\quad
It is left to sum up dimensions:
%
\begin{align*}
&\sum_{r=1}^{N}\big(\alpha_r \cdot 4m_r'\sum_{s=1}^{r-1}2m_s'+\tfrac{2m_r'(2m_r'-1)}{2}+(\alpha_r-1)(2m_r'(2m_r'-1)+m'_r)\big)=\\
=&\sum_{r=1}^{N}m_r'\big(4\alpha_r m_r'-\alpha_r-2m_r'+8\alpha_r \sum_{s=1}^{r-1}m_s'\big).
\end{align*}
%
This concludes the proof of (\ref{EqT1}).

\item[{\bf Case \ref{EqT2}.}] Suppose now that $\mathcal{Y}$ is a block matrix and its blocks are upper-triangular complex-alternating Toeplitz matrices. 

\quad
We have $\mathcal{Y}$ in (\ref{0T0}) for $\mathcal{T}_{rs}=T_c(A_0^{rs},A_1^{rs},\ldots,A^{rs}_{b_{rs}-1})$ with $A_0^{rs},\ldots,A_{b_{rs}-1}^{rs}\in \mathbb{C}^{m_r\times m_s}$,  $b_{rs}=\min\{\alpha_r, \alpha_s\}$.
%
Note that $I_{m_s}A_0^{T}I_{m_r}=A_0^{T}$ and for $b_{rs}\geq 2$ we get
\[
E_{b_{rs}}(I_{m_s})\big(T_c(A_0,A_1,\ldots,A_{b_{rs}-1})\big)^{T}E_{b_{rs}}(I_{m_r})=\left\{
\begin{array}{ll}
T_c(\overline{A}_0^{T},A_1^{T},\ldots,
\overline{A}_{b_{rs}-2}^T,A_{b_{rs}-1}^T), & a_{rs} \textrm{ even}\\
T_c(A_0^{T},\overline{A}_1^{T},\ldots,
\overline{A}_{b_{rs}-2}^{T},A_{b_{rs}-1}^T), & a_{rs} \textrm{ odd}
\end{array}\right.
\]
Here the entry in the first row and the $j$-th column of $T_c(\overline{A}_0^{T},A_1^{T},\ldots,
\overline{A}_{b_{rs}-2}^T,A_{b_{rs}-1}^T)$ (or $T_c(A_0^{T},\overline{A}_1^{T},\ldots,
\overline{A}_{b_{rs}-2}^{T},A_{b_{rs}-1}^T)$) is $A_{j-1}$ for $j$ odd (even) and $\overline{A}_{j-1}$ for $j$ even (odd).
Thus blocks of $[\widetilde{\mathcal{Y}}_{rs}]_{rs}=\widetilde{\mathcal{Y}}=\mathcal{F}\mathcal{Y}^{T}\mathcal{F}$ are of the form  
\begin{align*}
&\widetilde{\mathcal{Y}}_{rs}=
E_{\alpha_r}(I_{m_r})\mathcal{Y}_{rs}^{T} E_{\alpha_s}(I_{m_s})=
\left\{
\begin{array}{cc}
\begin{bmatrix}
\widetilde{\mathcal{T}}_{rs}\\
0
\end{bmatrix}, 
& \alpha_r >\alpha_s \\
\begin{bmatrix}
0 & \widetilde{\mathcal{T}}_{rs}
\end{bmatrix}, & \alpha_r<\alpha_s\\
\widetilde{\mathcal{T}}_{rs}, & \alpha_r=\alpha_s
\end{array}
\right., \quad\\
&\widetilde{\mathcal{T}}_{rs}=
\left\{
\begin{array}{ll}
T_c\big((\overline{A}_0^{sr})^{T},(A_1^{sr})^{T},\ldots,(\overline{A}_{b_{rs}-2}^{sr})^{T},(A_{b_{rs}-1}^{sr})^{T}\big), & a_{rs} \textrm{ even}\\
T_c\big((A_0^{sr})^{T},(\overline{A}_1^{sr})^{T},\ldots,(\overline{A}_{b_{rs}-2}^{sr})^{T},(A_{b_{rs}-1}^{sr})^{T}\big), & a_{rs} \textrm{ odd}
\end{array}\right..
\end{align*}

Now we have
\begin{align*}
&\mathcal{S}_{rs}=
\left\{
\begin{array}{cc}
\begin{bmatrix}
S_{rs}\\
0
\end{bmatrix}, 
& m_r >m_s \\
\begin{bmatrix}
0 & S_{rs}
\end{bmatrix}, & m_r<m_s\\
S_{rs}, & m_r=m_s
\end{array}
\right., \quad
S_{rs}=T_c\big(C_0^{rs},C_1^{rs},\ldots, C_{n_{rs}-1}^{rs}\big), \, C_0^{r},\ldots, C_{n_{rs}-1}^{r}\in \mathbb{C}^{m_r\times m_r},
\end{align*}
where $C_0^{rs}=B_0^{rs}$ and for $n\geq 1$:
\begin{align*}
&C_{2n}^{rs}
=B_0^rA_{2n}^{rs}+B_1^r\overline{A}_{2n-1}^{rs}+\ldots+B_{2n-1}^r\overline{A}_{1}^{rs}+B_{2n}^rA_{0}^{rs}
=\sum_{j=0}^{n}B_{2n-2j}^{r}A_{2j}^{ks}+\sum_{j=0}^{n-1}B_{2n-2j-1}^{r}\overline{A}_{2j+1}^{ks}\\
&C_{2n+1}^{rs}
=B_0^rA_{2n+1}^{rs}+B_1^r\overline{A}_{2n}^{rs}+\ldots+B_{2n}^rA_{1}^{rs}+B_{2n}^r \overline{A}_{0}^{rs}
=\sum_{j=0}^{n}B_{2n-2j}^{r}A_{2j+1}^{ks}+\sum_{j=0}^{n}B_{2n-2j+1}^{r}\overline{A}_{2j}^{ks}.
\end{align*}

\quad
To simplify the computation we introduce the following notation. Given $A=\begin{bsmallmatrix} A_0 \\
A_1 \\ \ldots \\A_{\beta-1}\end{bsmallmatrix}$, we set $A_{ac}=\begin{bsmallmatrix} B_0 \\
B_1 \\ \ldots \\B_{\beta-1}\end{bsmallmatrix}$ with $B_{2n}=A_{2n}$, $B_{2n-1}=\overline{A}_{2n-1}$ for $0\leq 2n,2n-1\leq \beta-1$. We denote $\Upsilon^{kr}_0=(\Upsilon^{kr}_0)_{ac}=A_0^{kr}$ and for $n\geq 1$
\begin{align}\label{ac}
\Upsilon^{kr}_n=\begin{bsmallmatrix}
A_0^{kr} \\ A_1^{kr} \\ \ldots  \\ A_{n}^{kr} 
\end{bsmallmatrix}, \quad \Gamma^{kr}_n=\begin{bsmallmatrix}
C_n^{kr} \\ C_{n-1}^{kr} \\ \ldots  \\ C_{0}^{kr} 
\end{bsmallmatrix},\qquad 
(\Upsilon^{kr}_n)_{ac}=\begin{bsmallmatrix}
A_0^{kr} \\ \overline{A}_1^{kr} \\ \ldots 
\end{bsmallmatrix}, \quad (\Gamma^{kr}_n)_{ac}=\begin{bsmallmatrix}
C_n^{kr} \\ \overline{C}_{n-1}^{kr} \\ \ldots  
\end{bsmallmatrix}
\end{align}
and further set $M^{rsk}_{0}=(A_0^{kr})^{T}C_0^{kr}=(A_0^{kr})^{T}B_0^{r}A_0^{kr}$, $N^{rsk}_{0}=(\overline{A}_0^{kr})^{T}C_0^{kr}=(\overline{A}_0^{kr})^{T}B_0^{r}A_0^{kr}$,
\[
M^{rsk}_{n}=(\Upsilon^{kr}_n)^{T}_{ac}(\Gamma^{kr}_n)_{ac},\qquad
N^{rsk}_{n}=(\overline{\Upsilon}^{kr}_n)^{T}_{ac}(\Gamma^{kr}_n)_{ac},\qquad n\geq 1.
\]
%
%
We compute ($n\geq 0$):
\begin{align*}
(\overline{N}^{rsk}_{2n+1})^{T}
=&\sum_{j=0}^{n}(\overline{C}_{2j+1}^{kr})^{T}A_{2n-2j}^{ks}+\sum_{j=0}^{n}(C_{2j}^{kr})^{T}\overline{A}_{2n+1-2j}^{ks}\\
=&\sum_{j=0}^{n}\sum_{l=0}^{j}\big((\overline{A}_{2l+1}^{kr})^{T}B_{2j-2l}+(A_{2l}^{kr})^{T}B_{2j+1-2l}\big)A_{2n-2j}^{ks}\\
&+\big(\sum_{j=0}^{n}\sum_{l=0}^{j}(A_{2l}^{kr})^{T}B_{2j-2l}+\sum_{j=1}^{n}\sum_{l=0}^{j-1}(\overline{A}_{2l+1}^{kr})^{T}B_{2j-1-2l}\big)\overline{A}_{2n+1-2j}^{ks}\\
=& \sum_{l=0}^{n}\sum_{j=l}^{n}(\overline{A}_{2l+1}^{kr})^{T}B_{2j-2l}A_{2n-2j}^{ks}+\sum_{l=0}^{n}\sum_{j=l}^{n}(A_{2l}^{kr})^{T}B_{2j+1-2l}A_{2n-2j}^{ks}\\
&+\sum_{l=0}^{n}\sum_{j=l}^{n}(A_{2l}^{kr})^{T}B_{2j-2l}\overline{A}_{2n+1-2j}^{ks}+\big(\sum_{l=0}^{n-1}\sum_{j=l+1}^{n}(\overline{A}_{2l+1}^{kr})^{T}B_{2j-1-2l}\big)\overline{A}_{2n+1-2j}^{ks}\\
=&(\overline{A}_{2n+1}^{kr})^{T}B_0A_0
+\sum_{l=0}^{n}(A_{2l}^{kr})^{T}\sum_{j'=0}^{n-l}\big(B_{2j'+1}A_{2n-2l-2j'}^{ks}+B_{2j'}\overline{A}_{2n+1-2l-2j'}^{ks}\big)\\
&+\sum_{l=0}^{n-1}(\overline{A}_{2l+1}^{kr})^{T}\left(B_0A_{2n-2l}+\sum_{j'=0}^{n-1-l}\big(B_{2j'+2}A_{2n-2l-2j'-2}^{ks} +B_{2j'+1}\overline{A}_{2n-1-2l-2j}^{ks}\big)\right)
\\
=&\sum_{l=0}^{n}(\overline{A}_{2l+1}^{kr})^{T} C_{2n-2l}^{ks}+ \sum_{l=0}^{n}(A_{2l}^{kr})^{T} \overline{C}_{2n+1-2l}^{ks}=\overline{N}^{rsk}_{2n+1}
\end{align*}
Likewise we obtain for $n\geq 1$:
\begin{align*}
(N^{rsk}_{2n})^{T}
=&\sum_{j=1}^{n}(\overline{C}_{2j-1}^{kr})^{T}A_{2n+1-2j}^{ks}+\sum_{j=0}^{n}(C_{2j}^{kr})^{T}\overline{A}_{2n-2j}^{ks}\\
=&\sum_{j=1}^{n}\sum_{l=0}^{j-1}\big((A_{2l}^{kr})^{T}B_{2j-1-2l}+(\overline{A}_{2l+1}^{kr})^{T}B_{2j-2-2l}\big)A_{2n+1-2j}^{ks}\\
&+\big(\sum_{j=0}^{n}\sum_{l=0}^{j}(A_{2l}^{kr})^{T}B_{2j-2l}+\sum_{j=1}^{n}\sum_{l=0}^{j-1}(\overline{A}_{2l+1}^{kr})^{T}B_{2j-1-2l}\big)\overline{A}_{2n-2j}^{ks}
\end{align*}
\begin{align*}
=&\sum_{l=0}^{n-1}\sum_{j=l+1}^{n}\big((A_{2l}^{kr})^{T}B_{2j-1-2l}A_{2n+1-2j}^{ks}\big)+\sum_{l=0}^{n}\sum_{j=l}^{n}\big((A_{2l}^{kr})^{T}B_{2j-2l}\overline{A}_{2n-2j}^{ks}\big)\\
&+\sum_{l=0}^{n-1}\sum_{j=l+1}^{n}\big((\overline{A}_{2l+1}^{kr})^{T}B_{2j-2l}A_{2n+1-2j}^{ks}+(\overline{A}_{2l+1}^{kr})^{T}B_{2j-1-2l}\overline{A}_{2n-2j}^{ks}\big)\\
=&(A_{2n}^{kr})^{T}B_0\overline{A}_0
+\sum_{l=0}^{n-1}(\overline{A}_{2l+1}^{kr})^{T}\sum_{j'=0}^{n-l-1}\big(B_{2j'-2}A_{2n-1-2j-2l}^{ks}+B_{2j'+1}\overline{A}_{2n-2j'-2l-2}^{ks}\big)\\
+&\sum_{l=0}^{n-1} (A_{2l}^{kr})^{T}\left(B_{0}\overline{A}_{2n-2l}^{ks}+\sum_{j'=0}^{n-l-1}\big(B_{2j'+1}A_{2n-1-2j'-2l}^{ks}+B_{2j'+2}\overline{A}_{2n-2j'-2l-2}^{ks}\big)\right)\\
=&\sum_{l=0}^{n}(A_{2l}^{kr})^{T} \overline{C}_{2n-2l}^{ks}+ \sum_{l=0}^{n-1}(\overline{A}_{2l+1}^{kr})^{T} C_{2n-1-2l}^{ks}=\overline{N}^{rsk}_{2n}
\end{align*}
In a similar fashion we prove
\[
(M^{rsk}_{2n+1})^{T}=\overline{M}^{rsk}_{2n+1}, \qquad (M^{rsk}_{2n})^{T}=M^{rsk}_{2n}.
\]
In view of (\ref{YSP}) for $r=s$ we deduce that
\begin{equation}\label{YSMN}
(\widetilde{Y}_{rk})_{(1)}(\mathcal{S}_{kr})^{(j)}=
\left\{
\begin{array}{ll}
\overline{N}^{rrk}_{j-\alpha_k+\alpha_r-1}, & \alpha_k\geq\alpha_r, \alpha_k \textrm{ even}, \alpha_r \textrm{ odd} \\
\overline{M}^{rrk}_{j-\alpha_k+\alpha_r-1}, & \alpha_k\geq\alpha_r, \alpha_k \textrm{ odd}, \alpha_r \textrm{ even} 
\\
N^{rrk}_{j-\alpha_k+\alpha_r-1}, & \alpha_k\geq \alpha_r,\alpha_k\textrm{ even}, \alpha_r \textrm{ even} \\
M^{rrk}_{j-\alpha_k+\alpha_r-1}, & \alpha_k\geq \alpha_r, \alpha_k\textrm{ odd}, \alpha_r \textrm{ odd} 
\\
M^{rrk}_{j-\alpha_r+\alpha_k-1}, & j>\alpha_r-\alpha_k\geq 0, \alpha_k \textrm{ odd}, \alpha_r \textrm{ even},\\
& \textrm{or}\quad \alpha_k, \alpha_r \textrm{ odd} \\
N^{rrk}_{j-\alpha_r+\alpha_k-1}, & j>\alpha_r-\alpha_k\geq 0, \alpha_k \textrm{ even}, \alpha_r \textrm{ odd},\\
& \textrm{or}\quad \alpha_k, \alpha_r \textrm{ even} \\
0, & \textrm{otherwise}
\end{array}
\right.
\end{equation}
and we observe that $(\widetilde{Y}_{rk})_{(1)}(\mathcal{S}_{kr})^{(j)}$ (hence $\sum_k(\widetilde{Y}_{rk})_{(1)}(\mathcal{S}_{kr})^{(j)}$) is symmetric for $j-\alpha_r$ even and Hermitian for $j-\alpha_r$ odd. 

\quad
Again, as in Case \ref{EqT1}. the step STEP a.a fixes arbitrarily the blocks below the main diagonal of the block matrix 
$[\mathcal{Y}]_{r,s=1}^{N}$, 
which adds $2\sum_{r=1}^{N}\alpha_r \cdot m_r\sum_{s=1}^{r-1}m_s$ to real dimension of the solution.

\quad
We proceed by computing matrices in STEP 0.0. Since
\[
(\widetilde{Y}_{rk})_{(1)}=\left\{
\begin{array}{ll}
\begin{bmatrix}
(A_0^{rr})^{T} & *
\end{bmatrix}, & k=r, r \textrm{ odd}\\
\begin{bmatrix}
(\overline{A}_0^{rr})^{T} & *
\end{bmatrix}, & k=r, r \textrm{ even}\\
\begin{bmatrix}
0 & *
\end{bmatrix}, & k<r
\end{array}
\right.,\qquad
((\mathcal{S})_{kr})^{(1)}=
\left\{
\begin{array}{ll}
\begin{bmatrix}
B_0^{k}A_0^{kr} \\
0
\end{bmatrix}, & k\leq r\\
0, & k>r
\end{array}
\right.,
\]
we obtain
\begin{equation*}
((\widetilde{Y}\mathcal{S})_{rr})_{11}=\sum_{k=1}^{N}(\widetilde{Y}_{rk})_{(1)}((\mathcal{S})_{kr})^{(1)}=(\widetilde{Y}_{rr})_{(1)}((\mathcal{S})_{rr})^{(1)}=
\left\{
\begin{array}{ll}
(A_0^{rr})^{T}B_0^{r}A_0^{rr}, & \alpha_r \textrm{ odd}\\
(\overline{A}_0^{rr})^{T}B_0^{r}A_0^{rr}, & \alpha_r \textrm{ even}
\end{array}
\right..
\end{equation*}
Therefore (\ref{eqFYFIY}) yields
\[
G_0^{r}=\left\{
\begin{array}{ll}
(A_0^{rr})^{T}B_0^{r}A_0^{rr}, & \alpha_r \textrm{ odd}\\
(\overline{A}_0^{rr})^{T}B_0^{r}A_0^{rr}, & \alpha_r \textrm{ even}
\end{array}
\right.,\qquad  r\in \{1,\ldots,N\}.
\]
Since $B_0^{r},G_0^{r}$ are real symmetric, then by Sylvester's inertia theorem this equation for $\alpha_r$ even has a solution for $A_0^{rr}$ precisely when $B_0^{r},G_0^{r}$ are of the same inertia. The case when $\alpha_r$ is odd is treated as in Case \ref{EqT1}. Furthermore, the complex dimension of the (possible) solution is $\frac{m_r(m_r-1)}{2}$ for $\alpha_r$ odd ($m_r^{2}$ for $\alpha_r$ even).

\quad
Again, we compute $A_j^{r(r+p)}$, $N-1\geq p\geq 0$, $r\in \{1,\ldots,N-p\}$ such that $\alpha_r\geq j+1$ (STEP j.p.), while assuming that all matrices from the previous steps are already determined ($A_{j}^{rs}$ for $r>s$ and any $j$, $A_{j'}^{r(r+p')}$ for either $j'<j$ or $j'=j$, $p'<p$). 
Using the same arguments as in Case \ref{EqT1}. we prove that
the second term in (\ref{YSp1}), the second and the third term in (\ref{f1}) and the second term in (\ref{fN}) possibly depend only on  matrices determined in the previous steps; recall that we still have that $C_{n}^{rs}=B_0^{r}A_{n}^{rs}+\theta_n^{rs}$ for $n\geq 1$ with $\theta_n^{rs}$ depending only on $A_{j'}^{rs}$ for $j'<n$.
Furthermore, the first term of (\ref{f00}), (\ref{YSp1}), (\ref{f1}), (\ref{fN}) is a matrix product of (r=s+p):
\begin{align*}
&(\widetilde{Y}_{rr})_{(1)}=\left\{
\begin{array}{ll}
(\Upsilon^{rr}_{\alpha_{r}-1})_{ac}^{T}, & \alpha_{r}\textrm{ odd}\\
(\overline{\Upsilon}^{rr}_{\alpha_{r}-1})_{ac}^{T}, & \alpha_{r}\textrm{ even}
\end{array}
\right.,\\
&(\mathcal{S}_{rr})^{(\alpha_r)}=
(\Gamma^{rr}_{\alpha_{r}-1})_{ac},\qquad
(\mathcal{S}_{r(r+p)})^{(j+1)}=
\begin{bsmallmatrix}
(\Gamma^{r(r+p)}_{j})_{ac} \\
\ldots\\
0\\
\ldots\\
0
\end{bsmallmatrix}, j< \alpha_r-1\textrm{ or } p\geq 0.
\end{align*}
We thus have
\begin{align*}
&(\widetilde{Y}_{rr})_{(1)}(\mathcal{S}_{r(r+p)})^{(j+1)}=\Xi(j,r,p)+\left\{\begin{array}{ll}
(A_0^{rr})^{T}B_0^{r}A_j^{r(r+p)}, & \alpha_{r} \textrm{ odd}\\
(\overline{A}_0^{rr})^{T}B_0^{r}A_j^{r(r+p)}, & \alpha_{r} \textrm{ even}, \qquad p\geq 1
\end{array}
\right.,\\
&(\widetilde{Y}_{rr})_{(1)}(\mathcal{S}_{rr})^{(j+1)}=\Xi(j,r,0)+
\left\{\begin{array}{ll}
(A_0^{rr})^{T}B_0^{r}A_j^{rr}+(A_j^{rr})^{T}B_0^{r}A_0^{rr}, &  \alpha_r,j \textrm{ odd}\\
(A_0^{rr})^{T}B_0^{r}A_j^{rr}+(\overline{A}_j^{rr})^{T}B_0^{r}\overline{A}_0^{rr}, &  \alpha_s\textrm{ odd},j \textrm{ even}\\
(\overline{A}_0^{rr})^{T}B_0^{r}A_j^{rr}+(A_j^{rr})^{T}B_0^{r}\overline{A}_0^{rr}, &  \alpha_r,j \textrm{ even}\\
(\overline{A}_0^{rr})^{T}B_0^{r}A_j^{rr}+(\overline{A}_j^{rr})^{T}B_0^{r}A_0^{rr}, &  \alpha_r\textrm{ even},j \textrm{ odd}\\
\end{array}
\right.,
\end{align*}
where $\Xi(j,r,p)$ depends only on $A_{j'}^{r(r+p')}$ for either $j'<j$ or (possibly for $p\geq 1$) $j'=j$, $p'<p$. In addition $\Xi(j,r,0)$ is symmetric (Hermitian) for $j-\alpha_r$ odd (even); see (\ref{YSMN}). (Remember that $C_{j'}^{rs}=B_{0}^{r}A_{j'}^{rs}+R(j',r,s)$, where $R(j',r,s)$, $j'\geq 1$ depends on $A_{j''}^{rs}$, $j''<j'$ and $R(0,r,s)=0$.) By comparing the entries in the first row and $(j+1)$-th column of the blocks in the $s-p$-th row and $s$-th column in (\ref{eqFYFIY}) we get:
\begin{align*}
&(\overline{A}_0^{rr})^{T}A_j^{r(r+p)}=\Xi'(j,r,p), \qquad
\alpha_{r+p}+j+\alpha_{r} \textrm{ odd}, \qquad p\geq 1\\
&(A_0^{rr})^{T}A_j^{r(r+p)}=\Xi'(j,r,p), \qquad
\alpha_{r+p}+j+\alpha_{s-p} \textrm{ even}, \qquad p\geq 1,\\
&(A_0^{rr})^{T}A_j^{rr}+(A_j^{rr})^{T}A_0^{rr}=\Xi '(j,r,0), \qquad \alpha_r,j \textrm{ odd}, (p=0),\\
&(A_0^{rr})^{T}A_j^{rr}+(\overline{A}_j^{rr})^{T}\overline{A}_0^{rr}=\Xi '(j,r,0), \qquad \alpha_r \textrm{ odd}, j \textrm{ even } (p=0)
\end{align*}
\begin{align*}
&(\overline{A}_0^{rr})^{T}A_j^{rr}+(A_j^{rr})^{T}\overline{A}_0^{rr}=\Xi '(j,r,0), \qquad \alpha_r,j \textrm{ even }, (p=0),\\
&(\overline{A}_0^{rr})^{T}A_j^{rr}+(\overline{A}_j^{rr})^{T}A_0^{rr}=\Xi '(j,r,0), \qquad \alpha_r \textrm{ even }, j \textrm{ odd} (p=0)
\end{align*}
where $\Xi'(j,r,p)$ for $p\geq 0$ depends only on $A_{j'}^{r(r+p')}$ for either $j'<j$ or (possibly for $p\geq 1$) $j'=j$, $p'<p$, and in addition $\Xi'(j,s,0)$ is symmetric (Hermitian) if $\alpha_r-j$ even (odd); recall (\ref{YSMN}). Note also that $A_0^{rr}$ are invertible.

\quad
To get $A_j^{r(r+p)}$, $p\geq 1$ one needs to solve a simple matrix equation of the form $A^{T}X=B$ (or $A^{*}X=B$) on $X$ with given $A\in GL_n(\mathbb{C})$ and $B\in \mathbb{C}^{n\times n}$, while to get $A_j^{rr}$ we need to solve the equation of the form $A^{T}X+X^{T}A=B$ (or $A^{*}X+X^{*}A=B$) on $X$ with given $A\in GL_n(\mathbb{C})$, $B=B^{T}$ ($B=B^{*}$); the solution is $X=\frac{1}{2}(A^{T})^{-1}B+(A^{T})^{-1}C$ ($X=\frac{1}{2}(A^{*})^{-1}B+(A^{*})^{-1}C)$), where $C$ is any matrix with $C^{T}=-C$ ($C^{*}=-C$). Again, the map $C\mapsto (A^{T})^{-1}C$ ($C\mapsto (A^{*})^{-1}C$) is biholomorphic (diffeomorphic), thus the real dimension of the solution is $\frac{2m_r(2m_r-1)}{2}$ (or $\frac{2m_r(2m_r-1)}{2}+m_r$).

\quad
It is only left to add up the dimensions:
\begin{align*}
&\sum_{\alpha_r \textrm{ even}}\left(\big(4m_r^{2}+(\frac{\alpha_r}{2}-1)(\tfrac{2m_r(2m_r-1)}{2}+m_r)+(\frac{\alpha_r}{2})(\tfrac{2m_r(2m_r-1)}{2})+\alpha_r \cdot 2m_r\sum_{s=1}^{r-1}2m_s\big)\right)+\\
&\sum_{\alpha_r \textrm{ odd}}\left(\big(\tfrac{2m_r(2m_r-1)}{2}+\tfrac{\alpha_r-1}{2}(\tfrac{2m_r(2m_r-1)}{2}+m_r)+\tfrac{\alpha_r-1}{2}\tfrac{2m_r(2m_r-1)}{2}+\alpha_r \cdot 2m_r\sum_{s=1}^{r-1}2m_s\big)\right)\\
=&\sum_{\alpha_r \textrm{ even}}m_r\left(\tfrac{3\alpha_r}{2}\right)+\sum_{\alpha_r \textrm{ odd}}m_r\left(-\tfrac{\alpha_r+1}{2}\right)+\sum_{r=1}^{N}2\alpha_r m_r \big(m_r+\sum_{s=1}^{r-1}2m_s\big)
\end{align*}
\end{enumerate}
This concludes the proof of (\ref{EqT2}).
\end{proof}

\begin{remark}
One could consider the equation (\ref{eqFYFIY}) even in the case when the diagonal blocks $\mathcal{B}$, $\mathcal{B}'$ might not be invertible. Note that in this more general setting the set of solutions of the equations $A^{T}X+X^{T}A=B$ and $A^{*}{T}X+X^{*}A=B$ is known (see \cite{Brad} and \cite{LR}).
\end{remark}

\begin{example}
We solve (\ref{eqFYFIY}) for $\mathcal{F}=E_4(I)\oplus E_2(I)\oplus I$, $\mathcal{B}=\mathcal{B}'=I_4(I)\oplus I_2(I)\oplus I$. Set 
\small
$\mathcal{Y}=\begin{bmatrix}[cccc|cc|c]
A_1 & B_1 & C_1 & D_1  &  H_1  &  I_1  &  J_1\\
0   & \overline{A}_1 & \overline{B}_1   & \overline{C}_1  &  0  &   \overline{H}_1  &  0\\
0   & 0   & A_1   & B_1    &  0 & 0 & 0 \\
0   & 0   &  0  &  \overline{A}_1   & 0 &0 & 0\\
\hline
0   & 0   & N_1 & P_1                         &  A_3  &  B_3  &  J_3\\
0   & 0   & 0   & \overline{N}_1              &  0    &   \overline{A}_3  &  0\\
\hline
0   & 0   & 0   & R_1                        &  0    &   R_3    & A_4 
\end{bmatrix}$ 
\normalsize

We compute:
\small
\setlength{\arraycolsep}{3pt}
\begin{align*}
&\widetilde{\mathcal{Y}}\mathcal{Y}=\begin{bmatrix}[cccc|cc|c]
\overline{A}_1^{T} & B_1^{T} & \overline{C}_1^{T} & D_1^{T}  &  \overline{N}_1^{T}  &  P_1^{T}  &  R_1^{T}\\
0   & A_1^{T} & \overline{B}_1^{T}   & C_1^{T}  &  0  &   N_1^{T}  &  0\\
0   & 0   & \overline{A}_1^{T}   & B_1^{T}    &  0 & 0 & 0 \\
0   & 0   &  0  &  A_1^{T}  & 0 &0 & 0\\
\hline
0   & 0   & \overline{H}_1^{T} & I_1^{T}                        &  \overline{A}_3^{T}  &  B_3^{T}  &  R_3^{T}\\
0   & 0   & 0   & H_1^{T}              &  0    &   A_3  &  0\\
\hline
0   & 0   & 0   & J_1^{T}                         &  0    &   J_3^{T}    & A_4 
\end{bmatrix}
\begin{bmatrix}[cccc|cc|c]
A_1 & B_1 & C_1 & D_1  &  H_1  &  I_1  &  J_1\\
0   & \overline{A}_1 & \overline{B}_1   & \overline{C}_1  &  0  &   \overline{H}_1  &  0\\
0   & 0   & A_1   & B_1    &  0 & 0 & 0 \\
0   & 0   &  0  &  \overline{A}_1   & 0 &0 & 0\\
\hline
0   & 0   & N_1 & P_1                         &  A_3  &  B_3  &  J_3\\
0   & 0   & 0   & \overline{N}_1              &  0    &   \overline{A}_3  &  0\\
\hline
0   & 0   & 0   & R_1                        &  0    &   R_3    & A_4 
\end{bmatrix}=
\end{align*}
\scriptsize
\setlength{\arraycolsep}{3pt}
\begin{align*}
&=
\begin{bmatrix}[cccc|cc|c]
\overline{A}_1^{T}A_1 & \overline{A}_1^{T}B_1+B_1^{T}\overline{A}_1 & \overline{A}_1^{T}C_1+\overline{C}_1^{T}A_1 & *  &  \overline{A}_1^{T}H_1+\overline{N}_1^{T}A_3  &  \overline{A}_1^{T}I_1+B_1^{T}\overline{H}_1+\overline{N}_1^{T}B_3  &  \overline{N}_1^{T}J_3+R_1^{T}A_4\\
           &                       & + B_1^{T}\overline{B}_1+\overline{N}_1^{T}N_1               &          &                         &           +P_1^{T}\overline{A}_3+R_1^{T}R_3 & +\overline{A}_1^{T}J_1 \\
           & A_1^{T}\overline{A}_1 &  A_1^{T}\overline{B}_1+\overline{B}_1^{T}A_1   & A_1^{T}\overline{C}_1+C_1^{T}\overline{A}_1  &  0  &   A_1^{T}\overline{H}_1+N_1^{T}\overline{A}_3  &  0\\
   &                       &                          &       + \overline{B}_1^{T}B_1+N_1^{T}\overline{N}_1    &     &                          &   \\
   &                       & \overline{A}_1^{T}A_1               &  \overline{A}_1^{T}B_1+\overline{B}_1^{T}A_1           &  0  & 0                      &      0 \\
   &    &    &  A_1^{T}\overline{A}_1  &  0   &0 & 0\\
\hline 
   &    & &                      &  \overline{A}_{3}^{T}A_3  &   \overline{A}_3^{T}B_3+B_3^{T}\overline{A}_3  &  \overline{A}_3^{T}J_3+R_3^{T}A_4\\
   &    & &                      &                &    + R_3^{T}R_3          &                       \\
   &    &    &              &         &   A_3^{T}\overline{A}_3  &  0\\
\hline
   &    &    &                       &      &      & A_4 ^{T}A_4
\end{bmatrix}
\end{align*}
\normalsize
The diagonal of the diagonal blocks gives that $A_1\ldots,A_4$ are any orthogonal matrices. Now we choose $N_1$, $P_1$, $R_1$, $R_3$ arbitrarily. The upper diagonal blocks yield $ \overline{A}_1^{T}H_1+\overline{N}_1^{T}A_3=0$ and $\overline{A}_3^{T}J_3+R_3^{T}A_4=0$, which further gives $H_1,J_1$. The second upper diagonal block yields $\overline{N}_1^{T}J_3+\overline{A}_1^{T}J_1+R_1^{T}A_4=0$, thus $J_1$ follows.

\quad
Next, observing the upper diagonal of the main diagonal blocks gives $\overline{A}_1^{T}B_1+B_1^{T}\overline{A}_1=0$ and $\overline{A}_3^{T}B_3+B_3^{T}\overline{A}_3+R_3^{T}R_3=0$, so we deduce $B_1,B_3$. From the upper diagonal of the upper diagonal we obtain $A_1^{T}I_1+B_1^{T}H_1+N_1^{T}B_3 +P_1^{T}A_3+R_1^{T}R_3=0$, so we deduce $I_1$.

\quad
The third and fourth upper diagonal block of the first principal diagonal block gives $\overline{A}_1^{T}C_1+\overline{C}_1^{T}A_1+B_1^{T}\overline{B}_1+\overline{N}_1^{T}N_1 =0$, $\overline{A}_1^{T}D_1+B_1^{T}\overline{C}_1+\overline{C}_1^{T}B_1+D_1^{T}\overline{A}_1+\overline{N}_1^{T}P_1+P_1^{T}\overline{N}_1+R_1^{T}R_1 =0$ (see $*$), therefore $C_1$, $D_1$, respectively.

\end{example}

\section{Proofs of Theorem \ref{stabs}, Theorem \ref{stabz} and Corollary \ref{posapp}}\label{sec2}

To prove Theorem \ref{stabs} (Theorem \ref{stabz}), we first use Lemma \ref{posls} (Lemma \ref{posl}) to solve the first equation of (\ref{eqAoXXB1}) (of (\ref{eqAoXXB})) on $Q$ for a given normal form $A=B$. Taking into account that $Q$ satisfies $Q^{T}Q=I$ (the second equation of (\ref{eqAoXXB1}), (\ref{eqAoXXB})), then  yields a certain matrix equation and further restricting the form of $Q$; at this point Lemma \ref{EqT} is applied.

\begin{proof}[Proof of Theorem \ref{stabs}]
We need to solve the equation
\begin{equation}\label{SQQS}
(\mathcal{S}(A))Q= Q(\mathcal{S}(A)),
\end{equation}
where $Q$ is a complex orthogonal matrix $Q$. 

By writing 
$\mathcal{S}(A)=\bigoplus_{r=1}^{N}\mathcal{S}(A,\rho_r)$,
where all blocks of 
$\mathcal{S}(A)$ corresponding to the eigenvalue $\rho_r$ of $A\overline{A}$ are collected together into 
$\mathcal{S}(A,\rho_r)$, 
it then follows from Lemma \ref{posl} that $Q$ in (\ref{SQQS}) is of the form $Q=\oplus_{r=1}^{N}Q^{r}$ (hence $Q=\oplus_{r}Q^{r}$), and 
(\ref{SQQS}) thus splits into equations 
\begin{equation*}
\mathcal{S}(A,\rho_r)Q^{r}=Q^{r}\mathcal{S}(A,\rho_r),\quad  r\in\{1,\ldots,N\}.
\end{equation*}
%
%
%

Suppose
\[
\mathcal{S}(A)=\bigoplus_{r=1}^{N} \mathcal{S}_{\alpha_r}(\lambda)=\bigoplus_{r=1}^{N}\left( \bigoplus_{j=1}^{m_r} S_{\alpha_r}(\lambda)\right),
\]
where $\mathcal{S}_{\alpha_r}(\lambda)$, $r\in \{1,\ldots,N\}$ is a direct sum of all blocks of size $\alpha_r\times \alpha_r $ corresponding to the eigenvalue $\lambda$.
By Lemma \ref{posls} the solution $Q$ of the equation $\mathcal{S}(A)\overline{Q}=Q\mathcal{S}(A)$ 
is of the form 
\begin{equation}\label{QPP}
Q=P^{-1}YP, \qquad\quad
P=\oplus_{r=1}^{N}P_{r}', \quad 
P_r'=\oplus_{j=1}^{m_r}P_{\alpha_r}, \quad
P_{\alpha_r}=\tfrac{1}{\sqrt{2}}(I_{\alpha_r}+iE_{\alpha_r}),
\end{equation}
where $Y=[Y_{rs}]_{r,s=1}^{N}$ with further $Y_{rs}$ is a $m_r\times m_s$ block matrix whoose blocks of dimension $\alpha_r\times \alpha_s$ are of the form 
%
%
of the form (\ref{QT00})
for $m=\alpha_r$, $n=\alpha_s$: 
\begin{equation}
\left\{
\begin{array}{ll}
[0\quad T], & \alpha_{r}<\alpha_{s}\\
\begin{bmatrix}
T\\
0
\end{bmatrix}, & \alpha_{r}>\alpha_{s}\\
T,& \alpha_r=\alpha_{s}
\end{array}\right.,
\end{equation}
where $T\in \mathbb{C}^{m\times m}$, $m=\min\{\alpha_r, \alpha_s\}$ is a complex upper-triangular Toeplitz matrix.

From (\ref{QPP}) we get $P=P^T$, $P^{-1}=\overline{P}$, $(P_{r}')^{2}=-\overline{P'}_{_r}^2=-(P_{\alpha_r}')^{-2}=i\big(\oplus_{j=1}^{m_r}(E_{\alpha_r})\big)$, hence $P^{2}=\overline{P}^2=-P^{-2}=iE$, where $E=\oplus_{r=1}^{N}\left(\oplus_{j=1}^{m_r} (E_{\alpha_r}) \right)$. Thus $I=Q^TQ$ if and only if 
\begin{align}\label{QTQS}
I=   & (P^TY^T(P^{-1})^T)(P^{-1}Y P )\nonumber\\
 I  = & P(P^TY^T(P^{-1})^T)(P^{-1}Y P )P^{-1},\nonumber\\
 I = & P^2Y^TP^{-2}Y \\
  I = & iE Y^{T} (-iE)Y\nonumber\\
    I = & E Y^{T} E Y\nonumber
\end{align}

Proceed by $T$-conjugating with the permutation matrix $\Omega=\oplus_{r=1}^{N}\Omega_r $ with $\Omega_r$ as in (\ref{perS}) to get block matrices such that their blocks are block Toeplitz matrices:
\begin{align}\label{ortoC3S}
 &I = (\Omega^{T}E\Omega)(\Omega^{T} Y^{T} \Omega)(\Omega^{T}E\Omega)(\Omega^{T}Y\Omega)\\
 &I = \mathcal{F}\mathcal{Y}^{T}\mathcal{F}\mathcal{Y}.\nonumber
\end{align}
where we denoted $\mathcal{F}=\Omega^{T}E\Omega=\oplus_{j=1}^{N}E_{\alpha_r}(E_{m_r})$ and $\mathcal{Y}=\Omega^{T}Y\Omega$ is 
of the form (\ref{0T0}) with complex upper-triangular block Toeplitz matrices $\mathcal{T}_{rs}$.
%
%
By applying Lemma \ref{EqT} (\ref{EqT1}) to (\ref{ortoC3S}) we conclude the proof of the theorem.
\end{proof}

\begin{proof}[Proof of Theorem \ref{stabz}]
Given $\epsilon=\{\epsilon_j\}_j$ let $\mathcal{H}^{\epsilon}(A)$ be of the form (\ref{NF1}). We have
\begin{align*}
&\mathcal{H}^{1}(A)=  T_{\epsilon}\mathcal{H}^{\epsilon}(A)\overline{T_{\epsilon}^{-1}}, 
\end{align*}
\begin{align*}
T_{\epsilon}=S_{\epsilon}\oplus \left(\bigoplus_{k}(I_{\beta_k}\oplus I_{\beta_k})\right)\oplus \left(\bigoplus_{l}(I_{\gamma_l}\oplus I_{\gamma_l})\right), \quad 
S_{\epsilon}=\bigoplus_js_{\epsilon,j}I_{\alpha_j}, \, 
s_{\epsilon,j}=\left\{
\begin{array}{ll}
1, & \epsilon_j=1\\
i, & \epsilon_j=-1
\end{array}\right.
\end{align*}

To study orthogonal $*$-equivalence of $\mathcal{H}^{\widehat{\epsilon}}(A)$ and $\mathcal{H}^{\widetilde{\epsilon}}(A)$, we shall solve
\begin{equation}\label{HQQH}
(\mathcal{H}^{\widehat{\epsilon}}(A))\overline{Q}= Q(\mathcal{H}^{\widetilde{\epsilon}}(A)),
\end{equation}
where $Q$ is an orthogonal matrix. 
By (\ref{MQQNt}) the equation (\ref{HQQH}) then transforms to
\begin{equation}\label{H1QQH1}
(\mathcal{H}^{1}(A))\overline{X}= X(\mathcal{H}^{1}(A)),\qquad X=T_{\widehat{\epsilon}}Q T_{\widetilde{\epsilon}}^{-1}.
\end{equation}
By writing 
\[
\mathcal{H}^{\epsilon}(A)=\bigoplus_{r=1}^{N}\mathcal{H}^{\epsilon}(A,\rho_r),
\]
where all blocks of 
$\mathcal{H}^{\epsilon}(A)$ corresponding to the eigenvalue $\rho_r$ of $A\overline{A}$ are collected together into 
$\mathcal{H}^{\epsilon}(A,\rho_r)$, 
it then follows from Lemma \ref{posl} that $X$ in (\ref{H1QQH1}) ($Q$ in (\ref{HQQH})) is of the form $X=\oplus_{r=1}^{N}X^{r}$ (hence $Q=\oplus_{r=1}^{N}Q^{r}$).
Thus (\ref{H1QQH1}) (and (\ref{HQQH})) splits into
\begin{equation}\label{HQQHl}
\mathcal{H}^{1}(A,\rho_r)\overline{X}^{r}=X^{r}\mathcal{H}^{1}(A,\rho_r),\quad 
\quad 
X^{r}=\left\{
\begin{array}{ll}
S_{\widehat{\epsilon}}Q^{r} S_{\widetilde{\epsilon}}^{-1}, & \rho_r\geq 0\\
Q^{r}, & \textrm{otherwise}
\end{array}
\right..
\end{equation}
%
%
%
Trivially, the existence of an orthogonal matrix $Q$ in (\ref{HQQH}) ($X$ in (\ref{H1QQH1})) is then equivalent to the existence of  orthogonal matrices $Q^{r}$ and the corresponding $X^{r}$ in (\ref{HQQHl}) (for all $r\in \{1,\ldots,N\}$) which correspond to the eigenvalue $\rho_r\geq 0$. Furthermore, by taking $\widetilde{\epsilon}=\epsilon$ the matrix $Q$ is in the stabilizer of $\mathcal{H}^{\epsilon}$ with respect to the action $\Psi$.

\begin{enumerate}[label={\bf Case \arabic*.},ref={Case \arabic*},   ,wide=0pt,itemsep=15pt]

\item \label{caseH}
Suppose
\[
\mathcal{H}^{\epsilon}(A)
=\bigoplus_{r=1}^{N}\left( \bigoplus_{j=1}^{m_r} \epsilon_{r,j} H_{\alpha_r}(\lambda)\right), \qquad \lambda\geq 0
\]
where $H_{\alpha_r}(\xi)$, $r\in \{1,\ldots,N\}$ is as in (\ref{HKLmz}) for $z=\lambda$, $m=\alpha_r$.

\quad
By Lemma \ref{posl} the solution $Q$ of the equation $\mathcal{H}^{\epsilon}(A)\overline{Q}=Q\mathcal{H}^{\epsilon}(A)$ 
is of the form 
\[
Q=S_{\epsilon}^{-1}P^{-1}YPS_{\widetilde{\epsilon}}, \qquad
P=\oplus_{r=1}^{N}P_{r}', \quad 
P_r'=\oplus_{j=1}^{m_r}P_{\alpha_r}, \quad
P_{\alpha_r}=\tfrac{e^{-i\frac{\pi}{4}}}{\sqrt{2}}(I_{\alpha_r}+iE_{\alpha_r}),
\]
and where $Y=[Y_{rs}]_{r,s=1}^{N}$ is a block matrix whoose block $Y_{rs}$ is further a $m_r\times m_s$ block matrix with blocks of dimension $\alpha_r\times \alpha_s$ and of the form 
%
%
of the form (\ref{QT})
for $m=\alpha_r$, $n=\alpha_s$
%
(and hence with $T\in \mathbb{C}^{m\times m}$, $m=\min\{\alpha_r, \alpha_s\}$, a  real (complex-alternating) upper-triangular Toeplitz matrix for $\lambda>0$ ($\lambda=0$)).

\quad
Since $P=P^T$, $P^{-1}=\overline{P}$ we have $(P_{r}')^{2}=(\overline{P}_{_r}')^2=(P_{\alpha_r}')^{-2}=\big(\oplus_{j=1}^{m_r}(E_{\alpha_r})\big)$, hence $P^{2}=\overline{P}^2=P^{-2}=E$, where $E=\oplus_{r=1}^{N}\left(\oplus_{j=1}^{m_r} (E_{\alpha_r}) \right)$, and $S_{\epsilon}^T=S_{\epsilon}=\oplus_{r=1}^{N}\oplus_{j=1}^{m_r}\epsilon_{r,j} I_{\beta_j}=(S_{\epsilon}^2)^{-1}=S_{\epsilon}^2$.
It follows that $I=Q^TQ$ if and only if 
\begin{align}\label{QTQH}
I=   & (S_{\widetilde{\epsilon}}^TP^TY^T(P^{-1})^T(S_{\epsilon}^{-1})^T)(S_{\epsilon}^{-1}P^{-1}Y P S_{\widetilde{\epsilon}})\nonumber\\
 I  = & PS_{\widetilde{\epsilon}}(S_{\widetilde{\epsilon}}^T P^TY^T(P^{-1})^T(S_{\epsilon}^{-1})^T)(S_{\epsilon}^{-1}P^{-1}Y P S_{\widetilde{\epsilon}})S_{\widetilde{\epsilon}}^{-1}P^{-1},\nonumber\\
 I =   & P S_{\widetilde{\epsilon}}^2 P^TY^T P^{-1}S_{\epsilon}^{2}P^{-1}Y\\
 I = & S_{\widetilde{\epsilon}}^2\overline{P}^2Y^TP^{-2}S_{\epsilon}^{2}Y\nonumber\\
  S_{\widetilde{\epsilon}}^2 = & \big(P^2 Y P^{2}\big)^{T}S_{\epsilon}^{2}Y\nonumber\\
    S_{\widetilde{\epsilon}} = & E Y^{T} ES_{\epsilon}Y\nonumber
\end{align}

\quad
By $T$-conjugating $Y$ with the permutation matrix $\Omega=\oplus_{r=1}^{N}\Omega_r $ with $\Omega_r$ as in (\ref{perS}) we now rewritte this equation to the equation with block matrices such that their blocks are block Toeplitz matrices:
\begin{align}\label{ortoC3}
 \Omega^{T}S_{\widetilde{\epsilon}}\Omega = (\Omega^{T}E\Omega)(\Omega^{T} Y^{T} \Omega)(\Omega^{T}E\Omega)(\Omega^{T}S_{\epsilon}\Omega )(\Omega^{T}Y\Omega).
\end{align}

We have that $\mathcal{Y}=\Omega^{T}Y\Omega$ is of the form (\ref{0T0})
%
%
for $\mathcal{T}_{rs}=T_c(A_0^{rs},A_1^{rs},\ldots,A_{b_{rs}-1}^{rs})$, $A_j^{rs}\in \mathbb{R}^{m_r\times m_r}$ (or $A_j^{rs}\in \mathbb{C}^{m_r\times m_r}$), $b_{rs}=\min\{\alpha_r, \alpha_s\}$, a real (complex-alternating) upper-diagonal Toeplitz matrix if $\lambda>0$ (when $\lambda=0)$.
Denoting $S_{\epsilon,r}=\oplus_{j=1}^{\alpha_r}\epsilon_{r,j}I_{m_r}$, $S_{\widetilde{\epsilon},r}=\oplus_{j=1}^{\alpha_r}\widetilde{\epsilon}_{r,j}I_{m_r}$ 
we obtain
\begin{align*}
\Omega^{T}S_{\epsilon}\Omega=& (\oplus_{r=1}^{N}\Omega_r^{T})(\oplus_{r=1}^{N}\oplus_{j=1}^{m_r}\epsilon_{r,j}I_{\alpha_r})(\oplus_{r=1}^{N}\Omega_r)\\
                   = & \oplus_{r=1}^{N}\Omega_r^{T}(\oplus_{j=1}^{m_r}\epsilon_{r,j}I_{\alpha_r})\Omega_r
                   =\oplus_{r=1}^{N} \oplus_{j=1}^{\alpha_r}\epsilon_{r,j}I_{m_r}
                   =\oplus_{r=1}^{N}S_{\epsilon,r}\\
\Omega^{T}S_{\widetilde{\epsilon}}\Omega= & \oplus_{r=1}^{N}S_{\widetilde{\epsilon},r}.
\end{align*}
Finally,
setting $\mathcal{B}=\oplus_{j=1}^{N}S_{\epsilon,r}$, $\mathcal{B}'=\oplus_{j=1}^{N}S_{\widetilde{\epsilon},r}$, $\mathcal{F}=\Omega^{T} E\Omega =\oplus_{j=1}^{N}E_{\alpha_r}(I_{m_r})$, the equation (\ref{ortoC3}) in the new notation is
\[
\mathcal{B}' = \mathcal{F}\mathcal{Y}^{T}\mathcal{F}\mathcal{B}\mathcal{Y}.
\]
Applying Lemma \ref{EqT} (\ref{EqT1a}), (\ref{EqT2}) now proves (\ref{posl1}).

\quad
In particular, the solution exists precisely when $S_{\widetilde{\epsilon},r}$ and $S_{\epsilon,r}$ (hence $\oplus_{j=1}^{\alpha_r}\widetilde{\epsilon}_{r,j}$ and $\oplus_{j=1}^{\alpha_r}\epsilon_{r,j}$) have the same inertia.
It proves the last statement of the theorem (about uniqueness of the normal form $\mathcal{H}^{\epsilon}(A)$).

\item \label{caseK}

Let
\[
\mathcal{H}^{\epsilon}(A)
=\bigoplus_{r=1}^{N}\left( \bigoplus_{j=1}^{m_r} K_{\beta_r}(\mu)\right), \qquad \mu> 0,
\]
where $K_{\beta_r}(\mu)$, $r\in \{1,\ldots,N\}$ is as in (\ref{HKLmz}) for $z=\mu$, $m=\beta_r$.

\quad
By Lemma \ref{posl} the solution $Q$ of the equation $\mathcal{H}^{\epsilon}(A)\overline{Q}=Q\mathcal{H}^{\epsilon}(A)$ 
is of the form 
\[
Q=P^{-1}V^{-1}SYS^{-1}VP,
\]
where 
\begin{align*}
&P=\oplus_{r=1}^{N}P_{r}', \quad 
P_r'=\oplus_{j=1}^{m_r}e^{\frac{i\pi}{4}}(P_{\beta_k}\oplus P_{\beta_k}), \quad
P_{\beta_k}=\tfrac{e^{-i\frac{\pi}{4}}}{\sqrt{2}}(I_{\beta_k}+iE_{\beta_k}), \\
&V=\oplus_r^{N} V_r, \quad 
V_r=\oplus_{k=1}^{m_r}e^{i\frac{\pi}{4}}(W_{\beta_k}\oplus \overline{W_{\beta_k}}), \quad W_{\beta_k}= \oplus_{j=0}^{\beta_k-1} i^{j},\\ 
&S=\oplus_{r=1}^{N}S_r, \quad 
S_r=\oplus_{k=1}^{m_r}T_{\beta_k}, \quad
T_{\beta_k}=\begin{bsmallmatrix}
0 & U_{\beta_k}(i\mu) \\
J_{\beta_k}(-i\mu)\overline{U_{\beta_k}(i\mu) }& 0
\end{bsmallmatrix},
\end{align*}
with $U_{\beta_k}(i\mu)$ as a solution of the equation $U_{\beta_k}(i\mu)J_{\beta_k}(-\mu^{2})=(J_{\beta_k}(i\mu))^{2}U_{\beta_k}(i\mu)$, and $Y=[Y_{rs}]_{r,s=1}^{N}$ with $Y_{rs}$ a $m_r\times m_s$ block matrix whoose blocks of dimension $2\beta_r\times 2\beta_s$ are 
%
%
of the form (\ref{QTC})
for $m=\beta_r$ and 
$T_{1}$, $T_{2}$ 
of the form 
\begin{equation}
\left\{
\begin{array}{ll}
[0\quad T], & \beta_{r}<\beta_{s}\\
\begin{bmatrix}
T\\
0
\end{bmatrix}, & \beta_{r}>\beta_{s}\\
T,& \beta_{r}=\beta_{s}
\end{array}\right.,
\end{equation}
and $T\in \mathbb{C}^{m\times m}$, $m=\min\{\alpha_r, \alpha_s\}$ is any complex upper-diagonal Toeplitz matrix.

\quad
Recall that $P=P^T$, $P^{-1}=\overline{P}$. 
Since $(P_{r}')^{2}=-(\overline{P}_{_r}')^2=-(P_{\beta_r}')^{-2}=i\big(\oplus_{j=1}^{m_r}(E_{\beta_r}\oplus E_{\beta_r})\big)$, we have $P^{2}=-\overline{P}^2=-P^{-2}=iE$, where $E=\oplus_{r=1}^{N}\left(\oplus_{j=1}^{m_r} (E_{\beta_r}\oplus E_{\beta_r}) \right)$. 
Thus it follows that $I=Q^TQ$ if and only if 
\begin{align}\label{eqSTIS}
I=   & (P^{T}V^{T}S^{-T}Y^{T}S^{T}V^{-T}P^{-T})(P^{-1}V^{-1}S Y S^{-1}VP)\\
 I  = & S^{-1}VPP'^{T}V^{T}S^{-T}Y^{T}S^{T}V^{-T}P^{-2}V^{-1}S Y S^{-1}VPP^{-1}V^{-1}S\nonumber\\
 I =   & S^{-1}V(iE)V^{T}S^{-T}Y^{T}S^{T}V^{-T}(-iE)V^{-1}S Y\nonumber\\
 I = & S^{-1}VEV^{T}S^{-T}Y^{T}S^{T}V^{-T}EV^{-1}S Y\nonumber\\
 S^{T}V^{-T}EV^{-1}S = & Y^{T}S^{T}V^{-T}EV^{-1}S Y\nonumber\\
 ES^{T}V^{-T}EV^{-1}S = & (EY^{T}E)(ES^{T}V^{-T}EV^{-1}S) Y\nonumber\\
 \mathcal{I} = & (EY^{T}E)\mathcal{I} Y\nonumber,
\end{align}
where we denoted $\mathcal{I}=ES^{T}V^{-T}EV^{-1}S$.

\quad
Observe that
\[
V^{-T}EV^{-1}=\bigoplus_{k=1}^{N}\left( i^{\beta_k}\bigoplus_{j=1}^{m_k} \big( (-1)^{\beta_k}E_{\beta_k}\oplus -E_{\beta_k}\big)\right).
\]
Next, since $(J_{\beta_k}(-i\mu))^{T}E_{\beta_k}=E_{\beta_k}J_{\beta_k}(-i\mu)$, 
then using the calculation
\begin{align*}
(\overline{U_{\beta_k}(i\mu))}^{T}(J_{\beta_k}(-i\mu))^{T}E_{\beta_k}J_{\beta_k}(-i\mu)\overline{U_{\beta_k}(i\mu) }
&=
(\overline{U_{\beta_k}(i\mu))}^{T}E_{\beta_k}(J_{\beta_k}(-i\mu))^{2}\overline{U_{\beta_k}(i\mu) }=\\
&=
(\overline{U_{\beta_k}(i\mu))}^{T}E_{\beta_k}\overline{U_{\beta_k}(i\mu) }J_{\beta_k}(-\mu^{2}),
\end{align*}
we obtain that
\small
\begin{equation}\label{IU}
\mathcal{I}=\bigoplus_{k=1}^{N}\left( i^{\beta_k}\bigoplus_{j=1}^{m_k} 
\begin{bsmallmatrix}
-E_{\beta_k}(\overline{U_{\beta_k}(i\mu))}^{T}E_{\beta_k}\overline{U_{\beta_k}(i\mu) }J_{\beta_k}(-\mu^{2}) & 0 \\
0  & (-1)^{\beta_k}E_{\beta_k}(U_{\beta_k}(i\mu))^{T}E_{\beta_k}U_{\beta_k}(i\mu)
\end{bsmallmatrix}\right)
\end{equation}
\normalsize
is a quasi-diagonal matrix. Since 
\begin{align*}
 &E_{\beta_k}U_{\beta_k}(i\mu)^{T}E_{\beta_k}U_{\beta_k}(i\mu)J_{\beta_k}(-\mu^{2})=E_{\beta_k}U_{\beta_k}(i\mu)^{T}E_{\beta_k}(J_{\beta_k}(i\mu))^{2}U_{\beta_k}(i\mu)\\
=&E_{\beta_k}U_{\beta_k}(i\mu)^{T}((J_{\beta_k}(i\mu))^{2})^{T}E_{\beta_k}U_{\beta_k}(i\mu)=E_{\beta_k}(U_{\beta_k}(i\mu)J_{\beta_k}(-\mu^{2}))^{T}E_{\beta_k}U_{\beta_k}(i\mu)\\
=&J_{\beta_k}(-\mu^{2})E_{\beta_k}(U_{\beta_k}(i\mu))^{T}E_{\beta_k}U_{\beta_k}(i\mu),
\end{align*}
if follows that $E_{\beta_k}U_{\beta_k}(i\mu)^{T}E_{\beta_k}U_{\beta_k}(i\mu)$ are upper-triangular matrices. Furthermore, $U_{\beta_k}$ can be chosen so that
the odd rows have real entries and even rows have purely imaginary or zero entries, e.g. we take real eigenvector and then recursively solve equations $((J_{\beta_k}(i\mu))^{2}+\mu^{2})v_{n+1}=v_{n}$ with $v_{n}=E_{\beta_k}^{(n)}$, $n\in \{1,\ldots,\beta_k-1\}$. Then all the nonvanishing entries of $E_{\beta_k}U_{\beta_k}(i\mu)^{T}E_{\beta_k}U_{\beta_k}(i\mu)$ would be purely imaginary for $\beta_k$ even and real for $\beta_k$ odd.
Thus
\[
E_{\beta_k}U_{\beta_k}(i\mu)^{T}E_{\beta_k}U_{\beta_k}(i\mu) = i^{\beta_k+1}T(u_0^{k},u_1^{k},\ldots,u_{\beta_k-1}^{k}),\qquad u_0^{k},u_1^{k},\ldots,u_{\beta_k-1}^{k}\in \mathbb{R},
\]
Finally, from (\ref{IU}) we deduce
\begin{equation*}
\mathcal{I}=i\bigoplus_{k=1}^{N}\left(\bigoplus_{j=1}^{m_k} 
\begin{bsmallmatrix}
T(u_0^{k},\ldots,u_{\beta_k-1}^{k})J_{\beta_k}(-\mu^{2}) & 0 \\
0  & T(u_0^{k},\ldots,u_{\beta_k-1}^{k})
\end{bsmallmatrix}\right), \qquad  u_0^{k},\ldots,u_{\beta_k-1}^{k}\in \mathbb{R}.
\end{equation*}

\quad
Proceed by $T$-conjugating $Y$ with a suitable permutation matrix to get a block matrix such that its blocks are block Toeplitz matrices.
Let $e_1,e_2,\ldots,e_{\alpha_r m_r}$ be the standard orthonormal basis in $\mathbb{C}^{rm_r}$.
We set a matrix formed by these vectors: 
\begin{equation}\label{perK}
\Omega=\oplus_{r=1}^{N}\Omega_r,
\end{equation}
\small
\begin{align*}
\Omega_r=[
& e_1\;e_{2\beta_r+1}\;\ldots\;e_{2(m_r-1)\beta_r+1}\;
e_{\beta_r+1}\;e_{3\beta_r+1}\;\ldots\;e_{(2m_r-1)\beta_r+1}\;
e_2\;e_{2\beta_r+2}\;\ldots\;e_{2(m_r-1)\beta_r+2}\;\ldots\;\\
& \qquad e_{\beta_r+2}\;e_{3\beta_r+2}\;\ldots\;e_{(2m_r-1)\beta_r+2}\;\ldots\;
e_{\beta_r}\;e_{3\beta_r}\;\ldots\;e_{\beta_r (2m_r-1)}\;
e_{2\beta_r}\;e_{4\beta_r}\;\ldots\;e_{\beta_r (2m_r)}
]
\end{align*}
\normalsize
Observe that multiplicating with $\Omega_r$ from the right puts the first, $(2\alpha_r+1)$-th,\ldots, $((2m_r-1)\alpha_r+1)$-th column together, further 
$e_{\alpha_r+1}$-th,\ldots $e_{(2m_r-1)\alpha_r+1}$-th column together, then
the second,\ldots,$(2(m_r-1)\alpha_r+2)$-th column together, and soforth. Similarly, by multiplicating with $\Omega_r^{T}$ from the left we collect the rows together.

\quad
From (\ref{eqSTIS}) it follows that
\begin{align}\label{ortoD2}
 \Omega^{T} \mathcal{I}\Omega = & (\Omega^{T}E\Omega)(\Omega^{T} Y\Omega)^{T}(\Omega^{T}E\Omega)(\Omega^{T}\mathcal{I} \Omega)(\Omega^{T} Y\Omega). 
\end{align}
%
%
%
%
%
Set $\mathcal{Y}=\Omega^{T} Y\Omega$ and observe that its blocks $\mathcal{Y}_{rs}$ are of the form (\ref{0T0}) with 
%
$\mathcal{T}_{rs}=T(A_0^{rs},A_1^{rs},\ldots,A_{b_{rs}-1}^{rs})$, $b_{rs}=\min\{\beta_r, \beta_s\}$, where further
\begin{align*}
A_0^{rs}=
\begin{bmatrix}
V_0^{rs} &  W_0^{rs} \\
-\mu^{2}\overline{W}_0^{rs} &  \overline{V}_0^{rs} 
\end{bmatrix}, \qquad
&A_n^{rs}=
\begin{bmatrix}
V_n^{rs} &  W_n^{rs} \\
-\mu^{2}\overline{V}_n^{rs}+\overline{V}_{n-1}^{rs} &  \overline{W}_n^{rs} 
\end{bmatrix}, \quad n\in \{1,\ldots,b_{rs}-1\}\nonumber
\end{align*}
with $V_0^{rs},W_0^{rs},\ldots,V_{b_{rs}-1}^{rs},W_{b_{rs}-1}^{rs}\in \mathbb{C}^{m_r\times m_r}.$ 
We denote $\mathcal{F}=\Omega^{T} E\Omega$ and 
\begin{align*}
\mathcal{B}=-i\Omega^{T}\mathcal{I}\Omega=\bigoplus_{r=1}^{N}T\big(B_0^{r},B_1^{r},\ldots,B_{\alpha_r-1}^{r}\big), 
%
\end{align*}
with $B_0^{r},\ldots,B_{b_{rs}-1}^{r}$, $r\in \{1,\ldots,N\}$ as in (\ref{aass}).
By applying Lemma \ref{EqT} (\ref{EqT1b}) to equation $\mathcal{B}=\mathcal{F}\mathcal{Y}^{T}\mathcal{F}\mathcal{B}\mathcal{Y}$ (obtained from (\ref{ortoD2})) we conclude the proof of (\ref{stabz2}).

\item \label{caseL} 

Let
\[
\mathcal{H}^{\epsilon}(A)
=\bigoplus_{r=1}^{N}\left( \bigoplus_{j=1}^{m_r}  L_{\alpha_r}(\xi)\right), \qquad \xi^{2}\in \mathbb{C}\setminus \mathbb{R},
\]
where $H_{\alpha_r}(\xi)$, $r\in \{1,\ldots,N\}$ is as in (\ref{HKLmz}) for $z=\xi$, $m=\alpha_r$.

\quad
By Lemma \ref{posl} the solution $Q$ of the equation $\mathcal{H}^{\epsilon}(A)\overline{Q}=Q\mathcal{H}^{\epsilon}(A)$ 
is of the form 
\[
Q=P^{-1}YP,\qquad
P=\oplus_{r=1}^{N}P_{r}', \quad 
P_r'=\oplus_{j=1}^{m_r}P_{\beta_k}\oplus P_{\beta_k}, \quad
P_{\beta_k}=\tfrac{e^{-i\frac{\pi}{4}}}{\sqrt{2}}(I_{\beta_k}+iE_{\beta_k}),
\]
where $Y=[Y_{rs}]_{r,s=1}^{N}$ and further $Y_{rs}$ is a $m_r\times m_s$ block matrix whoose blocks of dimension $2\beta_r\times 2\beta_s$ are of the form 
%
%
%
(\ref{QTC})
for $m=\beta_r$, $n=\beta_s$, thus 
of the form 
\begin{equation}
\left\{
\begin{array}{ll}
[0\quad T], & \beta_{r}<\beta_{s}\\
\begin{bmatrix}
T\\
0
\end{bmatrix}, & \beta_{r}>\beta_{s}\\
T,& \beta_{r}=\beta_{s}
\end{array}\right., \qquad T=T_1\oplus \overline{T}_1
\end{equation}
for a complex upper-triangular Toeplitz matrix $T_1\in \mathbb{C}^{p\times p}$, $p=\min\{\beta_r, \beta_s\}$.

\quad 
Similarly, as (\ref{QTQH}), we now deduce that $I=Q^TQ$ if and only if 
\begin{align*}
I=   & P^TY^T(P^{-1})^TP^{-1}Y P \\
    I = & E Y^{T} EY,
\end{align*}
where $E=\oplus_{r=1}^{N}\left(\oplus_{j=1}^{m_r} (E_{\beta_r}\oplus E_{\beta_r}) \right)$. (We have $P^{2}=\overline{P}^2=P^{-2}=E$.)

\quad
Using a permutation matrix $\Omega$ from (\ref{perK}) we write
\begin{align*}
 I = & (\Omega^{T}E\Omega)(\Omega^{T} Y\Omega)^{T}(\Omega^{T}E\Omega)(\Omega^{T} Y\Omega). 
\end{align*}
We have $\mathcal{Y}=(\mathcal{Y}_{rs})_{rs}=\Omega^{T} Y\Omega$ with $\mathcal{Y}_{rs}$ of the form (\ref{0T0}) for   
%
%
$\mathcal{T}_{rs}=T(A_0^{rs},\ldots,A_{b_{rs}-1}^{rs})$, $b_{rs}=\min\{\beta_r, \beta_s\}$, where $A_n^{rs}=V_n^{rs}\oplus \overline{V}_n^{rs}$, $V_n^{rs}\in \mathbb{C}^{m_r\times m_r}$ for all $n\in \{0,\ldots,b_{rs}-1\}$.

\quad
Taking the submatrix $\mathcal{V}$ (or $\overline{\mathcal{V}}$) formed by entries contained in rows and columns (of $\mathcal{Y}$) which contain any $V_n^{rs}$ (or $\overline{V}_n^{rs}$) yields the equation $I=\widetilde{\mathcal{V}}\mathcal{V}$ ($I=(\overline{\widetilde{\mathcal{V}}}\overline{\mathcal{V}}$) with 
$(\mathcal{V})_{rs}=T(V_0^{rs},\ldots,V_{b_{rs}-1}^{rs})$. 
It is clear that $\widetilde{\mathcal{Y}}\mathcal{Y}=I$ holds precisely when 
$\widetilde{\mathcal{V}}\mathcal{V}=I$ holds.
We solve the later equation by applying Lemma \ref{EqT} (\ref{EqT1}).
\end{enumerate}

This concludes the proof of the theorem.
\end{proof}

\begin{proof}[Proof of Theorem \ref{posapp}]
Let $GL_n(\mathbb{C})$ denote the group of all non-singular $n\times n$ matrices. Any holomorphic change of coordinates that
preserves the general form of (\ref{BasForm1}) has the same effect on
the quadratic part of (\ref{BasForm1}) as a complex-linear change of the form
\[
\begin{bmatrix}
z \\
w
\end{bmatrix}
=
\begin{bmatrix}
P & b \\
0 & c
\end{bmatrix}
\begin{bmatrix}
\widetilde{z} \\
\widetilde{w}
\end{bmatrix}, \quad P\in GL_n(\mathbb{C}),\, b\in \C^n, \,c\in \mathbb{C}\setminus \{0\}
.\]
Using this linear changes of coordinates, the form (\ref{BasForm1})
transforms into
%
%
%
%
%
\begin{equation}\label{BasForm2}
\widetilde{w}=\overline{\widetilde{z}}^T\left(\frac{1}{c}P^{*}AP\right) \widetilde{z}+\Rea \left(\widetilde{z}^T \left(\frac{1}{\overline{c}} P^TBP\right) \widetilde{z}\right)+o(|\widetilde{z}|^2).
\end{equation}
It is clear that by scaling $P$, we can assume
$|c|=1$ and eliminate $c$ in the second term of (\ref{BasForm2}).
Since $P^{*}AP$ is Hermitian if and only if $A$ is
Hermitian, we have $c\in\{1,-1\}$. Next, the Autonne-Takagi theorem yields that a symmetric nonsingular matrix is $T$-congruent to the identity matrix, so we may assume $B=I$. To preserve $B=I$ the matrix $P$ must be orthogonal.

Since $-K_m(z)$ and $-L_m(z)$ are orthogonally $*$-congruent to $K_m(z)$ and $L_m(z)$, respectively (see e.g. \cite[Lemma 2.6]{Hong89}), matrices $-\mathcal{H}^{\epsilon}(A)$ and $\mathcal{H}^{-\epsilon}(A)$ are orthogonally $*$-congruent, too. Further, recall that $Q^*H_{2n-1}(0)Q=-H_{2n-1}(0)$, $n\in \mathbb{N}$, where $Q=-1\oplus 1\oplus -1 \oplus \ldots  \oplus 1\oplus  -1$. To conclude the proof we now use Theorem \ref{stabz}.
(The last statement of the corollary follows trivially.)
\end{proof}

\section{Open questions}

We point out a few interesting questions whoose answers would give a further understanding of the stratfication of certan classes of matrices with respect to some important matrix group actions.
We begin with a couple of questions mentioned in the first section. We plan to answer the first one in our future researh. 

\begin{question}
What are the dimensions of orbits of the actions of orthogonal similarity on skew-symmetric and on orthogonal matrices?
\end{question}

\begin{question} 
What is the dimension of the subset of orthogonal matrices of the form $\begin{bsmallmatrix}
A & B\\
\overline{B} & \overline{A}
\end{bsmallmatrix}$, $A,B\in \mathbb{C}^{n\times n}$?
\end{question}

This is the only (technical) problem left open concerning the dimension of the stabilizer of $*$-conjugation on Hermitian matrices (see the Remark), though \ref{rmz} Theorem \ref{stabz} is a strong result already in the present form.

The next step would be to determine the inclusion relationship between the closures of the orbits. This has been so far inspected for actions of the group of all invertible matrices (not neccessarily orthogonal) on all matrices. 
The case of similarity was first studied by Arnold (see e.g. \cite{Arnold}) and then through the works of Markus and Parilis \cite{MarkusParilis} and Edelman, Elmroth and K\aa gstrom \cite{EEK}, among others.
Next, $*$-conjugation or $T$-conjugation seem to be much more involved, even in lower dimensions; see the papers  Futorny, Klimenko and Sergeichuk \cite{FKS1} and Dmytryshyn, Futorny, K\aa gstr\"{o}m, Klimenko and Sergeichuk \cite{FKS2}).

\begin{question}
What is the relationship beetwen the closures of orbits with respect to actions of orthogonal similarity (*-conjugacy) on symmetric, skew-symmetric  or orthogonal (Hermitian, skew-Hermitian) matrices.
\end{question}

The last question is naturally related to the topic.

\begin{question}
What are the dimensions of orbits and the relationship beetwen them under the action of unitary similarity ($*$-conjugacy ot $T$-conjugacy) on symmetric, skew-symmetric, Hermitian, skew-Hermitian or unitary matrices. 
\end{question}

%
%
%
%

%
%
%
%




\end{document}